\newtheorem*{thm*}{Theorem}
\newtheorem{theorem}{Theorem}[section]
\newtheorem{corollary}[theorem]{Corollary}
\newtheorem{lemma}[theorem]{Lemma}
\newtheorem*{fact*}{Fact}
\newtheorem{proposition}[theorem]{Proposition}
\newcounter{theoremalph}
\theoremstyle{definition}
\newtheorem{question}[theorem]{Question}
\newtheorem{definition}[theorem]{Definition}
\theoremstyle{remark}
\newtheorem{remark}[theorem]{Remark}
\newtheorem{example}[theorem]{Example}
\newcommand{\ls}{\left\{}
\newcommand{\rs}{\right\}}
\newcommand{\sm}{-}
\newcommand{\Z}{\ensuremath{\mathbb{Z}}}
\newcommand{\mcB}{\ensuremath{\mathcal{B}}}
\newcommand{\mbN}{\ensuremath{\mathbb{N}}}
\newcommand{\Star}{\ensuremath{\st}}
\newcommand{\Link}{\ensuremath{\operatorname{lk}}}
\newcommand{\Cone}{\ensuremath{\operatorname{Cone}}}
 \newcommand\gen[1]{\left\langle#1\right\rangle}
 \newcommand\Span[1]{\left\langle#1\right\rangle}
\newcommand{\Aut}{\ensuremath{\operatorname{Aut}}}
\newcommand{\Diff}{\ensuremath{\operatorname{Diff}}}
\newcommand{\Out}{\ensuremath{\operatorname{Out}}}
\newcommand{\St}{\operatorname{St}}
\newcommand{\gtl}{\mathfrak{grt}}
\newcommand{\op}{\operatorname{op}}
\newcommand{\st}{\operatorname{st}}
\newcommand{\sd}{\operatorname{sd}}
\newcommand{\lk}{\operatorname{lk}}
\newcommand{\im}{\operatorname{im}}
\newcommand{\rk}{\operatorname{rk}}
\newcommand{\GL}[2]{\ensuremath{\operatorname{GL}_{#1}(#2)}}
\newcommand{\SL}[2]{\ensuremath{\operatorname{SL}_{#1}(#2)}}
\newcommand{\Decomp}{\operatorname{D}}
\newcommand{\PD}{\operatorname{PD}}
\newcommand{\CB}{\operatorname{CB}}
\newcommand{\FC}{\operatorname{FC}}
\newcommand{\Kfour}{\operatorname{FCD}}
\newcommand{\factorposet}{\mathcal{P}}
\newcommand{\spine}{K_n}
\newcommand{\geomPD}{\mathrm{FCD}^{\mathrm{g}}}
\newcommand{\geomD}{\mathrm{D}^{\mathrm{g}}}
\newcommand{\sphere}{\mathcal{S}}
\newcommand{\boundcomp}{M_0}
\newcommand{\decompMap}{\mathbf{d}}
\newcommand{\MCG}{\operatorname{MCG}}
\newcommand{\thedegree}{l} %d was already a decomposition...
\newcommand{\themaxdegree}{L}
\newcommand{\Ftau}{F_{\mathrm{cut}}} % Tau part of the fiber under alpha
\newcommand{\Fsigma}{\sphere_{A}} % Sigma part of the fiber under alpha
\newcommand{\Fdecomp}{P_1} % Decomposition part of the fiber under varphi
\newcommand{\Fpartdecomp}{P_2} % Strictly partial decomp part of the fiber under varphi
\newcommand{\tq}{\mathrel{{\ensuremath{\: : \: }}}}
\newcommand{\m}{\to}
\title{An analogue of Rognes' connectivity conjecture for free groups}
\author{Benjamin Br{\"u}ck}
\address{Universit{\"a}t M{\"u}nster \\
Institut f{\"u}r Mathematische Logik und Grundlagenforschung \\
48149 M{\"u}nster, Germany}
\email{benjamin.brueck@uni-muenster.de}
\author{Jeremy Miller}
\address{Purdue University \\
Department of Mathematics\\
47907 West Lafayette, United States}
\email{jeremykmiller@purdue.edu}
\author{Kevin I. Piterman}
\address{Vrije Universiteit Brussel \\
Department of Mathematics and Data Science \\
1050 Brussels, Belgium}
\email{kevin.piterman@vub.be}
\keywords{Free groups; Sphere systems; Common basis complex}
\subjclass[2020]{20F65, 20E05, 55P48, 57M07}
\begin{document}

\begin{abstract}
We show that the common basis complex of a free group of rank $n$ has the homotopy type of a wedge of spheres of dimension $2n-3$. This establishes an $\Aut(F_n)$-analogue of the connectivity conjecture that Rognes originally stated for $\GL{n}{R}$.
To prove this, we provide several homotopy-equivalent models of the common basis complex, both in terms of free factors in free groups and in terms of sphere systems in 3-manifolds. 
\end{abstract}

\maketitle

\vspace{-.3in}

\section{Introduction}
\subsection{Rognes' connectivity conjecture and the main theorem}

For $R$ a sufficiently nice ring (e.g. commutative), Rognes \cite{ROGNES1992813} defined a rank filtration of the free algebraic $K$-theory spectrum of $R$. He showed that the associated graded is given by the general linear group homotopy orbits of the suspension spectrum of $\Sigma \CB(R^n)$, with $\CB(R^n)$ the common basis complex of $R$. This simplicial complex has vertices the proper nonzero direct summands of $R^n$ and a collection of such summands forms a simplex if there is a \emph{common basis}, i.e., a basis of $R^n$ such that each summand is spanned by some subset of the basis. Motivated by potential applications to computing $K_*(\Z)$, Rognes conjectured that $\CB(R^n) \simeq \bigvee S^{2n-3}$ if $R$ is Euclidean or local. This conjecture is known to be true for $R$ a field by work of the second author with Patzt and Wilson \cite{miller2023}; for finite fields, it also follows by combining work of the first author, third author, and Welker \cite{Brueck2024} with work of Hanlon--Hersh--Shareshian \cite{HHS}. The general case is still open, even for $R=\Z$.

In this paper, we prove an analogue of Rognes' connectivity conjecture for free groups. 
That is, we consider a version of the complex $\CB(R^n)$ for free groups and show that it is highly connected: A subgroup $H \subseteq F_n$ of the free group $F_n$ of rank $n$ is a \emph{free factor} if there is a subgroup $K \subseteq F_n$ with internal free product $H * K =F_n$. While already $F_2$ contains free subgroups of arbitrary rank, all free factors of $F_n$ have rank at most $n$. In this sense, free factors of $F_n$ can be seen as analogues of direct summands of $R^n$. The common basis complex for $F_n$, denoted by $\CB(F_n)$, is the simplicial complex whose vertices are non-trivial proper free factors of $F_n$ and a collection of such factors forms a simplex if there is a common basis in the sense of free groups (see \cref{sec:algebraic_complexes}). Our main theorem is the following.

\begin{theorem} \label{thm:mainCB}
For all $n \geq 1$, $\CB(F_n) \simeq \bigvee S^{2n-3}$.
\end{theorem}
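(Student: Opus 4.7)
The plan is to prove the theorem by transferring the problem from the algebraic complex $\CB(F_n)$ to a geometric model built from sphere systems in the 3-manifold $M_n = \#_n(S^1\times S^2)$, where surgery and Morse-theoretic tools permit a direct computation of the homotopy type. The structure of such a proof is forced by the dimension $2n-3$, which matches the virtual cohomological dimension of $\Out(F_n)$ and is far smaller than the dimension of a maximum simplex of $\CB(F_n)$ (which is exponential in $n$); this strongly suggests that $\CB(F_n)$ collapses dramatically onto a model of Outer-space type.

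First, I would introduce auxiliary algebraic complexes: a \emph{decomposition complex} $\Decomp$ whose simplices are iterated free-product decompositions $F_n = H_0 * H_1 * \cdots * H_k$ with each $H_i$ non-trivial, together with a \emph{free-cut} complex $\FC$ controlling how individual free factors sit inside such decompositions. Using Quillen's Theorem A on the natural poset maps between these complexes and $\CB(F_n)$, one hopes to establish a zig-zag of homotopy equivalences, where the key point is that the common-basis condition on a collection of free factors is exactly what makes the fibers of the relevant poset maps contractible or gives them an explicit join structure. Concretely, this should realize $\CB(F_n)$ as something of the form $\FC \joina \Decomp$ (the notation $\Kfive$ in the paper).

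Second, via the correspondence due to Laudenbach, iterated free-product decompositions of $F_n = \pi_1(M_n)$ correspond to isotopy classes of embedded sphere systems in $M_n$ whose complementary components carry the factors, and free factors themselves correspond to partial sphere systems. This yields geometric avatars $\geomD$ and $\geomPD$ of the algebraic complexes above. The advantage is that these geometric models admit tools that do not apply directly to free factors: intersections between spheres can be resolved by Hatcher's surgery procedure, and one can filter the complex by geometric complexity (e.g.\ number of transverse intersection curves) and apply a discrete Morse or nerve-type argument to produce an explicit matching.

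Finally, I would compute the homotopy type of the geometric model, either by an inductive Morse-theoretic argument in the spirit of Hatcher's work on sphere complexes, or by relating the model to the spine $\spine$ of reduced Outer space (whose dimension $2n-3$ matches the claim) or to a suitable deformation retract thereof. The main technical obstacle I expect is the verification of the algebraic-to-geometric dictionary: one must check carefully that ``common basis'' for free factors translates precisely to geometric compatibility of sphere systems, and that the Quillen fiber arguments in the first step really do have contractible fibers, since the naive fibers are themselves complexes of the same flavor. Once this dictionary is in place, the wedge-of-spheres conclusion of the correct dimension should follow from shellability or Cohen--Macaulayness arguments applied to the geometric complex, or from an inductive cofiber sequence argument analogous to the one used in the field case.
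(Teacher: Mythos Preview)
Your broad architecture---pass from $\CB(F_n)$ through auxiliary algebraic posets via Quillen's Theorem A to a sphere-system model, then prove high connectivity geometrically---is exactly what the paper does. But two points in your plan are not merely vague; they would actually derail the argument.

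First, you work in the closed manifold $M_n = \#_n(S^1\times S^2)$. The paper works in $M_{n,1}$, the same manifold with one open ball removed. This is not cosmetic: the boundary component gives a distinguished basepoint vertex $M_0(\sigma)$ in every dual graph, and the entire connectivity argument hinges on a \emph{degree} function measuring complexity away from this basepoint. Without it there is no natural degree, and indeed the $\Out(F_n)$ analogue built from $M_{n,0}$ (equivalently, the conjugacy-class version $\PD^{conj}(F_n)$) is explicitly left open in the paper---only an $(n-2)$-connectivity bound is known there. So your choice of manifold lands you on the unsolved side of the problem. Relatedly, the correct comparison is to the spine of \emph{Auter} space, not Outer space.

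Second, your proposed endgame---shellability, Cohen--Macaulayness, surgery-based Morse theory, or a cofiber sequence modeled on the field case---does not match what actually works, and I do not see how any of them would yield the slope-$2$ connectivity range $2n-4$. The paper instead uses the degree filtration of the full sphere complex $\sphere(M_{n,1})$: the subcomplex $\sphere^{\leq l}$ of systems of degree at most $l$ is $(n+l-2)$-connected by a prior ``degree theorem'' of Aygun and the second author. One checks that $\sphere^{\leq n-2}$ sits inside the geometric model $\geomPD$, and then a bad-simplex argument pushes any sphere $S^l\to\geomPD$ with $l\le 2n-4$ down into $\sphere^{\leq n-2}$. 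This degree-theorem mechanism is the genuinely new input and is what your sketch is missing.
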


The most direct outcome of \cref{thm:mainCB} and its proof is that we provide several highly-connected $\Aut(F_n)$-complexes (see \cref{algebraicModels} and \cref{sec:intro_geometry} for alternative descriptions of $\CB(F_n)$). High connectivity results for simplicial complexes with group actions are useful for studying homological and geometric properties of groups. In particular, families of highly-connected simplicial complexes are important in the study of homology stability. There are a plethora of families of simplicial complexes with slope-$1$ connectivity ranges (see Randal-Williams--Wahl \cite{RWW} and the references therein). In contrast, simplicial complexes with slope-$2$ connectivity ranges like the one we obtain in \cref{thm:mainCB} are much rarer (see e.g.~\cite{HarerVCD,HHS,GKRW4,miller2023}). These better connectivity ranges sometimes imply patterns for homology outside of the stable range (see Galatius--Kupers--Randal-Williams \cite[Theorem E]{GKRW4}).

Maybe even more importantly, our results give new insights into two related open questions. The first is whether $\CB(\Z^n)$ is homotopy equivalent to a wedge of $(2n-3)$-spheres, and the second is what the homotopy type of the simplicial boundary of Culler--Vogtmann's Outer space is.

The first of these questions is Rognes' connectivity conjecture for $R=\Z$.
It has deep connections to the cohomology of arithmetic groups and algebraic $K$-theory. For example, it implies Church--Farb--Putman's vanishing conjecture for $\SL{n}{\Z}$ \cite[Conjecture 2]{CFPconj}; see Charlton--Radchenko--Rudenko \cite[Section 1.3]{SteinbergPoly}. It also implies that $K_{12}(\Z) \cong 0$; see Dutour--Sikiri\'c--Elbaz-Vincent--Kupers--Martinet \cite[Remark 6.6]{K8}.
Solving this conjecture seems to be a very hard problem. Previous attempts focused on algebraic techniques \cite{ROGNES1992813,GKRW4,HHS,miller2023}. In contrast to that, our methods suggest that a more ``geometric'' approach that studies related simplicial complexes could be more tractable.
A different approach using the theory of buildings was taken in \cite{PitermanShareshianWelker}.

The second question about the homotopy type of the simplicial boundary of Outer space was raised by Br\"uck--Gupta \cite[Question 1.1]{BG:Homotopytypecomplex} and by Vogtmann \cite[second question in Section 3.2]{Vogtmann2024}. It is motivated by questions related to the dualizing module of $\Out(F_n)$, especially in comparison to the related settings of arithmetic groups and mapping class groups \cite{BF:topologyinfinity,BSV:bordificationouterspace,Vogtmann2024, Wade2024a}.
To prove \cref{thm:mainCB}, we show that certain subcomplexes of Auter space, the moduli space of metric graphs with a choice of basepoint \cite[Section 2]{HatcherVogtmannCerf}, are highly connected (see \cref{sec:intro_geometry}). These results give an indication as to what the homotopy type of their $\Out(F_n)$-version could be.

In the following three subsections, we describe our approach for proving \cref{thm:mainCB}. We return to these analogies and open questions in \cref{sec:analogies}.

\subsection{Alternative algebraic models of \texorpdfstring{$\CB(F_n)$}{CB(Fn)}} \label{algebraicModels}

The simplicial complex $\CB(F_n)$ has dimension $2^n -3$. Nevertheless, \cref{thm:mainCB} implies that $\CB(F_n)$ is homotopy equivalent to a $(2n-3)$-dimensional $CW$-complex. In fact, this was previously known by work of the first author, third author, and Welker \cite{Brueck2024}. They showed that the common basis complexes of both rings and free groups have a description in terms of the \emph{poset of partial decompositions}. A non-empty collection of proper non-trivial free factors $\{N_1,\ldots,N_k\}$ in $F_n$ is a partial decomposition if $F_n = N_1\ast \cdots \ast N_k \ast K$ for some $K$. Refinement gives a natural partial ordering on the set of partial decompositions, see \cref{sec:algebraic_complexes}. We denote this poset by $\PD(F_n)$. It has dimension $2n-3$ and \cite[Corollary 1.1]{Brueck2024} implies that $\PD(F_n)\simeq \CB(F_n)$, so \cref{thm:mainCB} is equivalent to showing $\PD(F_n)$ is $(2n-4)$-connected.

We will prove that $\PD(F_n)$ is $(2n-4)$-connected by first comparing it to another complex of free factors. Let $\Decomp(F_n) \subset \PD(F_n)$ be the subposet of \emph{full decompositions}, the partial decompositions $\{N_1,\ldots,N_k\}$ that generate $F_n = N_1\ast \cdots \ast N_k$.
Let $\FC_n$ be the poset of non-trivial proper free factors of $F_n$, with order given by inclusion.
The order complex of $\FC_n$ is the \emph{free factor complex} of Hatcher--Vogtmann \cite{HV2022freefactors}, who also denote it by $\FC_n$.
We combine $\Decomp(F_n)$ and  $\FC_n$ to form a new poset called $\Kfour_n$ which can be seen as the subcomplex of the join $\Decomp(F_n)\ast\FC_n$ consisting of ``basis compatible pairs'' (for the precise definition, see \cref{eq:defKfour}).  A key step in our proof is to show that $\Kfour_n$ is homotopy equivalent to $\PD(F_n)$ and hence to $\CB(F_n)$.

\subsection{Geometric models of \texorpdfstring{$\CB(F_n)$}{CB(Fn)} and sphere systems}
\label{sec:intro_geometry}

Our proof that $\CB(F_n)$ is $(2n-4)$-connected is inspired by a proof of Hatcher--Vogtmann that shows that $\Decomp(F_n)$ is $(n-3)$-connected \cite[Theorem 6.1]{HatcherVogtmannCerf}. To obtain their result, they first give a ``geometric'' model $\geomD_n\simeq \Decomp(F_n)$ defined in terms of sphere systems in $3$-manifolds. 

Hatcher \cite[Appendix]{Hat:Homologicalstabilityautomorphism} described how to view Auter space and its simplicial closure as spaces of $2$-spheres in $3$-manifolds. Let $M_{n,m}$ be the connected sum of $n$ copies of $S^1 \times S^2$ with $m$ open balls removed. 
Let $\sphere(M_{n,m})$ denote the simplicial complex whose simplices are  \emph{sphere systems} in $M_{n,m}$, i.e.,~sets $\{[S_1],\ldots,[S_r]\}$ of isotopy classes of disjointly embedded $2$-spheres $S_i\hookrightarrow M_{n,m}$ that are neither isotopic to a boundary component nor bound a disk.
Hatcher proved that $\sphere(M_{n,0})$ is the simplicial closure of Outer space and $\sphere(M_{n,1})$ is the simplicial closure of Auter space. 
The \emph{spine} of Auter space is the subposet $\spine \subset \sphere(M_{n,1})$ of sphere systems $\ls [S_1], \ldots, [S_r]\rs$ such that all path components of $M_{n,1} \setminus(S_1 \cup \ldots \cup  S_r)$ are simply connected.\footnote{Note that $K_n$ is not a subcomplex of $\sphere(M_{n,1})$ (the face of a simplex in $K_n$ need not be in $K_n$ anymore), which is why we only talk about subposets here. However, $K_n$ can be interpreted as a subcomplex of the barycentric subdivision of $\sphere(M_{n,1})$.}
Every sphere system $\sigma=\{[S_1],\ldots,[S_r]\}$ has an associated \emph{dual graph} $\Gamma(\sigma)$. The vertices of $\Gamma(\sigma)$ are the path components of $M_{n,m} \setminus (S_1 \cup \ldots \cup  S_r)$. There is an edge for each $S_i$ connected to the (possibly not distinct) vertices corresponding to the two components on either side of the sphere. For sphere systems in $M_{n,1}$, there is a unique connected component $\boundcomp(\sigma)$ of $M_{n,1} \setminus (S_1 \cup \ldots \cup  S_r)$ that contains the boundary of $M_{n,1}$. This component corresponds to a vertex of $\Gamma(\sigma)$.

Hatcher--Vogtmann defined $\geomD_n\subset \spine$ as the subposet of sphere systems $\sigma$ where $|\Gamma(\sigma)| \setminus \{ \boundcomp(\sigma) \}$ is disconnected ($\boundcomp(\sigma)$ is a \emph{cut vertex}). A key step in their proof that $\Decomp(F_n)$ is spherical is to show that $\geomD_n \simeq \Decomp(F_n)$.

Our sphere-system model of $\Kfour_n \simeq\CB(F_n)$ is a subposet $\geomPD_n$ of $\sphere(M_{n,1})$ such that its intersection with the spine $\spine$ is equal to Hatcher--Vogtmann's $\geomD_n$. Namely, we let $\geomPD_n \subset \sphere(M_{n,1})$ denote the subposet of sphere systems $\sigma$ such that either $\boundcomp(\sigma)$ is a cut vertex of $\Gamma(\sigma)$ or $\boundcomp(\sigma)$ has non-trivial fundamental group. We prove the following.

\begin{theorem}
\label{DsimCB}
There is a homotopy equivalence $\geomPD_n \simeq \CB(F_n)$.
\end{theorem}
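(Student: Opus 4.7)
The plan is to use the already-established homotopy equivalence $\CB(F_n) \simeq \PD(F_n)$ from \cite{Brueck2024} and thereby reduce the theorem to showing $\geomPD_n \simeq \PD(F_n)$. To this end I would construct a natural order-preserving map
\[
\Psi \colon \geomPD_n \longrightarrow \PD(F_n)
\]
sending a sphere system $\sigma \in \geomPD_n$ to the partial decomposition determined by the fundamental groups of the components of $M_{n,1} \setminus \sigma$ other than $\boundcomp(\sigma)$. Standard Bass--Serre considerations applied to the dual graph $\Gamma(\sigma)$ identify $F_n$ as a free product of these subgroups together with a complementary free factor $K$ capturing $\pi_1(\boundcomp(\sigma))$ and the loops of $\Gamma(\sigma)$. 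The two clauses defining $\geomPD_n$ -- either $\boundcomp(\sigma)$ is a cut vertex of $\Gamma(\sigma)$, or $\pi_1(\boundcomp(\sigma))$ is non-trivial -- are exactly what guarantees that $K$ is non-trivial, so that $\Psi(\sigma)$ is a genuine element of $\PD(F_n)$. Refinement of sphere systems corresponds to refinement of partial decompositions, so $\Psi$ is a poset map.

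To prove that $\Psi$ is a homotopy equivalence, I would apply Quillen's Theorem~A for posets. For each $P = \{H_1,\ldots,H_k\} \in \PD(F_n)$, one has to verify that the fiber $\Psi^{-1}(\PD(F_n)_{\leq P})$ is contractible. Geometrically, such a fiber parametrizes sphere systems whose associated algebraic decomposition refines $P$. Fixing a geometric realization of $P$ by disjointly embedded spheres in $M_{n,1}$ with complementary pieces $C_0,C_1,\ldots,C_k$ (where $C_0 = \boundcomp$ and $\pi_1(C_i) = H_i$ for $i \geq 1$), the fiber becomes, up to homotopy, a space of further sphere systems supported in $C_0$ subject to the $\geomPD_n$-condition. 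Hatcher's classical contractibility theorem for the sphere complex of a fixed $3$-manifold provides the baseline input, on top of which the boundary-component condition must be incorporated.

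The main obstacle is precisely this last contractibility statement: the asymmetric treatment of $\boundcomp$ in the definition of $\geomPD_n$ prevents a direct appeal to Hatcher's theorem. I would expect to handle it by surgery arguments on sphere systems in the spirit of Hatcher and Hatcher--Vogtmann, deforming arbitrary sphere systems in the fiber to a canonical preferred shape. A useful consistency check is that restricting $\Psi$ to the spine $\spine \cap \geomPD_n = \geomD_n$ recovers the Hatcher--Vogtmann equivalence $\geomD_n \simeq \Decomp(F_n)$, and the enlargement from $\geomD_n$ to $\geomPD_n$ corresponds exactly to the enlargement from $\Decomp(F_n)$ to $\PD(F_n)$. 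An alternative -- and likely cleaner -- route is to factor $\Psi$ through the intermediate algebraic model $\Kfour_n$, exploiting its description as a basis-compatible combination of $\Decomp(F_n)$ and $\FC_n$ in order to split $\geomPD_n$ into the ``cut-vertex'' part (matching $\Decomp$) and the ``non-trivial $\pi_1(\boundcomp)$'' part (matching the extra $\FC$-factor), and thereby reduce to Hatcher--Vogtmann together with a separate argument for the free factor coming from the basepoint component.
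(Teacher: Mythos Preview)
Your primary proposal has a genuine gap: the map $\Psi\colon \geomPD_n \to \PD(F_n)$, sending $\sigma$ to the set of fundamental groups of the non-basepoint components of $M_{n,1}-\sigma$, is not well-defined. Take $\sigma$ to be a single non-separating sphere. Then $M_{n,1}-\sigma$ is connected, so there are no non-basepoint components at all, and $\Psi(\sigma)$ would be the empty collection---not an element of $\PD(F_n)$. Yet $\sigma\in\geomPD_n$, since $\pi_1(\boundcomp(\sigma))\cong F_{n-1}\neq 1$. Even when non-basepoint components do exist, their fundamental groups are only well-defined as subgroups of $F_n=\pi_1(M_{n,1},v_0)$ up to conjugacy (one must choose a path from $v_0$ to each component), so $\Psi(\sigma)$ is not a canonical element of $\PD(F_n)$. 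Your claim that refinement of sphere systems corresponds to refinement of partial decompositions runs into the same basepoint ambiguity.

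Your ``alternative---and likely cleaner---route'' via $\Kfour_n$ is exactly what the paper does, and for the reason you anticipate: the two clauses defining $\geomPD_n$ match the two factors $(\FC^+)^{\op}$ and $\Decomp^-$ of $\Kfour_n$. The paper's map $\alpha\colon\sd\geomPD_n\to\Kfour_n$ sends a chain $x$ to the pair $\bigl(\pi_1(\boundcomp(\max x_s)),\,\decompMap(\max x_t)\bigr)$, where $x_s$ collects the systems in $x$ with non-simply-connected basepoint component and $x_t$ the cut systems. Both coordinates are read off through the basepoint component alone, which is precisely what sidesteps the conjugacy problem above. Proving that the Quillen fibers of $\alpha$ are contractible is the content of an entire section and is substantial: it requires a uniqueness statement for a minimal cut system $\tau_d$ realizing each $d\in\Decomp$ (proved via Hepworth and Laudenbach), Hatcher--Vogtmann's contractibility of the relative sphere complexes $\sphere_A$, and a case analysis depending on whether a certain face $\tau_{d,A}\subseteq\tau_d$ is empty. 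So your diagnosis of where the difficulty lies is accurate, but the direct map you lead with must be replaced by $\alpha$ before that work can begin.
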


\cref{DsimCB} is the most technically involved part of the paper. We prove \cref{DsimCB} by studying various maps of posets and using contractibility of Auter space and its variants. Although one can construct maps from $\CB(F_n)$ and $\PD(F_n)$ to $\geomPD_n$ directly, it is technically much easier to compare $\geomPD_n$ to $\Kfour_n$.

The following diagram summarizes the homotopy equivalences between various models of $\CB(F_n)$ that we will need:
\begin{equation}
\label{eq:he_CB_PD_geomPD}
\begin{tikzcd}[row sep=large, column sep=large]
  & \PD(F_n) \arrow[dl, "\psi"', "\mathrm{Thm} \, \ref{thm:CBandPD}"] \arrow[dr, "\mathrm{Thm} \, \ref{thm:he_kfour}", "\varphi"'] & & \geomPD_n \arrow[dl, "\alpha"', "\mathrm{Thm} \, \ref{thm:he_geomPD_kfour}"]\\
  \CB(F_n) & & \Kfour_n
\end{tikzcd}    
\end{equation}
In fact, all the maps in \cref{eq:he_CB_PD_geomPD} are $\Aut(F_n)$-equivariant.

\subsection{Degree theorem} \label{proofStrategy} 
To show that $\geomD_n$ is spherical, Hatcher--Vogtmann filter the spine $\spine$ by subspaces of sphere systems with ``degree'' (see \cref{def:degree}) bounded by a fixed number. They prove a so-called degree theorem \cite[Corollary 3.2]{HatcherVogtmannCerf} which states that the subcomplexes in this filtration are highly connected in a range increasing as the degree tends to infinity.
They use this to prove that $\geomD_n$ is highly connected.

Aygun and the second author \cite[Theorem 1.2]{Aygun2022} proved a degree theorem for the simplicial closure of Auter space, i.e.,~for $\sphere(M_{n,1})$. In the present work, we use this result to prove that $\geomPD_n$ is $(2n-4)$-connected (see \cref{prop:connectivity_D}) which then implies \cref{thm:mainCB} using \cref{eq:he_CB_PD_geomPD}.

\subsection{Analogies and questions}
\label{sec:analogies}

\subsubsection{Rank filtrations and \texorpdfstring{$\CB(F_n)$}{CB(Fn)}} 

Rognes' interest in the connectivity of the common basis complex stemmed from its appearance in rank filtrations of algebraic $K$-theory.  
By work of Galatius \cite{SorenAutFn}, one has $$\Omega B \left(\bigoplus_n B\Aut(F_n) \right) \simeq \Omega^\infty \mathbb{S}.$$ Thus, the sphere spectrum $\mathbb{S}$ can be thought of as the algebraic $K$-theory spectrum of free groups. 
\begin{question} Does
$\mathbb{S}$ have a natural ``rank filtration'' whose associated graded is given by the $\Aut(F_n)$ homotopy orbits of $\Sigma^{\infty +1} \CB(F_n)$?
\end{question}

Compare with work of Brown--Chan--Galatius--Payne \cite{BCGP} on graph spectral sequences.

\subsubsection{Derived indecomposables}

It is interesting to compare our proof with that of the second author with Patzt and Wilson \cite{miller2023} (which closely follows a strategy developed by Galatius--Kupers--Randal-Williams \cite{GKRW4} to prove a version of Rognes' connectivity conjecture after taking homotopy orbits). To prove that $\CB(R^n)$ is highly connected for $R$ a field, they first show that $\CB(R^n)$ measures the commutative homology (also known as Harrison homology) of an equivariant ring made out of Steinberg modules. The Steinberg modules $\St(R^n)$ are defined as $\widetilde H_{n-2}(T(R^n))$ with $T(R^n)$ the Tits building associated to $\SL{n}{R}$ (the geometric realization of the poset of proper nonzero summands of $R^n$). Then they prove a vanishing line for the associative homology of the Steinberg ring by comparing it to the join $T(R^n) *T(R^n)$. Since a vanishing line for associative homology gives a vanishing line for commutative homology, it follows that $\widetilde H_*(\CB(R^n))$ vanishes in a range.

The replacement for the Tits building in this context is Hatcher--Vogtmann's free factor complex $\FC_n$. Call the top homology group of the free factor complex $\St(F_n)$ (although these representations are not generally the dualizing modules of $\Aut(F_n)$ \cite{10.1093/imrn/rnac330}). 

\begin{question}
    Does the reduced homology of $\CB(F_n)$ measure the commutative homology of an equivariant ring built out of the groups $\St(F_n)$? 
\end{question}

A possible approach involves comparing \cref{thm:he_kfour} with Campbell--Kupers--Zakharevich \cite[Section A.2]{KupAppendix}. 

\subsubsection{Resolutions}

The degree theorem for the simplicial closure of Auter space gives partial resolutions of $\St(F_n)$ in the same way that the highly-connected simplicial complexes of \cite{CFP,CP,BMPSW} give partial resolutions of $\St(\Z^n)$. Our proof that $\CB(F_n)$ is $(2n-4)$-connected should be compared with \cite[Theorem C]{MPPreductive} which implies $\CB(\Z^n)$ is $n$-connected for $n \geq 4$. From these partial resolutions of $\St(\Z^n)$, one can read off a vanishing line for the commutative homology of $\St(\Z)$ which can be used to show $\CB(\Z^n)$ is highly connected. Since these partial resolutions of $\St(\Z)$ only currently compute the $k$-syzygies for $k \leq 2$, the full connectivity conjecture remains open for $\CB(\Z^n)$. In contrast, in the $\Aut(F_n)$-case, these partial resolutions provide information about the $k$-syzygies for all $k$ and so we are able to establish a larger connectivity range.

%although these arguments do not ; see  \cite[Theorem C]{MPPreductive}.

One of the reasons that  $\CB(F_n)$ seems to be more tractable than $\CB(\Z^n)$ is that Voronoi's perfect cone tesselation for a bordification of the symmetric space associated to $\SL{n}{\Z}$ is more complicated than the simplicial complex structure on the simplicial closure of Auter space. If one could establish a ``degree theorem'' for the Voronoi tesselation, Rognes' connectivity conjecture for $\CB(\Z^n)$ would likely follow.

\subsubsection{Graph homology and \texorpdfstring{$\Out(F_n)$}{Out(Fn)}}

There is a natural variant of $\PD(F_n)$ adapted to the group $\Out(F_n)$ instead of $\Aut(F_n)$. Let $\PD^{conj}(F_n)$ denote the poset of collections of $F_n$-conjugacy classes $\{[N_1],\ldots, [N_k]\}$, where $\{N_1,\ldots, N_k\}$ is a partial decomposition of $F_n$. This complex was first defined by Handel--Mosher, who denoted it by $\mathcal{FF}(F_n,\emptyset)$ in \cite{HM:Relativefreesplitting}. The first author and Gupta \cite[Theorem B]{BG:Homotopytypecomplex} proved that $\PD^{conj}(F_n)$ is homotopy equivalent to the boundary of Outer space. They used this to prove that 
$\PD^{conj}(F_n)$ is $(n-2)$-connected and asked if in fact it was more highly connected \cite[Question 1.1]{BG:Homotopytypecomplex}. Our result that $$\PD(F_n) \simeq \bigvee S^{2n-3}$$ gives evidence for the belief that $\PD^{conj}(F_n)$ should in fact be $(2n-4)$-connected. Further indications for this are given in the work of the first and third authors \cite{Brueck2024b}. If that was the case, this would give a negative answer to questions by the first author \cite[Paragraph after Question 4.50]{Bru:buildingsfreefactora} and Vogtmann \cite[second question in Section 3.2]{Vogtmann2024}. If one were to use the techniques of the paper to study $\PD^{conj}(F_n)$, the definition of degree will need to be significantly modified since the version we consider fundamentally uses that the dual graphs have a basepoint.

Combining work of Conant--Vogtmann \cite[Proposition 27]{ConantVogtmann} with work of the first author and Gupta \cite[Theorem B]{BG:Homotopytypecomplex}, we obtain an isomorphism
$$H_i(\Out(F_n); \widetilde C_*(\PD^{conj}(F_n))  ) \cong H_{i-2n-2}(GC_2^{(n)})$$ 
between these equivariant hyper homology groups and the $n$th graded part of Kontsevich's commutative graph homology\footnote{We here follow the convention that a graph lives in $GC_2^{(n)}$ if its fundamental group has rank $n$ and then it has homological degree $k-(n+1)$, where $k$ is the number of its vertices. This agrees with the convention of Willwacher \cite{Willwacher} but is different from the one that Kontsevich originally used.} \cite{Kontsevich}. Combining this with the work of Willwacher \cite[Theorem 1.1]{Willwacher}, we see that 
$$H^{2n-2}\big(\Out(F_n);  \widetilde  C_*(\PD^{conj}(F_n))\big) \cong H^0(GC_2^{(n)}) \cong (\gtl_1)_n$$
is the $n$th graded piece of the Grothendieck-Teichm\"uller Lie algebra $\gtl_1$. If $\PD^{conj}(F_n)$ were shown to be $(2n-4)$-connected, it would imply the existence of an isomorphism
$$\bigoplus_n H^{1}\big(\Out(F_n);  \widetilde H_{2n-3}(\PD^{conj}(F_n)) \big) \cong \gtl_1 .$$
Since $\PD(F_n)$ is spherical, $$H^{2n-2}\big(\Aut(F_n); \widetilde  C_*(\PD(F_n)) \big) \cong H^1\big(\Aut(F_n);H_{2n-3}(\PD(F_n) ) \big).$$
Let
$$\mathcal L:=\bigoplus_n H^1\big(\Aut(F_n);H_{2n-3}(\PD(F_n) )\big).$$ 
The map $\Aut(F_n) \to \Out(F_n)$ gives a map \begin{equation} \label{gtl}
\gtl_1 \to \mathcal L.\end{equation}

\begin{question}
    Is there a natural Lie algebra structure on $\mathcal L$ making the map from \cref{gtl} a map of Lie algebras? 
\end{question}

%We invite the reader to investigate if this is a map of Lie algebras and if the free Lie algebra on the wheel classes survive (see \cite{BrownMTM}). 

If the sequence of representations $\St(F_n)$ has a duoidal bi-algebra structure in the sense of \cite[Remark 1.4]{AMP}, then 
$\mathcal L$ would naturally have the structure of a Lie algebra (also see Brown--Chan--Galatius--Payne \cite{BCGP}).

\subsection{Outline}

In \cref{Setup}, we review background on posets and simplicial complexes in general as well as those relevant to the paper. In \cref{FCcbD}, we prove that $\CB(F_n) \simeq \Kfour_n$. Then, in \cref{geoComplex}, we describe a map $\geomPD_n \to \Kfour_n$ and prove that it is a homotopy equivalence in \cref{sec:alpha_equiv}. Finally, in \cref{sec:connectivity_D}, we prove that the sphere system model $\geomPD_n$ is highly connected.

\subsection{Acknowledgments}
We thank José Joaquín Domínguez Sánchez, Radhika Gupta, Alexander Kupers, Peter Patzt, John Rognes, Robin J.~Sroka, and Jennifer Wilson for helpful conversations. 

The first author was supported by the Deutsche Forschungsgemeinschaft through Germany’s Excellence Strategy grant EXC 2044/2–390685587 and through Project 427320536–SFB 1442. The second author was supported by a Simons Foundation Travel Support for Mathematicians grant and NSF grants DMS-2202943 and DMS-2504473. The third author was supported by the FWO grant 12K1223N.

\section{Setup, preliminaries} \label{Setup}

In this section, we start by reviewing generalities about posets and simplicial complexes. Then we describe some posets of free factors. We end with a discussion of sphere systems in certain $3$-manifolds. All of the complexes and posets described in the present section have already appeared in previous work.

\subsection{Notation and poset terminology}

Let $X$ be a poset.
We write $\sd X$ for the poset of finite non-empty chains of $X$.
Sometimes, to relax the notation, we regard a simplicial complex as a poset via its face poset of non-empty simplices.
Likewise, we view a poset as a topological space, equipped with the topology induced by its order complex.
If $f,g:X\to Y$ are order-preserving maps between posets and $f(x)\leq g(x)$ for all $x\in X$, then $f,g$ give rise to homotopic maps between the geometric realizations.

Let $Y$ be a subposet of $X$.
For $y\in X$, the lower interval $Y_{\leq y}$ consists of the elements $x\in Y$ such that $x\leq y$.
We say that $Y$ is downward closed if $Y_{\leq y} = X_{\leq y}$ for all $y\in Y$.
We can analogously define $Y_{<y}$, $Y_{\geq y}$, $Y_{>y}$ and the upward-closed property.

Our main tool to prove that poset maps are homotopy equivalences is the well-known Quillen  Fiber Theorem (or Theorem A, see \cite{Qui78}):

\begin{theorem}
[Quillen]
\label{thm:quillen}
Let $f:X\to Y$ be an order-preserving map between posets.
Assume that for all $y\in Y$, the lower fiber $f^{-1}(Y_{\leq y})$ is contractible.
Then $f$ is a homotopy equivalence.
\end{theorem}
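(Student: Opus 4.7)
The plan is to use the classical double mapping cylinder construction for posets. Form the poset $M(f)$ on the disjoint union $X \sqcup Y$, keeping the original orders on each piece and adding the relations $x \leq_M y$ (for $x \in X$, $y \in Y$) whenever $f(x) \leq y$. There are canonical order-preserving inclusions $i \colon X \hookrightarrow M(f)$ and $j \colon Y \hookrightarrow M(f)$, and since $x \leq_M f(x)$ in $M(f)$ we have $i \leq j \circ f$ pointwise, so $i$ and $j \circ f$ induce homotopic maps on realizations. Hence it suffices to prove that both $i$ and $j$ are homotopy equivalences. The map $j$ has an obvious order-preserving retraction $r \colon M(f) \to Y$ given by $r(y) = y$ and $r(x) = f(x)$; one checks that $r \circ j = \mathrm{id}_Y$ and $j \circ r \geq \mathrm{id}_{M(f)}$ pointwise (since $(j \circ r)(x) = f(x) \geq_M x$), so by the poset homotopy principle recalled in the excerpt, $j$ is a homotopy equivalence with inverse $r$.

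The heart of the argument is to show that $i \colon X \to M(f)$ is also a homotopy equivalence. The key observation is that for every $y \in Y$, the lower interval $M(f)_{\leq y}$ has $y$ as a maximum and is thus contractible, while the subposet $X \cap M(f)_{\leq y}$ equals $f^{-1}(Y_{\leq y})$, which is contractible by hypothesis. This permits a filtration / gluing argument: for finite $Y$, induct on $|Y|$ by picking a maximal element $y_0 \in Y$, applying the inductive hypothesis to the restriction $f \colon f^{-1}(Y \setminus \{y_0\}) \to Y \setminus \{y_0\}$, and gluing back via a homotopy pushout using that both $f^{-1}(Y_{\leq y_0})$ and $M(f)_{\leq y_0}$ are contractible. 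For infinite $Y$, one passes to a colimit over finite down-closed subposets. Alternatively, one can invoke the nerve theorem for the two compatible covers of $X$ and of $M(f)$ indexed by $Y$.

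The main obstacle is this last step. The gluing / colimit argument requires care to ensure that the homotopy equivalences assemble compatibly and that passage to colimits preserves homotopy equivalences on order complexes; the nerve-theorem variant requires checking that all finite intersections of cover elements are contractible. Once this is in place, $f = r \circ i$ is a composition of the homotopy equivalence $i$ just established with the homotopy inverse $r$ of $j$, and is therefore itself a homotopy equivalence.
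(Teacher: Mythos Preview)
The paper does not give its own proof of this statement: it is quoted as the classical Quillen Fiber Theorem (Theorem~A) with a reference to \cite{Qui78}, and is used throughout as a black box. So there is no ``paper's proof'' to compare against; the question is only whether your outline stands on its own.

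Your mapping-cylinder framework is sound and standard: the poset $M(f)$, the retraction $r$ showing $j\colon Y\hookrightarrow M(f)$ is a homotopy equivalence, and the reduction of the problem to showing $i\colon X\hookrightarrow M(f)$ is a homotopy equivalence are all correct. You are also right that this last step is where the content lies, and you are candid that you have not fully carried it out.

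One concrete issue with the alternatives you list: the nerve-theorem variant does not work as stated. The cover $\{M(f)_{\leq y}\}_{y\in Y}$ need not be a good cover, because a finite intersection $\bigcap_i M(f)_{\leq y_i}$ has no reason to be contractible when the set $\{y_1,\dots,y_k\}$ has no greatest lower bound in $Y$; the same problem afflicts the corresponding cover of $X$. So that route requires either extra hypotheses on $Y$ or a more refined nerve statement. The inductive approach for finite $Y$ can be made to work, but the pushout you describe is not quite the right one: removing a maximal $y_0$ from $M(f)$ does not restrict $X$ at all, so the cover of $X$ you would want to match against is trivial. A cleaner finite-case argument is to remove maximal elements of $Y$ from $M(f)$ one at a time, checking at each step that the open star of $y_0$ can be collapsed because the link $M(f)_{<y_0}$ deformation retracts onto the contractible subposet $f^{-1}(Y_{\leq y_0})$; this in turn needs its own small argument. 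In short: correct architecture, but the step you flag as the main obstacle genuinely is one, and neither of the two sketches you offer closes it without further work.
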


\subsection{Algebraic complexes}
\label{sec:algebraic_complexes}

Let $\factorposet$ be the set of all non-empty sets of non-trivial proper free factors of $F_n$.
There are two natural poset relations on $\factorposet$: One given by \emph{inclusion}, $c\leq d$ if $c\subseteq d$, and one given by \emph{refinement}, $c\leq d$ if for all $A\in c$, there is $B\in d$ with $A\subseteq B$.
We consider three posets on elements of $\factorposet$:
\begin{itemize}
\item $\FC_n$ is the poset of non-trivial proper free factors of $F_n$. We equip it with the inclusion relation. The order complex of this poset is the complex of free factors defined in \cite{HV1998}.
Notice that $\FC_n$ is a subposet of $\factorposet$ via $A\mapsto \{A\}$, with order given by refinement in the latter.
\item $\CB(F_n)=\CB_n\subset \factorposet$ consists of all $d\in \factorposet$ that are basis compatible. That is,~there exists a basis of $F_n$ such that each $A\in d$ is generated by a subset of this basis. We equip it with the inclusion relation from $\factorposet$.
Note that $\CB(F_n)$ is isomorphic to the face poset of a simplicial complex, so we can also view $\CB(F_n)$ as a simplicial complex.
\item $\PD(F_n) = \PD_n \subset \factorposet$ consists of all $d\in \factorposet$ such that the elements of $d$ form a partial decomposition. Recall that a collection of free factors $d=\ls A_1,\ldots, A_k \rs$ is a partial decomposition if the natural map $$ A_1\ast \cdots \ast A_k \to F_n$$ is injective with image a free factor.   We equip $ \PD_n$ with the refinement relation from  $\factorposet$. 
In general, $\PD_n$ is not a simplicial complex.
We write $ \Decomp(F_n) = \Decomp_n \subset \PD(F_n)$ for the subposet given by all full decompositions, that is, partial decompositions that span $F_n$.
\end{itemize}
The order complex of $\FC_n$ is the complex of free factors defined by Hatcher--Vogtmann \cite{HV1998}. The posets $\CB(F_n)$ and $\PD(F_n)$ generalize related complexes for rings defined by Rognes \cite{ROGNES1992813} and Hanlon--Hersh--Shareshian \cite{HHS}, respectively, and they already appeared in \cite{Brueck2024, Piterman2024}.
Note both $\CB(F_n)$ and $\PD(F_n)$ contain only \emph{finite} collections of free factors.
When $n$ is fixed, we will sometimes drop the subscript $n$ or $F_n$ from the notation and just write $\FC$, $\CB$, $\PD$, and $\Decomp$.

\begin{example}
Every maximal chain in $\PD(F_2)$ is of the form
\[  \{ \Span{e_1} \} \leq \{ \Span{e_1}, \Span{e_2} \}, \]
where $\{e_1,e_2\}$ is a basis of $F_2$.
In particular, the refinement ordering is just the inclusion ordering.

On the other hand, the maximal simplex of $\CB(F_2)$ corresponding to $\{e_1,e_2\}$ is given by
\[ \{\Span{e_1}, \Span{e_2}\}.\]
In this case, we see that $\CB(F_2) = \PD(F_2)$, and they have the homotopy type of a wedge of $1$-spheres since they are $1$-dimensional and connected (see discussion above Proposition 6.7 in \cite{Piterman2024}; cf. \cite{Sadofschi}).
\end{example}

The following is a special case of \cite[Theorem 2.9]{Brueck2024}.

\begin{theorem}
\label{thm:CBandPD}
Let $n\geq 1$.
There is an $\Aut(F_n)$-equivariant map $\psi: \sd \PD(F_n)\to \CB(F_n)$ that is a homotopy equivalence.
\end{theorem}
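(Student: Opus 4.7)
The plan is to exhibit $\psi$ concretely and then deduce the homotopy equivalence from Quillen's fiber theorem (\cref{thm:quillen}).

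First I would define $\psi$ on a chain $\tau = (d_1 < d_2 < \cdots < d_m)$ in $\sd\,\PD(F_n)$ by taking the union
\[
\psi(\tau) = d_1 \cup d_2 \cup \cdots \cup d_m,
\]
regarded as a collection of free factors in $\factorposet$. The nontrivial content of this definition is that $\psi(\tau)$ is basis compatible and hence lies in $\CB(F_n)$. I would verify this by descending induction on the chain: the top element $d_m$ is a partial decomposition and so admits a basis $B$ of $F_n$ realising it; the elements of $d_{m-1}$ contained in a fixed $B_j \in d_m$ form a partial decomposition of the free factor $B_j$, so the sub-basis of $B$ corresponding to $B_j$ can be swapped for one compatible with them without disturbing compatibility with $d_m$. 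Iterating down the chain yields a single basis of $F_n$ realising $d_1 \cup \cdots \cup d_m$ as basis compatible. Order preservation with respect to inclusion on $\CB(F_n)$ and $\Aut(F_n)$-equivariance are immediate from the construction.

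Next, for any $c \in \CB(F_n)$, the lower fiber $\psi^{-1}(\CB(F_n)_{\leq c})$ consists of the chains $(d_1 < \cdots < d_m)$ in $\PD(F_n)$ with $d_i \subseteq c$ for all $i$, and this is precisely the order complex $\sd(\PD_{\subseteq c})$ of the subposet $\PD_{\subseteq c} = \{d \in \PD(F_n) \tq d \subseteq c\}$. By \cref{thm:quillen}, the theorem reduces to showing that $\PD_{\subseteq c}$ is contractible for every $c \in \CB(F_n)$.

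Contractibility of $\PD_{\subseteq c}$ is the technical heart of the argument. I would fix a basis $B$ of $F_n$ realising the basis compatibility of $c$ and identify each $A \in c$ with the subset $S_A \subseteq B$ that generates it; then $\PD_{\subseteq c}$ becomes the poset of collections of pairwise disjoint $S_A$'s, ordered by the relation ``every member of the smaller collection is contained in some member of the larger one''. My approach would be to construct a zig-zag of order-preserving self-maps $\mathrm{id} \leq \Phi \geq \mathrm{const}_{d_*}$, where $\Phi$ repeatedly merges disjoint sub-families of $d$ whose union is again of the form $S_{A'}$ for some $A' \in c$, eventually producing a canonical ``coarsest'' partial decomposition $d_*$ inside $c$. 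The main obstacle is that $\PD_{\subseteq c}$ need not have a unique maximum---small examples such as $c = \{\Span{e_1}, \Span{e_2,e_3}, \Span{e_1,e_2}\}$ already show this---so the merging procedure has to be chosen carefully. If the direct construction turns out too delicate, I would fall back on induction on $|c|$: remove one inclusion-minimal element of $c$ at a time, understand how the fiber changes, and glue the resulting contractible pieces together. Once $\PD_{\subseteq c}$ is contractible, \cref{thm:quillen} delivers the desired homotopy equivalence.
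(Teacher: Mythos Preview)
The paper does not give its own proof of this statement; it simply records it as a special case of \cite[Theorem 2.9]{Brueck2024}. Your construction of $\psi$ as the union of the chain is exactly the map used there, and your reduction via Quillen's fiber theorem to the contractibility of $\PD_{\subseteq c}$ for every simplex $c\in\CB(F_n)$ is the correct shape of the argument. Your justification that $\psi(\tau)\in\CB(F_n)$ (a chain of partial decompositions admits a common basis) is essentially \cite[Lemma~2.7]{Brueck2024}, which the present paper also invokes in the proof of \cref{thm:he_kfour}; your inductive sketch of it is fine.

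The genuine gap is the contractibility of $\PD_{\subseteq c}$, which you correctly flag as the technical heart but do not actually establish. The zig-zag $\mathrm{id}\leq\Phi\geq\mathrm{const}_{d_*}$ you outline cannot work as stated: there is in general no canonical ``coarsest'' element $d_*\in\PD_{\subseteq c}$, and no monotone self-map $\Phi$ dominating both the identity and a constant. Already in your own example $c=\{\langle e_1\rangle,\langle e_2,e_3\rangle,\langle e_1,e_2\rangle\}$ the two maximal elements $\{\langle e_1,e_2\rangle\}$ and $\{\langle e_1\rangle,\langle e_2,e_3\rangle\}$ are incomparable and neither dominates a common $d_*$ via an order-preserving merge. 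Your fallback of inducting on $|c|$ is a reasonable direction, but as written it is only a plan: you would have to specify which element of $c$ to remove, describe the resulting fiber decomposition, and check that all the pieces and their intersections are contractible. Until that step is carried out, the proposal is a correct reduction together with an unfinished sketch of the key lemma, rather than a complete proof.
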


\subsection{Geometric complexes}
\label{sec:geom_complexes}

We will also work with sphere system complexes, which first appeared in work of Whitehead \cite{Whi:CertainSetsElements,Whitehead1936} and were later generalized by Hatcher \cite{Hat:Homologicalstabilityautomorphism}.
Let $M_{n,k}$ denote the manifold obtained as the $n$-fold connected sum of $S^2\times S^1$ after removing the interior of $k$ disks (i.e., with $k$ boundary components).
A sphere system in $M_{n,k}$ is a finite set of isotopy classes of disjointly embedded $2$-spheres which are not isotopic to a boundary sphere and do not bound a $3$-disk.
We denote by $\sphere(M_{n,1})$ the simplicial complex whose $p$-simplices are the sphere systems in $M_{n,k}$ that have size $p+1$ (i.e.~consist of $p+1$ different isotopy classes of embedded $2$-spheres) and where the face relation is given by containment.

If $\sigma = \{ [S_1],\ldots, [S_r]\}\in \sphere(M_{n,k})$ is a sphere system, we can choose representatives $S_i$ and open tubular neighborhoods $S_i\subset T_i$ in the interior of $M_{n,k}$ such that $T_i\cap T_j = \emptyset$ for $i\neq j$. We will write $M_{n,k}- \sigma$ for $M_{n,k} \setminus \bigcup_i T_i$. Indeed, $M_{n,k}- \sigma$ is a (possibly disconnected) 3-manifold with boundaries and its diffeomorphism type depends only on $\sigma$, not on the choices of representatives $S_i$ and open tubular neighborhoods $T_i$. Each connected component $N$ of $M_{n,k}- \sigma$ is diffeomorphic to $M_{n',k'}$ for some $n',k'\in \mbN$
See Figure \ref{fig:completionVSsubmanifold}.
\begin{figure}
    \centering
    \includegraphics{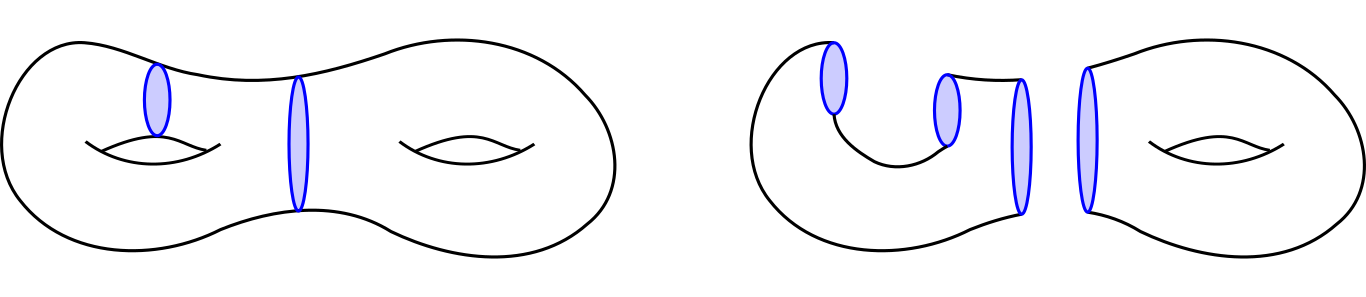}
    \caption{Left: Manifold $M_{2,0}$ with embedded sphere system $\sigma = \ls [S_1], [S_2]\rs$ in blue. Right: The manifold $M_{2,0}-\sigma$ has two connected components, diffeomorphic to $M_{0,3}$ and $M_{1,1}$.}
    \label{fig:completionVSsubmanifold}
\end{figure}
One can show that there is an embedding $\sphere(N)\hookrightarrow \lk_{\sphere(M_{n,k})}(\sigma)$ whose image is given by all isotopy classes of sphere systems that have representatives lying in $N$ and are disjoint from $\sigma$.

A sphere system $\sigma$ has a \emph{dual graph} that is defined as follows (cf.~\cite[Appendix]{Hat:Homologicalstabilityautomorphism}).
The vertices of $\Gamma(\sigma)$ are the connected components of $M_{n,k} - \sigma$.
Each $[S]\in \sigma$ determines an edge in $\Gamma(\sigma)$ joining the connected components on the two sides of the sphere $S$.
Note that this graph may have multiple edges and loops. 

We now recall a way in which $\Aut(F_n)$ naturally acts on $\sphere(M_{n,1})$. For $M$ an orientable manifold with boundary, let $\Diff(M)$ denote the group of orientation-preserving diffeomorphisms that fix a neighborhood of the boundary pointwise and let $\MCG(M)$ denote the associated mapping class group, that is, the quotient of $\Diff(M)$ by the isotopy relation. The work of Laudenbach \cite{Laudenbach} implies that the action of $\Diff(M_{n,1})$ on $\pi_1(M_{n,1}) \cong F_n$ induces a surjection $$\MCG(M_{n,1}) \to  \Aut(F_n),$$ and that the action of $\MCG(M_{n,1})$ on $ \sphere(M_{n,1})$
factors through $\Aut(F_n)$. In particular, $\Aut(F_n)$ acts on $\sphere(M_{n,1})$.

\section{The poset \texorpdfstring{$\Kfour$}{FC xCB Decomp} and the equivalence \texorpdfstring{$\varphi$}{varphi}}
\label{FCcbD}

From now on, we drop the subscript-$n$ notation and assume that our posets and simplicial complexes are defined for a fixed free group $F_n$ of rank $n$.

Let  $\FC^+ = \FC\cup \{F_n\}$, and write $(\FC^+)^{\op}$ for the poset of non-trivial (but possibly improper)
free factors of $F_n$ with order given by reverse inclusion, i.e.~$A\leq B$ if $A\supseteq B$. 
Let $\Decomp^- = \Decomp \cup \{\emptyset\}$ be the poset $\Decomp$ of (proper) free factor decompositions of $F_n$ where we have added a unique minimal element $\emptyset$.
We take the subposet of $\left( (\FC^+)^{\op}\times \Decomp^- \right) \setminus (F_n,\emptyset)$ of ``basis compatible'' elements, defined by
\begin{equation}
\label{eq:defKfour}
\Kfour := \{ (A,d)\in \FC\times \Decomp \tq \{A\}\cup d\in \CB\} \cup\{ (A,\emptyset) \tq A\in \FC\} \cup \{ (F_n,d)\tq d\in \Decomp\}.
\end{equation}

We will use the following characterization for basis compatibility between free factors and decompositions.

\begin{lemma}
\label{lem:free_factors_decompositions_common_bases}
Let $d\in \Decomp = \Decomp(F_n)$.
For a free factor $A<F_n$, define $I_{A,d} = \{A\cap D\tq D\in d\} \setminus \{ \{1\}\}$ to be the set of all non-trivial intersections of elements from $d$ with $A$.
Then the following hold:
\begin{enumerate}
    \item We have $\gen{I_{A,d}} \cong *_{B\in I_{A,d}} B$
     and $I_{A,d} \in \PD(A) \cup \{ \{A\},\emptyset\}$.
     \item If $s=(A_1 < \cdots < A_k)$ is a chain of free factors of $F_n$, then $s\cup d\in \CB(F_n)$ if and only if for all $A\in s$, we have $I_{A,d} \in \Decomp(A) \cup \{ \{A\} \}$.
\end{enumerate}
\end{lemma}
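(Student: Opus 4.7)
My plan for part (1) is to apply the Kurosh subgroup theorem to the free product decomposition $F_n = \ast_{D \in d} D$ (recall that $d$ is a full decomposition). Kurosh expresses $A$ as a free product of a free group with intersections of $A$ against various conjugates of the factors $D$. Since distinct $D \in d$ intersect trivially in $F_n$, a reduced-word argument in $\ast_{D \in d} D$ shows that any non-trivial alternating product of elements from distinct non-trivial intersections $A \cap D$ is non-trivial in $F_n$, and hence in $A$; this yields the internal free product $\langle I_{A,d}\rangle = \ast_{B \in I_{A,d}} B$. Choosing the trivial double-coset representative for each relevant $D$ identifies $\langle I_{A,d}\rangle$ with a sub-free-product inside the Kurosh decomposition of $A$, which forces it to be a free factor of $A$. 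A short case analysis on $|I_{A,d}|$ and on whether $\langle I_{A,d}\rangle = A$ then yields the trichotomy $I_{A,d} \in \PD(A) \cup \{\{A\}, \emptyset\}$.

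For part (2), the direction $(\Rightarrow)$ is essentially bookkeeping. Given a common basis $\mathcal{B}$ for $s \cup d$, the generating subsets $\mathcal{B}_D \subseteq \mathcal{B}$ of the factors $D \in d$ partition $\mathcal{B}$ since $F_n = \ast_D \langle \mathcal{B}_D\rangle$. For each $A \in s$ with generating subset $\mathcal{B}_A \subseteq \mathcal{B}$, this partition restricts to $\mathcal{B}_A = \bigsqcup_D (\mathcal{B}_A \cap \mathcal{B}_D)$, and uniqueness of reduced forms in $\ast_D D$ yields $\langle \mathcal{B}_A \cap \mathcal{B}_D\rangle = A \cap D$. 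Hence $A = \ast_D (A \cap D)$ over the non-trivial intersections, and since $A$ is non-trivial, $I_{A,d}$ is non-empty; we conclude $I_{A,d} \in \Decomp(A) \cup \{\{A\}\}$.

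The direction $(\Leftarrow)$ is the heart of the lemma, and I would prove it by constructing a common basis for $s \cup d$ one factor $D \in d$ at a time. The first and most technical step is to show that, under the hypothesis, each $A_i \cap D$ is a free factor of $D$. Since $A_i$ is a free factor of $F_n$, there is a retraction $r \colon F_n \twoheadrightarrow A_i$. When $I_{A_i, d} \in \Decomp(A_i)$, so that $A_i = \ast_{D'}(A_i \cap D')$, composing $r|_D$ with the projection $\pi_D \colon A_i \twoheadrightarrow A_i \cap D$ (killing the other factors of this decomposition) yields a retraction $D \twoheadrightarrow A_i \cap D$. The case $I_{A_i, d} = \{A_i\}$ is similar: if $A_i \subseteq D_{j_0}$, then $r|_{D_{j_0}}$ already retracts $D_{j_0}$ onto $A_i$, while $A_i \cap D$ is trivial for $D \neq D_{j_0}$. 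Moreover, a free factor contained in a free factor is a free factor of the latter (by restricting a retraction), so $A_1 \cap D \leq \cdots \leq A_k \cap D$ is a chain of free factors of $D$. Inductively extending bases along this chain produces a basis $\mathcal{B}_D$ of $D$ compatible with every $A_i \cap D$. Finally, $\mathcal{B} := \bigsqcup_D \mathcal{B}_D$ is a basis of $F_n$ compatible with $d$, and each $A_i$ is generated by the subset $\bigsqcup_D \mathcal{B}_{A_i \cap D}$ of $\mathcal{B}$ since $A_i = \ast_D (A_i \cap D)$ (respectively $A_i = A_i \cap D_{j_0}$).

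The main obstacle will be verifying that $A_i \cap D$ is a free factor of $D$ in the first step of $(\Leftarrow)$: this is \emph{false} for arbitrary pairs of free factors of $F_n$, and what makes it work here is precisely the compatibility hypothesis $I_{A_i, d} \in \Decomp(A_i) \cup \{\{A_i\}\}$, which provides the projection $\pi_D$ needed to convert the global retraction $r$ into the local one $D \twoheadrightarrow A_i \cap D$.
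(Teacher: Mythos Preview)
Your proof is correct, and your argument for part (1) and for (2)$(\Rightarrow)$ is in the same spirit as the paper's, though more explicit (the paper dispatches part (1) in one line by citing the standard fact that the intersection of two free factors of $F_n$ is a free factor of each, rather than invoking Kurosh directly). The real divergence is in (2)$(\Leftarrow)$.

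The paper's argument there is much shorter: it observes that under the hypothesis one has a chain
\[
I_{A_1,d} < \cdots < I_{A_k,d} < d
\]
in $\PD(F_n)$ (each $I_{A_i,d}$ is a partial decomposition of $F_n$ because $\langle I_{A_i,d}\rangle = A_i$ is a proper free factor), and then invokes \cite[Lemma~2.7]{Brueck2024}, which says that any chain in $\PD$ admits a common basis. That basis is then automatically compatible with each $A_i = \langle I_{A_i,d}\rangle$.

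Your approach instead builds the common basis by hand, one factor $D\in d$ at a time, after establishing that $A_1\cap D \leq \cdots \leq A_k\cap D$ is a chain of free factors of $D$. This buys self-containment at the cost of length. Two remarks on your execution. First, your retraction argument for ``$A_i\cap D$ is a free factor of $D$'' is valid but silently uses that a retract of a free group is a free factor---true, but not elementary (it is a theorem of Bergman). You can avoid this entirely by appealing to the same ``intersection of free factors is a free factor of each'' fact: it gives $A_i\cap D$ a free factor of $D$ directly, and writing $A_i\cap D = A_i\cap(A_{i+1}\cap D)$ also gives it as a free factor of $A_{i+1}\cap D$. Second, once you have that chain of free factors inside each $D$, your inductive basis extension is exactly what the cited lemma from \cite{Brueck2024} does in the background, so the two proofs ultimately converge.
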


\begin{proof}
Item (1) is clear since the intersection of free factors $A\cap D$ is a free factor of both $A$ and $D$, and $d$ is a full free factor decomposition of $F_n$.

For Item (2), note that if there is a common basis $\mcB$ for $s\cup d$ then the intersection $A\cap D$, with $A\in s$ and $D\in d$, is spanned by $A \cap D \cap \mcB$.
As $D$ runs over the elements of $d$, we get to
\[ A = \gen{A\cap \mcB}  = \gen{ A\cap \big( \bigcup_{D\in d} D\cap\mcB\big) } = \gen{ \bigcup_{D\in d}  A\cap D\cap\mcB} = \gen{I_{A,d}}.\]
Thus $I_{A,d}\in \Decomp(A)\cup \{ \{A\}\}$.

For the converse, note that
\[ I_{A_1,d} < \cdots < I_{A_k,d} < d\]
is a chain in $\PD(F_n)$.
Hence, by Lemma 2.7 of \cite{Brueck2024}, there is a common basis for this chain.
As $\gen{I_{A_i,d}} = A_i$, we conclude that $s\cup d\in \CB(F_n)$.
\end{proof}

Using the above, we prove the following.

\begin{theorem}
\label{thm:he_kfour}
There is an $\Aut(F_n)$-equivariant order-preserving map $\varphi: \sd\PD \to \Kfour$ that is a homotopy equivalence.
\end{theorem}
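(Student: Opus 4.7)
The plan is to define $\varphi$ by sending a chain $c = (c_1 < c_2 < \cdots < c_k) \in \sd\PD$ to
\[
\varphi(c) := (\spn(c_1),\ \delta(c_k)),
\]
where $\spn(\{N_1,\ldots,N_r\}) := N_1 \ast \cdots \ast N_r$, $\delta(e) := e$ if $e \in \Decomp$, and $\delta(e) := \emptyset$ otherwise. That $\varphi(c)$ lies in $\Kfour$ is immediate except when $A := \spn(c_1) \in \FC$ and $d := \delta(c_k) \in \Decomp$. In that case, applying Lemma 2.7 of \cite{Brueck2024} to the chain $c_1 \leq c_k = d$ in $\PD$ produces a common basis for $c_1 \cup d$, from which a short calculation yields $A = \ast_{D\in d}(A\cap D)$, so $I_{A,d} \in \Decomp(A)\cup\{\{A\}\}$, and \cref{lem:free_factors_decompositions_common_bases}(2) gives $\{A\}\cup d \in \CB$. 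The forbidden pair $(F_n,\emptyset)$ never appears in the image, since $\spn(c_1) = F_n$ forces $c_k \in \Decomp$. Order-preservation and $\Aut(F_n)$-equivariance are immediate from the definition.

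To prove $\varphi$ is a homotopy equivalence I would apply Quillen's fiber theorem (\cref{thm:quillen}). Since span is monotone along chains in $\PD$ and both ``being non-full'' and ``refining $d$'' are preserved under passage to finer partial decompositions, the lower fiber $\varphi^{-1}(\Kfour_{\leq (A,d)})$ agrees with the order complex of the subposet
\[
Q_{(A,d)} := \{e \in \PD \tq \spn(e)\supseteq A,\ \delta(e) \leq d \text{ in } \Decomp^-\},
\]
so the task reduces to showing each $Q_{(A,d)}$ is contractible. When $A = F_n$, $Q_{(A,d)} = \Decomp_{\leq d}$ has maximum $d$. When $A \in \FC$ and $d = \emptyset$, the order-preserving endomorphism $r(e) := \{\spn(e)\}$ of $Q_{(A,\emptyset)}$ satisfies $e \leq r(e)$ and retracts the poset onto the subposet of singletons $\{M\}$ with $A \subseteq M \subsetneq F_n$, which has the minimum $\{A\}$.

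The main obstacle is the remaining case $A \in \FC$, $d \in \Decomp$. Here $Q_{(A,d)}$ mixes non-full elements with full elements refining $d$, and the retraction $e \mapsto \{\spn(e)\}$ is incompatible with the full ones (since $\spn(e) = F_n \notin \FC$). My plan is to write $Q_{(A,d)} = Q' \cup Q''$ with $Q' = \{e \in Q_{(A,d)} : e \notin \Decomp\}$ and $Q'' = \Decomp_{\leq d}$, both contractible by arguments parallel to the previous two cases, and then to establish contractibility of the overlap $Q' \cap \PD_{\leq d}$. Using the basis-compatibility $\{A\}\cup d \in \CB$ and the induced decomposition $A = \ast_{D\in d}(A\cap D)$, this overlap should admit a product description over the factors $D \in d$ that reduces inductively to cases already handled; a standard poset pushout/Nerve argument then assembles contractibility of $Q_{(A,d)}$ from these three pieces.
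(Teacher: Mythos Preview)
Your map $\varphi$ and your identification of the lower fiber with $\sd Q_{(A,d)}$ are correct and agree with the paper's (the paper writes $Q_{(A,d)}$ as $P_1\cup P_2$ with $P_1=\Decomp_{\leq d}$ and $P_2=\{s\in\PD\setminus\Decomp:\spn(s)\supseteq A\}$, which is exactly your $Q''\cup Q'$). Your treatment of the two degenerate cases is also the same.

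There are two issues in the main case. First, the covering $Q'\cup Q''$ is not one to which a poset pushout/nerve argument directly applies: $Q'$ is downward closed in $Q_{(A,d)}$, but $Q''=\Decomp_{\leq d}$ is \emph{upward} closed, and $Q'\cap Q''=\emptyset$. The order complex of a union of a downward-closed and an upward-closed subposet is not in general the pushout over the intersection (take $\{a<b\}$ with $U=\{b\}$, $V=\{a\}$). The paper fixes this by replacing $Q''$ with its downward closure $P_{\leq d}=\{s\in Q_{(A,d)}:s\leq d\}$, which is still contractible (maximum $d$), so that both pieces are downward closed and the pushout formalism applies. The overlap is then $P_{\leq d}\cap P_2=Q'\cap\PD_{\leq d}$, which is the set you name---so you may well have had this in mind, but as written the ``three pieces'' do not fit together.

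Second, and more seriously, you do not actually prove contractibility of the overlap $Q'\cap\PD_{\leq d}$. A ``product description over the factors $D\in d$'' is obstructed by the global constraints that $s$ be non-empty and $\spn(s)\neq F_n$; removing a top and a bottom element from a product of posets need not preserve contractibility, and there is no evident induction that reduces this to the degenerate cases. The paper gives instead a direct conical contraction: set $a_d=\{A\cap D:D\in d,\ A\cap D\neq 1\}$ and $f(s)=\{\spn(s)\cap D:D\in d,\ \spn(s)\cap D\neq 1\}$; then $a_d\in Q'\cap\PD_{\leq d}$ by basis-compatibility of $(A,d)$, $f$ is an order-preserving endomorphism of the overlap, and one has the zigzag $s\leq f(s)\geq a_d$. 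This is the missing idea.
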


\begin{proof}
For an element $x\in \sd\PD$, we write $x_f = \{ s\in x\tq s\notin \Decomp\}$ , and $x_d = x\setminus x_f = \{ d\in x\tq d\in \Decomp\}$.
Thus $x = x_f \cup x_d$.
We define:
\[ \varphi_1(x) = \begin{cases}
    \min_{s\in x_f} \gen{s} & x_f\neq\emptyset,\\
    F_n & x_f = \emptyset.
\end{cases}\]
and
\[ \varphi_2(x) = \begin{cases}
    \max x_d & x_d\neq\emptyset,\\
    \emptyset & x_d = \emptyset.
\end{cases}\]
Let $\varphi(x) = (\varphi_1(x),\varphi_2(x))$.
The map $\varphi$ is well-defined since $\varphi(x) \neq (F_n,\emptyset)$, and $\varphi_1(x),\varphi_2(x)$ have a common basis.
Note that if $x \subseteq y$, then $x_f\subseteq y_f$, and therefore $\varphi_1(x) \supseteq \varphi_1(y)$.
On the other hand, $x_d \subseteq y_d$ implies that $\varphi_2(x) \leq \varphi_2(y)$.
Thus, $\varphi$ is an order-preserving poset map.
Also note that $\varphi$ is $\Aut(F_n)$-equivariant.

Next, we show that $\varphi$ is a homotopy equivalence by applying Quillen's fiber theorem.
Let $(A,d)\in \Kfour$.
We prove that $\varphi^{-1}(\Kfour_{\leq (A,d)})$ is contractible.
Notice that $x\in \varphi^{-1}(\Kfour_{\leq (A,d)})$ if and only if $\varphi_1(x) = \varphi_1(x_f) \supseteq A$ and $\varphi_2(x) = \varphi_2(x_d) \leq d$.
Therefore, $\varphi^{-1}(\Kfour_{\leq (A,d)}) = \sd(\Fdecomp \cup \Fpartdecomp)$, where
\begin{align*}
    \Fdecomp & = \Decomp_{\leq d}, \ \text{ and }\\
    \Fpartdecomp & = \{s\in \PD \setminus \Decomp \tq \gen{s} \supseteq A\}.
\end{align*}
Let $P := P_1\cup P_2$.
Note that if $d\neq\emptyset$, then $\Fdecomp$ and $P_{\leq d}$ are contractible.

% Let $\hat{\Fpartdecomp} = \{ s\in \Fpartdecomp\tq s\leq d\} = \Fpartdecomp \cap \PD_{\leq d}$.
We have that $P = P_{\leq d} \cup P_2$ is a union of downward-closed subposets.
If $A = F_n$, then $\Fpartdecomp = \emptyset$, and hence $P = \Fdecomp$ is contractible.
Thus we can assume that $A\neq F_n$.
In such a case, the poset $\Fpartdecomp$ is contractible via the following zig-zag inequality of poset maps:
\[ s \leq \{ \gen{s} \} \geq \{ A\}. \]
Note that all these terms lie in $\Fpartdecomp$.
If $d = \emptyset$, then $\Fdecomp = \emptyset$ and so $P = \Fpartdecomp$ is contractible.

Next, we prove that if $d\neq\emptyset$ and $A\neq F_n$, then $P_{\leq d} \cap \Fpartdecomp$ is contractible as well.
Here
\[P_{\leq d}\cap \Fpartdecomp = \{ s\in \PD \setminus \Decomp \tq \gen{s}\supseteq A \text{ and } s\leq d\}.\]
Write $a_d = \{A\cap D \tq D\in d\}$.
Since $A,d$ are basis compatible, by \cref{lem:free_factors_decompositions_common_bases} we conclude that $\gen{a_d} =A$ and $a_d\leq d$, so $a_d\in P_{\leq d}\cap \Fpartdecomp$.
For $s\in P_{\leq d}\cap \Fpartdecomp$, let
\[ f(s)=\{ \gen{S\in s\tq S\leq D} : D\in d\} = \{ \gen{s}\cap D\tq D\in d\}.\]
Again, by \cref{lem:free_factors_decompositions_common_bases}, we have that $s\leq f(s) \leq d$ and $f(s) \in P_{\leq d}\cap \Fpartdecomp$ for $s\in P_{\leq d}\cap \Fpartdecomp$.
Thus we have an order-preserving map $f:P_{\leq d}\cap \Fpartdecomp \to P_{\leq d}\cap \Fpartdecomp$.
Moreover, as $\gen{s}\supseteq A$, we have
\[ s \leq f(s) \geq a_d.\]
Hence $P_{\leq d}\cap \Fpartdecomp$ is contractible.

We have shown that for $A\neq F_n$ and $d\neq\emptyset$, $P$ is also contractible as it is the union of two contractible downward-closed subposets whose intersection is contractible.
Therefore $\varphi^{-1}(\Kfour_{\leq (A,d)}) =  \sd P$ is contractible, for any pair $(A,d)\in \Kfour$.
By Quillen's fiber theorem, $\varphi$ is a homotopy equivalence.
\end{proof}

\begin{remark} One can easily define a version of $\Kfour$ for rings and the above arguments go through for sufficiently nice rings (e.g.~local rings or Dedekind domains). In fact, \cite{Brueck2024} defined a general framework that includes free groups, modules over sufficiently nice rings, as well as other examples such as matroids. In this generality, they proved $\CB \simeq \PD$. Our equivalence $\PD \simeq \Kfour $ can be adapted to this generality. 
\end{remark}

\section{The geometric complex \texorpdfstring{$\geomPD$}{PDg} and the map \texorpdfstring{$\alpha\colon \geomPD\to \Kfour$}{alpha}} \label{geoComplex}

In this section, we define a poset $\geomPD$, which we regard as a geometric version of $\Kfour$ and we give a map $\alpha\colon \geomPD\to \Kfour$.

Throughout this section, we fix $n$ and write $M = M_{n,1}$ and $\sphere = \sphere(M_{n,1})$. We fix a point $v_0$ in the boundary of $M$ and an identification $F_n \cong \pi_1(M,v_0)$.
If $\delta$ is a sphere system in $M$, we call the connected component of $M \sm \delta$ that contains $v_0$ the \emph{basepoint component} and denote it by $\boundcomp(\delta)$ (or just $\boundcomp$).
Recall that $\Gamma(\delta)$ denotes the dual graph of a sphere system $\delta\in \sphere$, as defined in \cref{sec:geom_complexes}. By definition, $\boundcomp(\delta)$ is a vertex of $\Gamma(\delta)$.

A vertex $v$ of a graph $G$ is a \emph{cut vertex} if $|G|\setminus \{v\}$ is disconnected, where $|G|$ denotes the geometric realization of the graph.
We say that the sphere system \emph{$\delta$ is cut}, if $\boundcomp(\delta)$ is simply connected and $\boundcomp(\delta)$ is a cut vertex of $\Gamma(\delta)$. For example, the system $\tau$ in the manifold $M$ at the top left of \cref{fig:MCcovering} is cut.

\begin{definition}
\label{def:geomPD}
Let $\geomPD = \geomPD_n$ be the subposet of $\sphere$ consisting of non-empty sphere systems that either are cut or have a non simply connected basepoint component.
\end{definition}

In the above definition,  we consider $\sphere$ as a poset via its face poset, so $\geomPD$ is a certain subposet of $\sphere$ (which does not contain the empty simplex). In fact, we will see in \cref{prop:geomPD_simplicial} that $\geomPD$ is a sub\emph{complex} of $\sphere$, but this will not be needed for now.
In what follows, we usually denote general elements in $\geomPD$ by $\delta$, write $\tau$ for systems that are cut, and $\sigma$ for systems whose basepoint component is not simply connected.

Our goal is to prove the following theorem:

\begin{theorem}
\label{thm:he_geomPD_kfour}
There is an $\Aut(F_n)$-equivariant order-preserving map $\alpha: \sd \geomPD \to \Kfour$ that is a homotopy equivalence.
\end{theorem}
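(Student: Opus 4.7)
The plan is to construct an explicit $\Aut(F_n)$-equivariant order-preserving map $\alpha = (\alpha_1, \alpha_2)\colon \sd \geomPD \to \Kfour$ in parallel with the map $\varphi$ of \cref{thm:he_kfour}, and then apply Quillen's fiber theorem. Given a chain $x = (\delta_1 < \cdots < \delta_m) \in \sd\geomPD$, I partition $x = x_\sigma \sqcup x_\tau$ with $x_\sigma = \{\delta \in x : \pi_1(\boundcomp(\delta)) \neq 1\}$ and $x_\tau$ the cut systems in $x$. Because refining a sphere system only shrinks the basepoint component, and any submanifold of a simply connected $3$-manifold cut along $2$-spheres remains simply connected, the $\sigma$-systems precede the $\tau$-systems in every chain in $\geomPD$. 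Define
\[
\alpha_1(x) = \begin{cases} \pi_1(\boundcomp(\max x_\sigma)) & x_\sigma \neq \emptyset,\\ F_n & \text{otherwise},\end{cases}\qquad
\alpha_2(x) = \begin{cases} d(\min x_\tau) & x_\tau \neq \emptyset,\\ \emptyset & \text{otherwise},\end{cases}
\]
where for a cut system $\tau$, the decomposition $d(\tau)$ is obtained by taking $\pi_1$ of the submanifolds corresponding to the connected components of $\Gamma(\tau)\setminus\boundcomp(\tau)$; these are honest subgroups of $F_n$ (not merely conjugacy classes) because $\boundcomp(\tau)$ is simply connected. Both $\delta\mapsto \pi_1(\boundcomp(\delta))$ and $\tau\mapsto d(\tau)$ are order-reversing (larger sphere systems give smaller basepoint groups and finer decompositions), so the choices of $\max$ and $\min$ ensure order-preservation in the first and second coordinates of $\Kfour$ respectively: $\alpha_1$ extracts the smallest available $\pi_1(\boundcomp)$ along the chain and $\alpha_2$ the coarsest available decomposition, precisely paralleling $\varphi$.

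$\Aut(F_n)$-equivariance is immediate, and basis-compatibility of $(\alpha_1(x), \alpha_2(x))$ follows from the chain inequality $\max x_\sigma < \min x_\tau$: both outputs arise geometrically from $\min x_\tau$, with $\alpha_1(x)$ obtained by uncutting some spheres of $\min x_\tau$ adjacent to its basepoint component. They therefore admit a common graph-of-groups realization in $M_{n,1}$, which yields a common basis. Nonemptiness of $x$ guarantees $\alpha(x) \neq (F_n, \emptyset)$, so $\alpha$ lands in $\Kfour$.

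To show $\alpha$ is a homotopy equivalence, I apply Quillen's fiber theorem: for each $(A,d) \in \Kfour$, the lower fiber $\alpha^{-1}(\Kfour_{\leq(A,d)})$ equals $\sd Q$ where $Q = Q_\sigma \cup Q_\tau \subseteq \geomPD$ with
\[
Q_\sigma = \{\sigma\in \geomPD : \sigma \text{ is a } \sigma\text{-system and } \pi_1(\boundcomp(\sigma))\supseteq A\},\qquad Q_\tau = \{\tau\in \geomPD : \tau \text{ cut and } d(\tau)\leq d\};
\]
the $\max$- and $\min$-conditions propagate to all chain vertices via the monotonicities above. So it suffices to show $Q$ is contractible. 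Mirroring the proof of \cref{thm:he_kfour}, I split into cases. If $A = F_n$ then $d \in \Decomp$ and $Q_\sigma = \emptyset$ (because $\pi_1(\boundcomp(\sigma)) \subsetneq F_n$ for every nonempty $\sigma\in\sphere$), so $Q = Q_\tau$; if $d = \emptyset$ then $A \in \FC$ and $Q = Q_\sigma$. In each special case I construct a canonical sphere system realizing the data (a cut system $\tau_d$ with $d(\tau_d) = d$, respectively a $\sigma$-system $\sigma_A$ with $\pi_1(\boundcomp(\sigma_A)) = A$) and run a zig-zag contractibility argument through this anchor. In the remaining case $(A,d)\in \FC\times\Decomp$, basis-compatibility of $(A,d)$ produces a sphere system $\tau_{A,d}$ realizing both pieces of data simultaneously, and $Q$ is covered by two downward-closed subposets whose contractibility and contractibility of their intersection are established by zig-zag arguments anchored at $\tau_{A,d}$ and its subsystems, paralleling the combinatorial argument $s \leq f(s) \geq a_d$ in \cref{thm:he_kfour}.

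The main obstacle will be the geometric construction of the canonical sphere systems $\sigma_A$, $\tau_d$, $\tau_{A,d}$ and the careful verification that the zig-zag deformations stay within $Q$. This uses the standard Hatcher--Vogtmann-style tool that posets of sphere systems extending a fixed subsystem are contractible (a consequence of contractibility of Auter space), and requires careful bookkeeping around the reversed monotonicity between sphere-system containment and the refinement order on decompositions so that the zig-zags respect the downward-closed structure of $Q$.
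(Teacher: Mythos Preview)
Your overall strategy and the definition of $\alpha$ match the paper's, but there is a monotonicity error that breaks the map as written. You assert that $\tau \mapsto d(\tau)$ is order-reversing, i.e.\ that larger cut systems yield finer decompositions. The opposite holds: adding a sphere to a cut system $\tau$ can only \emph{merge} components of $|\Gamma(\tau)|\setminus\{\boundcomp(\tau)\}$ (a new sphere in $\boundcomp(\tau)\cong M_{0,k}$ separates it, and the non-basepoint piece becomes a new vertex joining several previously separate branches), so $\tau\subseteq\tau'$ implies $d(\tau)\le d(\tau')$ in the refinement order; this is \cref{it:tau_and_d_tau} in the paper. With the correct monotonicity your choice $\alpha_2(x)=d(\min x_\tau)$ is order-\emph{reversing} on chains, so $\alpha$ is not a poset map. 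The paper's definition is $\alpha_2(x)=\decompMap(\max x_\tau)$.

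The more serious gap is in the fiber contractibility. A zig-zag $\sigma \le \sigma\cup\sigma_A \ge \sigma_A$ to contract $Q_\sigma=\{\sigma:\pi_1(\boundcomp(\sigma))\supseteq A\}$ cannot work, because arbitrary $\sigma$ and $\sigma_A$ need not have disjoint representatives; $Q_\sigma$ is exactly Hatcher--Vogtmann's $\sphere_A$, whose contractibility (\cref{thm:S_A_contractible}) genuinely requires their surgery argument. In the general case the paper also does \emph{not} find two downward-closed subposets with contractible intersection paralleling the algebraic proof of \cref{thm:he_kfour}. Instead it first deformation-retracts $Q_\sigma\cup Q_\tau$ onto $(Q_\sigma\cap\st_\sphere(\tau_d))\cup Q_\tau$, using that both $Q_\sigma$ and $Q_\sigma\cap\st_\sphere(\tau_d)$ are contractible (the latter via a join decomposition of $\st_\sphere(\tau_d)$ and Hatcher--Vogtmann again, \cref{lem:F_2_cap_st_tau_d_contractible}) while keeping everything below $Q_\tau$ fixed; then it contracts the result by the monotone map $\delta\mapsto\delta\cup\tau_{d,A}$ (or $\delta\mapsto\delta\cup\tau_d$ when $\tau_{d,A}=\emptyset$). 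Verifying that these unions are valid sphere systems landing in the correct subposets is the technical heart of the argument (Lemmas~\ref{lem:min_sphere_system}--\ref{lem:F_2_cap_st_tau_d_contractible}), and your proposal does not engage with it.
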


Recall that the action of $\Aut(F_n)$ on $\geomPD$ comes from the action of the mapping class group. In order to define the map $\alpha: \sd \geomPD \to \Kfour$, we need to explain how we can associate free factors and decompositions of $F_n$ to sphere systems in $\geomPD$. The following lemma will let us map sphere systems whose basepoint component is not simply connected to the fundamental group of the basepoint component.

\begin{lemma}
\label{lm:sigma_to_factor}
Let $\sigma \in \sphere$ be a non-empty sphere system. Then
$$\pi_1(\boundcomp(\sigma),v_0) \to \pi_1(M,v_0)$$ is injective. Additionally,
\begin{equation*} 
    \pi_1(\sigma) := \im \pi_1(\boundcomp(\sigma),v_0) \subseteq \pi_1(M,v_0) = F_n
\end{equation*}
is a proper free factor. Furthermore, if $\sigma \subseteq\sigma'\in \sphere$, then $\pi_1(\sigma)\supseteq \pi_1(\sigma')$.
\end{lemma}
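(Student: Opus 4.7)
The plan is to extract a graph-of-groups decomposition of $F_n = \pi_1(M,v_0)$ from $\sigma$ and deduce all three assertions from it. I pick pairwise disjoint representatives $S_1,\ldots,S_r$ of the isotopy classes in $\sigma$ and let $N_0 = \boundcomp(\sigma), N_1,\ldots, N_s$ be the connected components of $M - \sigma$. The dual graph $\Gamma(\sigma)$ naturally carries a graph-of-groups structure with vertex groups $\pi_1(N_i,\ast_i)$ (for chosen basepoints $\ast_i \in N_i$ with $\ast_0 = v_0$) and \emph{trivial} edge groups, since each $S_j \cong S^2$ is simply connected. Applying Seifert--van Kampen (equivalently, Bass--Serre theory) then yields
\[
\pi_1(M,v_0) \cong \pi_1(N_0,v_0) \ast \pi_1(N_1,\ast_1) \ast \cdots \ast \pi_1(N_s,\ast_s) \ast F(\Gamma(\sigma)),
\]
where $F(\Gamma(\sigma))$ is free of rank equal to the first Betti number of $\Gamma(\sigma)$. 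This immediately implies that $\pi_1(N_0,v_0) \to \pi_1(M,v_0)$ is injective and that its image $\pi_1(\sigma)$ is a free factor of $F_n$.

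For properness I would show that, whenever $\sigma \neq \emptyset$, at least one of the other factors in the display above is non-trivial. Indeed, $\Gamma(\sigma)$ then has at least one edge, so either it contains a cycle (making $F(\Gamma(\sigma)) \neq 1$) or it is a tree with at least two vertices, which therefore has at least two leaves. In the latter case, pick a leaf $N_i$ with $i \neq 0$: the component $N_i$ meets $\sigma$ in a single sphere, and if $\pi_1(N_i) = 1$ then $N_i$ is a $3$-ball, so that bounding sphere is inessential in $M$, contradicting the definition of a sphere system. Hence $\pi_1(N_i) \neq 1$, and $\pi_1(\sigma) \subsetneq F_n$ in either case.

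For the monotonicity statement, $\sigma \subseteq \sigma'$ implies $M - \sigma' \subseteq M - \sigma$, so the path-component of $v_0$ in $M - \sigma'$ is contained in $\boundcomp(\sigma)$, yielding an inclusion of based subspaces $\boundcomp(\sigma') \hookrightarrow \boundcomp(\sigma)$. Passing to $\pi_1$ in the commutative triangle $\boundcomp(\sigma') \hookrightarrow \boundcomp(\sigma) \hookrightarrow M$ then gives $\pi_1(\sigma') \subseteq \pi_1(\sigma)$.

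The main technical obstacle is the properness step: the graph-of-groups description itself is standard (for instance via Laudenbach's work on sphere systems), but the verification requires carefully invoking both the essentiality and the non-boundary-parallelism of the spheres of $\sigma$ in the small case analysis just sketched. Once these conditions are used, the remaining assertions reduce to routine bookkeeping.
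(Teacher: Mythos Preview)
Your proposal is correct and follows the same route as the paper, which simply writes ``This is an easy consequence of van Kampen's theorem.'' You have supplied the details the paper omits: the graph-of-groups decomposition with trivial edge groups, the properness argument (handling the cycle and tree cases separately), and the monotonicity via inclusion of basepoint components. No substantive difference in approach.
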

\begin{proof}
    This is an easy consequence of van Kampen's theorem.
\end{proof}

The next lemma shows that we can associate to every $\tau \in \geomPD$ that is cut a non-trivial decomposition, that is, an element of $\Decomp$.

\begin{lemma}
\label{lm:tau_to_decomp}
Let $\tau \in \geomPD$ be cut.
If $C$ is a connected component of $|\Gamma(\tau)| \setminus \{ \boundcomp(\tau)\}$, write $\tau_C$ for the subset of spheres in $\tau$ that give the edges of $|\Gamma(\tau)| \setminus C$.
Then
\begin{equation}
\label{eq:defDecompTau}
\decompMap(\tau) := \{ \, \pi_1(\tau_C) \tq C \text{ is a connected component of }|\Gamma(\tau)|\setminus \{\boundcomp(\tau)\} \, \}
\end{equation}
is a non-trivial free factor decomposition of $F_n$.
\end{lemma}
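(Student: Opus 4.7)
The plan is to realize each $\pi_1(\tau_C)$ as the fundamental group of a concrete submanifold $M_C \subseteq M$, and then apply van Kampen's theorem to a clean cover $M = \bigcup_C M_C$ in which the pieces meet only in $\boundcomp(\tau)$.

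First I would unpack the hypothesis. Since $\tau$ is cut, $\boundcomp := \boundcomp(\tau)$ is simply connected, hence diffeomorphic to some $M_{0,k'}$; in particular it contains no non-separating $2$-sphere, so $\Gamma(\tau)$ has no loops at $\boundcomp$. Because $\boundcomp$ is a cut vertex, there are $k \geq 2$ components $C_1, \ldots, C_k$ of $|\Gamma(\tau)| \setminus \{\boundcomp\}$. Every edge of $\Gamma(\tau)$ is then either entirely inside some $C_i$ or joins $\boundcomp$ to a vertex of $C_i$; consequently, $\tau_{C_i}$ consists of all spheres of $\tau$ except those within $C_i$ and those joining $\boundcomp$ to $C_i$. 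It is non-empty, as it contains the spheres joining $\boundcomp$ to any $C_j$ with $j \neq i$, so by \cref{lm:sigma_to_factor} the subgroup $\pi_1(\tau_{C_i})$ is a proper free factor of $F_n$.

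Next I would define $M_{C_i}$ to be the closure in $M$ of $\boundcomp$ together with the components of $M \setminus \tau$ corresponding to the vertices of $C_i$ and tubular neighborhoods of the spheres within $C_i$ and from $\boundcomp$ to $C_i$. The bookkeeping above then yields $M = M_{C_1} \cup \cdots \cup M_{C_k}$ with $M_{C_i} \cap M_{C_j} = \boundcomp$ for $i \neq j$. Moreover, since cutting $M$ along $\tau_{C_i}$ only severs spheres not touching $C_i$, the piece $M_{C_i}$ is precisely the basepoint component of $M \setminus \tau_{C_i}$, so $\pi_1(\tau_{C_i})$ is the image of $\pi_1(M_{C_i}, v_0)$ in $F_n$. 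Iterated van Kampen, applicable because $\boundcomp$ is path-connected and simply connected, now gives
\[ F_n = \pi_1(M_{C_1}, v_0) \ast \cdots \ast \pi_1(M_{C_k}, v_0) = \pi_1(\tau_{C_1}) \ast \cdots \ast \pi_1(\tau_{C_k}), \]
so $\decompMap(\tau)$ generates $F_n$ as a free product.

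The main obstacle is showing each factor $\pi_1(\tau_{C_i})$ is non-trivial, so that $\decompMap(\tau)$ lies in $\Decomp(F_n)$ (rather than being a free product with some trivial factors). Applying Bass--Serre theory to the subgraph of $\Gamma(\tau)$ induced on $\{\boundcomp\} \cup C_i$ gives
\[ \pi_1(M_{C_i}) \;\cong\; \bigl(\ast_{v \in C_i} \pi_1(v)\bigr) \ast F_\beta, \]
with $\beta$ the first Betti number of that subgraph. Triviality would thus force every $v \in C_i$ to be simply connected (i.e.\ $v \cong M_{0,k'_v}$) and the subgraph to be a tree. Since a degree-one simply connected vertex would force its unique boundary sphere to bound a disk, we have $k'_v \geq 2$ for such $v$, so $v$ has degree at least $2$ in $\Gamma(\tau)$. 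Setting $e_i$ for the number of edges inside $C_i$ and $b_i \geq 1$ for the number of edges from $\boundcomp$ to $C_i$, the degree inequality at vertices of $C_i$ reads $2e_i + b_i \geq 2|C_i|$, while the tree hypothesis demands $e_i + b_i = |C_i|$; together these force $b_i \leq 0$, a contradiction. Hence every $\pi_1(\tau_{C_i})$ is non-trivial, and $\decompMap(\tau)$ is a non-trivial free factor decomposition of $F_n$.
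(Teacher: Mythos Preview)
Your argument is correct and follows the same approach as the paper: both define $M_{C_i}$ as the basepoint component of $M - \tau_{C_i}$, observe that these cover $M$ with pairwise intersection the simply connected $\boundcomp(\tau)$, and invoke van Kampen. You go further than the paper by explicitly verifying that each $\pi_1(\tau_{C_i})$ is non-trivial via the valence/tree counting argument; the paper's proof simply asserts the conclusion after van Kampen and leaves this point implicit.
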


\begin{proof}
Let $C$ be a connected component of $|\Gamma(\tau)|\setminus \{\boundcomp(\tau)\}$ and let $M_C = M_0(\tau_C)$ be the basepoint component of $M -\tau_C$.
For $C\neq C'$, note that $M_C\cap M_{C'} = \boundcomp(\tau)$.
The $M_C$ give a covering of $M$ (which can be made open by removing the boundary components of each $M_C$ that do not contain the basepoint), each $M_C$ contains the basepoint $v_0$, and they pairwise intersect in a simply connected subspace. Hence by van Kampen's theorem, we conclude that $\decompMap(\tau)$ is a non-trivial free factor decomposition of $F_n$.
See \cref{fig:MCcovering}.
\begin{figure}
\includegraphics{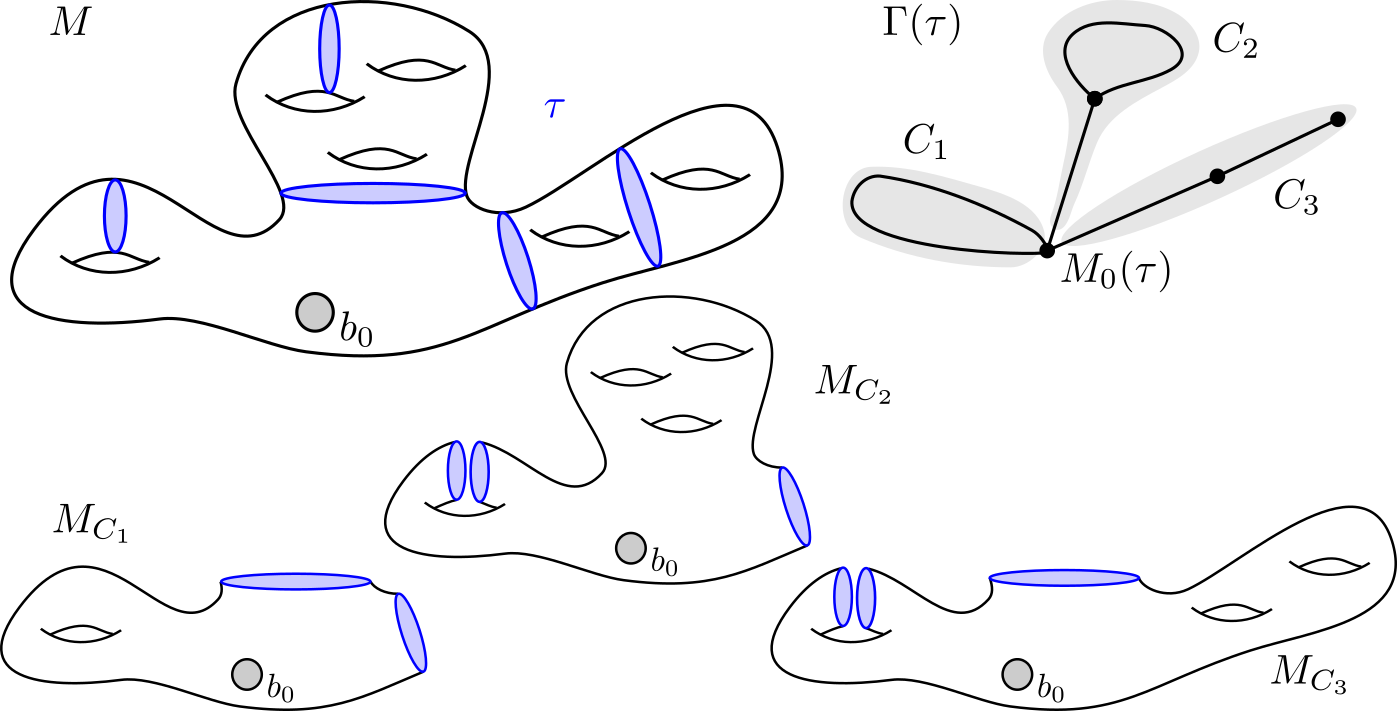}
\caption{Top left and right: A sphere system $\tau\in \geomPD$ (in blue) that is cut and the graph $\Gamma(\tau)$ with $C_1$, $C_2$, $C_3$ the connected components of $|\Gamma(\tau)|\setminus \{\boundcomp(\tau)\}$. Below: The open cover $\ls M_{C_1}, M_{C_2}, M_{C_3}\rs$ from the proof of \cref{lm:tau_to_decomp}.}
\label{fig:MCcovering}
\end{figure}
\end{proof}

\begin{corollary}
\label{it:tau_and_d_tau}
Let $\tau, \tau'$ be two cut sphere systems with $\tau\subseteq\tau'$. Then $\decompMap(\tau)\leq \decompMap(\tau')$.
Furthermore, if they have the same spheres adjacent to the basepoint component, then $\decompMap(\tau) = \decompMap(\tau')$.
\end{corollary}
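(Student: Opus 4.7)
My plan is to relate the dual graphs $\Gamma(\tau)$ and $\Gamma(\tau')$ via a natural projection map and then deduce the statements by a van Kampen argument exploiting the simple connectivity of the basepoint components. Since $\tau \subseteq \tau'$, every connected component of $M\setminus \tau'$ is contained in a unique component of $M\setminus \tau$, which gives a map $p\colon V(\Gamma(\tau'))\to V(\Gamma(\tau))$. In particular $p(\boundcomp(\tau')) = \boundcomp(\tau)$, and the preimage $\widetilde{\boundcomp} := p^{-1}(\boundcomp(\tau))$ is the subdivision of $\boundcomp(\tau)$ by the spheres of $\tau'\setminus \tau$ lying inside $\boundcomp(\tau)$. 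Because $\boundcomp(\tau)$ is simply connected, van Kampen forces the induced subgraph of $\Gamma(\tau')$ on $\widetilde{\boundcomp}$ to be a tree. By the analogous argument applied to each branch, for every component $C$ of $|\Gamma(\tau)|\setminus\{\boundcomp(\tau)\}$ the preimage $p^{-1}(C)$ is connected in $\Gamma(\tau')$ and disjoint from $\widetilde{\boundcomp}$, so $p^{-1}(C)$ sits inside a unique component $q(C)$ of $|\Gamma(\tau')|\setminus\{\boundcomp(\tau')\}$; this defines a map $q$ between the two sets of components, which need not be injective in general.

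The key geometric claim, and what I expect to be the main technical obstacle, is that for every $C$ the piece $M_{q(C)}\cap \boundcomp(\tau)$, equal to $\boundcomp(\tau')$ together with the $w$-pieces for $w\in \widetilde{\boundcomp} \cap q(C)$, is simply connected. Connectivity follows because every component $W$ of $\widetilde{\boundcomp}\setminus\{\boundcomp(\tau')\}$ is adjacent to $\boundcomp(\tau')$ in the tree $\widetilde{\boundcomp}$, and whenever $W$ meets $q(C)$ we must have $W\subseteq q(C)$ (since $q(C)$ is an entire connected component of $|\Gamma(\tau')|\setminus\{\boundcomp(\tau')\}$). Simple connectivity then follows from the fact that each constituent piece is simply connected and $\widetilde{\boundcomp}$ is a tree.

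With this in hand, van Kampen applied to $M_{q(C)}\cup \boundcomp(\tau)$ along the simply connected intersection $M_{q(C)}\cap \boundcomp(\tau)$ will imply that the inclusion $M_{q(C)}\hookrightarrow M_{q(C)}\cup \boundcomp(\tau)$ induces an isomorphism on $\pi_1$. Since $V(C)\subseteq q(C)$ through $p^{-1}$, we have $M_C \subseteq M_{q(C)}\cup \boundcomp(\tau)$, and composing with the inclusion into $M$ while invoking \cref{lm:sigma_to_factor} yields $\pi_1(\tau_C)\subseteq \pi_1(\tau'_{q(C)})$ as subgroups of $F_n$, establishing $\decompMap(\tau)\leq \decompMap(\tau')$. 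For the ``furthermore'' statement, I would argue that the equality of spheres adjacent to the basepoint component forces every sphere of $\tau'\setminus \tau$ to lie outside $\boundcomp(\tau)$: a new essential sphere inside the simply connected $\boundcomp(\tau)$ would necessarily detach some $\tau$-sphere from the new basepoint component. Hence $\widetilde{\boundcomp}$ is a single vertex, $\boundcomp(\tau)=\boundcomp(\tau')$, the map $q$ becomes a bijection, and $M_C = M_{q(C)}$ as subsets of $M$, giving $\decompMap(\tau)=\decompMap(\tau')$.
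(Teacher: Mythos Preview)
Your argument is correct and complete, but it follows a genuinely different route from the paper's. The paper proves the first assertion inductively, adding one sphere $[S]\in\tau'\setminus\tau$ at a time to $\tau$ and checking case-by-case what happens to the decomposition: if $S$ lies outside $\boundcomp(\tau)$ the components of $|\Gamma(\tau)|\setminus\{\boundcomp(\tau)\}$ are unchanged, while if $S$ lies inside $\boundcomp(\tau)\cong M_{0,k}$ it merges the free factors corresponding to the boundary spheres on the non-basepoint side of $S$. You instead work globally via the edge-collapse map $\Gamma(\tau')\to\Gamma(\tau)$, pushing components forward through a set map $q$ and then invoking van Kampen with the key observation that $M_{q(C)}\cap\boundcomp(\tau)$ is simply connected. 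Your approach packages the combinatorics more cleanly (no case analysis, and the map $q$ makes the refinement relation visible), at the cost of a slightly more delicate topological claim about the intersection. One small point: when you write that $p^{-1}(C)$ is connected ``by the analogous argument applied to each branch'', note that this does not use simple connectivity of the branch pieces---it only uses that the edge-collapse map has connected point-preimages, which is immediate. For the ``furthermore'' clause, the paper again argues differently: it shows the stronger fact that $\decompMap(\tau)=\decompMap(\tau_0)$ where $\tau_0\subseteq\tau$ is the set of basepoint-adjacent spheres, so equal adjacent-sphere sets immediately give equal decompositions. Your route (showing $\widetilde{\boundcomp}$ collapses to a point, hence $\boundcomp(\tau)=\boundcomp(\tau')$ and $q$ is a bijection) is equally valid and arguably more direct for the statement at hand.
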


\begin{proof}
We show that adding spheres to $\tau$ can merge connected components in the corresponding dual graph, thus mapping to a coarser full decomposition in view of \cref{lm:tau_to_decomp}:
Let $[S]\in \tau'\setminus \tau$, so $[S]\in \lk_\sphere(\tau)$.
As explained at the end of \cref{sec:geom_complexes}, we can view $\sphere(\boundcomp(\tau))$ as a subcomplex of $\lk_\sphere(\tau)$.
First, assume that $\ls [S]\rs \in \lk_\sphere(\tau)$ is not contained in $\sphere(\boundcomp(\tau))$. In this case, the dual graph of $\tau\cup \{[S]\}$ has the same number of connected components as $\Gamma(\tau)$ after removing the basepoint component from its geometric realization.
On the other hand, if $\ls [S]\rs  \in \sphere(\boundcomp(\tau)) \subseteq \lk_{\sphere}(\tau)$, then $\boundcomp(\tau) - \{[S]\}$ has two components that each contain at least two boundary components. (This is true because $\tau$ is cut, so $M_0(\tau)\cong M_{0,k}$ for some $k\geq 2$, see \cref{fig:growingTau}.)
In this case, $\decompMap(\tau \cup \{[S]\})$ is obtained from $\decompMap(\tau)$ by merging those free factors that lie on the side of $S \subseteq \boundcomp(\tau)$ that does not include the boundary component of $M$. Such a merge can be trivial, see the orange sphere in \cref{fig:growingTau}, but it always leads to a decomposition that is equal to $\decompMap(\tau)$ or coarser.

\begin{figure}
\includegraphics{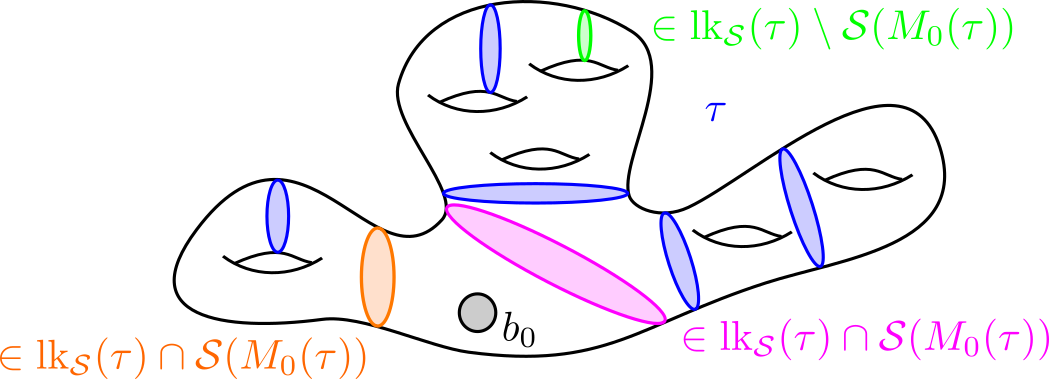}
\caption{The sphere system $\tau$ (in blue) from \cref{fig:MCcovering}, and additional spheres (green, orange and pink) in the link of $\tau$. Adding the green or the orange sphere to $\tau$ gives rise to the same decomposition, while adding the pink sphere gives rise to a coarser decomposition.}
\label{fig:growingTau}
\end{figure}

For the Furthermore part, let $\tau_0\subseteq \tau$ be the set of spheres that are adjacent to the basepoint component. Then $\Gamma(\tau_0)$ is obtained from $\Gamma(\tau)$ by collapsing all edges that are not adjacent to the basepoint vertex $M_0(\tau)$.
This implies that every component $C$ of $|\Gamma(\tau)| \setminus \{ \boundcomp(\tau)\}$ corresponds to a unique component $C_0$ of $|\Gamma(\tau_0)|\setminus \{M_0(\tau_0)\}$ such that $(\tau_0)_{C_0} = \tau_C\cap \tau_0$.
It is also not hard to see that $\pi_1(\tau_C\cap \tau_0) = \pi_1(\tau_C)$ because $\tau_C\cap \tau_0$ are the spheres of $\tau_C$ adjacent to the basepoint component. Putting this together, we get $\decompMap(\tau) = \decompMap(\tau_0)$. So if $\tau_0 = \tau'_0$, then also $\decompMap(\tau) = \decompMap(\tau')$. 
\end{proof}

We are now ready to define the map $\alpha$ from \cref{thm:he_geomPD_kfour}.
For $x\in \sd\geomPD$, write $x_s = \{\sigma\in x\tq \pi_1(\sigma)\neq 1\}$ and $x_t = \{\tau \in x\tq \tau \text{ is cut}\}$.
We define a map $\alpha$ by
\begin{equation}\label{eq:alpha_def}
\begin{aligned}
\alpha : \sd \geomPD &\longrightarrow \Kfour \\
x &\longmapsto \bigl( \pi_1(\max x_s),\, \decompMap(\max x_t) \bigr).
\end{aligned}
\end{equation}
Here, if $x_t = \emptyset$, we set $\decompMap(\max x_t) = \emptyset$, and for $x_s = \emptyset$, we set $\pi_1(x_s) = F_n$.

\begin{lemma}
\label{lem_alpha_poset_map}
The map $\alpha$ is a well-defined order-preserving map of posets that is $\Aut(F_n)$-equivariant.
\end{lemma}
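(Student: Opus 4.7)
The plan is to split the proof into three parts: well-definedness of $\alpha$, order-preservation, and $\Aut(F_n)$-equivariance. The latter two follow quickly from \cref{lm:sigma_to_factor} and \cref{it:tau_and_d_tau}, so the main effort will be well-definedness. I would first observe that the two defining conditions of $\geomPD$ are mutually exclusive: if $\delta\in \geomPD$ is cut, then $\boundcomp(\delta)$ is simply connected and so $\pi_1(\delta) = 1$. Hence every non-empty $x \in \sd \geomPD$ decomposes as a disjoint union $x = x_s \sqcup x_t$ with at least one part non-empty. When $x_s = \emptyset$ (respectively, $x_t = \emptyset$), the image $\alpha(x)$ lies in the third (respectively, second) piece of the definition of $\Kfour$ by \cref{lm:tau_to_decomp} (respectively, \cref{lm:sigma_to_factor}).

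The main case is when both $x_s$ and $x_t$ are non-empty; set $\sigma := \max x_s$ and $\tau := \max x_t$. I would first show $\sigma \subsetneq \tau$: since $x$ is a chain, $\sigma$ and $\tau$ are comparable, and the option $\tau \subseteq \sigma$ can be ruled out because adding spheres to the cut system $\tau$ can only further subdivide the simply connected basepoint component $\boundcomp(\tau) \cong M_{0,k}$ into smaller simply connected pieces (any embedded sphere in a simply connected 3-manifold that neither bounds a ball nor is boundary-parallel is separating), which would force $\pi_1(\sigma) = 1$ and contradict $\sigma \in x_s$. It then remains to show $\{\pi_1(\sigma)\}\cup \decompMap(\tau) \in \CB$. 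Via \cref{lem:free_factors_decompositions_common_bases}, this reduces to verifying $I_{\pi_1(\sigma), \decompMap(\tau)} \in \Decomp(\pi_1(\sigma)) \cup \{\{\pi_1(\sigma)\}\}$. My approach is to let $T \subseteq \Gamma(\tau)$ be the connected subgraph consisting of all vertices reachable from $\boundcomp(\tau)$ using edges in $\tau \setminus \sigma$, together with those edges. Then $\boundcomp(\sigma)$ is obtained by regluing the components of $M-\tau$ indexed by the vertices of $T$ along the spheres indexed by the edges of $T$, so $\pi_1(\sigma)$ is the fundamental group of the graph of groups on $T$ with trivial edge groups. Because $\tau$ is cut and $\boundcomp(\tau)$ is a cut vertex of $\Gamma(\tau)$, any edge of $\Gamma(\tau)$ incident to a vertex in the branch $C_j$ lies in $\{\boundcomp(\tau)\}\cup C_j$; consequently $T = \bigcup_j T_j$ with $T_j := T \cap (\{\boundcomp(\tau)\}\cup C_j)$, and the $T_j$ meet pairwise only at the simply connected vertex $\boundcomp(\tau)$. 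This splitting yields $\pi_1(\sigma) = \ast_j \pi_1(T_j)$, and a standard Bass--Serre / graph-of-groups argument identifies $\pi_1(T_j)$ with $\pi_1(\sigma) \cap \pi_1(\tau_{C_j})$ inside $F_n$. Discarding trivial factors gives the required decomposition of $\pi_1(\sigma)$ by the non-trivial intersections $\pi_1(\sigma) \cap D$ for $D \in \decompMap(\tau)$.

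For order-preservation, $x \subseteq y$ in $\sd \geomPD$ implies $x_s \subseteq y_s$ and $x_t \subseteq y_t$, so $\max x_s \leq \max y_s$ and $\max x_t \leq \max y_t$ whenever defined. By \cref{lm:sigma_to_factor}, $\pi_1(\max x_s) \supseteq \pi_1(\max y_s)$, which is the correct direction in $(\FC^+)^{\op}$; by \cref{it:tau_and_d_tau}, $\decompMap(\max x_t) \leq \decompMap(\max y_t)$ in $\Decomp$. The edge cases with empty $x_s$ or $x_t$ are compatible with these relations via the conventions adopted in the definition of $\alpha$. For equivariance, the $\Aut(F_n)$-action on $\sphere$ arises from the action of $\MCG(M_{n,1})$, which preserves the basepoint component, the dual graph, and the identification $\pi_1(M, v_0)\cong F_n$; hence $\pi_1$ and $\decompMap$ are both $\Aut(F_n)$-equivariant, and therefore so is $\alpha$.

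The main obstacle will be the basis compatibility verification in the second paragraph: the content is that the cut-vertex structure of $\Gamma(\tau)$ combined with the simple connectivity of $\boundcomp(\tau)$ forces the decomposition of $\pi_1(\sigma)$ to align with the branches of $\Gamma(\tau)$. The argument above is reasonably clean once the graph-of-groups perspective is in place, but spelling out the precise identification of $\pi_1(T_j)$ with $\pi_1(\sigma) \cap \pi_1(\tau_{C_j})$ — realizing this intersection geometrically via the submanifold of $M$ corresponding to $T_j$ — may require some care.
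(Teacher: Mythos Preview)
Your proof is correct and takes essentially the same approach as the paper. The paper's argument for basis compatibility constructs the sphere systems $\sigma_D := \tau_{C_D}\cup \sigma$ and applies van Kampen directly to the covering $\{\boundcomp(\sigma_D)\}_{D\in d}$ of $\boundcomp(\sigma)$ (with pairwise intersection the simply connected $\boundcomp(\tau)$); your subgraphs $T_j$ are precisely the dual-graph incarnation of this covering, and your graph-of-groups framing is equivalent to that van Kampen step. You are in fact slightly more careful than the paper in explicitly justifying $\sigma\subsetneq \tau$, and the identification you flag as delicate, $\pi_1(T_j)=\pi_1(\sigma)\cap D_j$, is exactly the assertion ``$A\cap D=\pi_1(\sigma_D)$'' that the paper states without further comment.
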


\begin{proof}
From \cref{lm:sigma_to_factor} and \cref{it:tau_and_d_tau} it follows that
$\alpha(x) \in (\FC\cup \{F_n\})^{\op} \times (\Decomp \cup \{\emptyset\})$ for all $x\in \sd\geomPD$ and that $\alpha$ is order preserving.

We will now show that the image of $\alpha$ lies actually in $\Kfour$.
That is, for $x\in \sd\geomPD$, $\pi_1(\max x_s)$ and $\decompMap(\max x_t)$ have a common basis, and $\alpha(x)\neq (F_n,\emptyset)$.
Indeed, since $x\neq \emptyset$, either $x_s$ or $x_t$ is non-empty, and hence $\alpha(x) \neq (F_n,\emptyset)$.
By \cref{lem:free_factors_decompositions_common_bases}(2), we only need to prove that if $\sigma \subseteq \tau \in \geomPD$ where $\sigma$ has non simply connected basepoint component and $\tau$ is cut, we have $A = \gen{I_{A,d}}$, where $A := \pi_1(\sigma)$ and $d:= \decompMap(\tau)$.
For $D\in d$, let $C_D$ be the connected component of $|\Gamma(\tau)|\setminus \{M_0(\tau)\}$ such that $\pi_1(\tau_{C_D}) = D$ (see \cref{lm:tau_to_decomp}), and let $\sigma_D = \tau_{C_D} \cup \sigma$.
Then $A\cap D = \pi_1(\sigma_D)$.
Since $\boundcomp(\sigma)$ is covered by the submanifolds $\boundcomp(\sigma_D)$, $D\in d$, where two different of them intersect in a simply connected submanifold, we conclude that $A = \gen{A\cap D\tq D\in d}$, that is, $A = \gen{I_{A,d}}$.

Finally, it is clear that $\alpha$ is $\Aut(F_n)$-equivariant.
\end{proof}

Showing that $\alpha$ is a homotopy equivalence requires some more work and will be done in the next section.

\section{The map \texorpdfstring{$\alpha$}{alpha} is a homotopy equivalence}
\label{sec:alpha_equiv}

As in the previous section, we fix $n\geq 1$, write $M=M_{n,1}$ and $\sphere = \sphere(M)$.
We also fix an identification $\pi_1(M,v_0)\cong F_n$, and drop the subscript notation for $\geomPD = \geomPD_n$, $\FC = \FC_n$, and $\Decomp=\Decomp_n$.

The aim of this section is to show that the map $\alpha$ defined in \cref{eq:alpha_def} is a homotopy equivalence.
To prove this, we show that its fibers are contractible:

\begin{proposition}
\label{prop:alpha_he}
For all $(A,d)\in \Kfour$, the fibre $\alpha^{-1}\left(\Kfour_{\leq (A,d)}\right)$ is contractible.
\end{proposition}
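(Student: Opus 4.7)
The plan is to apply Quillen's fiber theorem to $\alpha$: it suffices to show each fiber is contractible. Unpacking the definition in \eqref{eq:alpha_def}, a chain $x \in \sd\geomPD$ lies in $\alpha^{-1}(\Kfour_{\leq (A,d)})$ precisely when every non-cut element $\sigma \in x$ satisfies $\pi_1(\sigma) \supseteq A$ and every cut element $\tau \in x$ satisfies $\decompMap(\tau) \leq d$. Hence the fiber equals $\sd(P)$ for the subposet $P \subseteq \geomPD$ defined by these pointwise conditions, and contractibility of $P$ is what is needed. Decompose $P = P_1 \cup P_2$ into its cut and non-cut elements. A short analysis yields the basic structural facts: $P_1 \cap P_2 = \emptyset$; $P_1$ is upward closed in $P$ (a sphere system in $\geomPD$ containing a cut one must itself be cut, since the basepoint component can only further subdivide the simply connected $\boundcomp(\tau)$); and $P_2$ is downward closed in $P$ (removing spheres from a non-cut system enlarges the basepoint component and so keeps $\pi_1 \supseteq A \neq 1$).

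I would first handle the boundary cases: if $A = F_n$ then $P = P_1$, and if $d = \emptyset$ then $P = P_2$. In both situations, contractibility can be obtained by identifying $P$ with a sphere-system complex in an appropriate submanifold of $M$ and invoking contractibility of the simplicial closure of Auter space, i.e.~a variant of Hatcher's theorem \cite{Hat:Homologicalstabilityautomorphism}. For the general case $A \neq F_n$, $d \neq \emptyset$, the compatibility $(A,d) \in \Kfour$ together with \cref{lem:free_factors_decompositions_common_bases} yields the decomposition $I_{A,d} = \{A \cap D : D \in d\}$ of $A$. I would use this to construct a reference cut system $\tau_0 \in P_1$ with $\decompMap(\tau_0) = d$ that contains a non-cut sub-system $\sigma_0 \subsetneq \tau_0$ with $\pi_1(\sigma_0) = A$, so that $\sigma_0 \in P_2$ and $\sigma_0 \subseteq \tau_0$ in $P$.

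The main step is to show that the inclusion $P_1 \hookrightarrow P$ is a homotopy equivalence, which, combined with contractibility of $P_1$ (analogous to the $A = F_n$ boundary case), yields contractibility of $P$. I would prove this via the upper-fiber version of Quillen's fiber theorem: for every $p \in P$ one verifies that $(P_1)_{\geq p} = \{\tau \in P_1 : p \subseteq \tau\}$ is contractible. When $p = \tau \in P_1$ this is immediate since $\tau$ is a minimum of $(P_1)_{\geq \tau}$. When $p = \sigma \in P_2$, the upper fiber consists of cut completions of $\sigma$ with $\decompMap \leq d$, i.e.~ways to refine $\sigma$ inside $\boundcomp(\sigma)$ (and in the other components of $M - \sigma$) to obtain a cut system whose decomposition bounds $d$. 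I would show contractibility of this fiber by a zig-zag argument in the spirit of the proof of \cref{thm:he_kfour}, using a natural intermediate sphere system built from the data $(\sigma, d)$ via $I_{\pi_1(\sigma), d}$ and the analogue of \cref{lem:free_factors_decompositions_common_bases} applied inside $\boundcomp(\sigma)$.

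The hardest step will be the verification of contractibility of the upper fibers over $P_2$. One has to interlock (i) the cutness condition on the extension, (ii) the decomposition bound $\decompMap(\tau) \leq d$, and (iii) the algebraic compatibility carried by $\Kfour$. Concretely, one needs to identify the set of cut refinements of $\sigma$ compatible with $d$ with a well-behaved subposet of a sphere complex in $\boundcomp(\sigma)$, and then show its contractibility, likely by iterating a Hatcher-style contractibility result for sphere systems together with a careful tracking of which factor of $d$ each added sphere contributes to. This is the geometric counterpart of the double-zig-zag used in the proof of \cref{thm:he_kfour}, and carrying it out in the sphere-system setting is where most of the technical work will lie.
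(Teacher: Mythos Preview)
Your identification of the fiber as $\sd(P_1 \cup P_2)$ with $P_1 = \Ftau$ the cut systems and $P_2 = \Fsigma$ the non-cut systems is correct, and $P_1$ is indeed upward closed in $P$. But the key step---proving that $P_1 \hookrightarrow P$ is an equivalence via the upper-fiber version of Quillen's Theorem~A---has a genuine gap: the upper fibers $(P_1)_{\geq \sigma}$ can be \emph{empty}. The paper's \cref{lem:F_1_comp_with_tau} shows that every $\tau \in \Ftau$ satisfies $\tau \cup \tau_d \in \sphere$, so $\Ftau \subseteq \st_\sphere(\tau_d)$; hence any face of such a $\tau$ also lies in $\st_\sphere(\tau_d)$. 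Consequently, if $\sigma \in \Fsigma$ is \emph{not} in $\st_\sphere(\tau_d)$---and such $\sigma$ certainly exist, since $\Fsigma = \sphere_A$ is the entire contractible Hatcher--Vogtmann complex while $\st_\sphere(\tau_d)$ is a proper subcomplex of $\sphere$---then there is no cut refinement $\tau \supseteq \sigma$ with $\decompMap(\tau) \leq d$ whatsoever. No ``intermediate sphere system built from $I_{\pi_1(\sigma),d}$'' can be produced, because the fiber you want to contract is empty.

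This is precisely why the paper does not attempt an upper-fiber argument. Instead it first deformation retracts $\Fsigma$ onto $\Fsigma' \coloneqq \Fsigma \cap \st_\sphere(\tau_d)$, using that both are contractible (\cref{thm:S_A_contractible} and \cref{lem:F_2_cap_st_tau_d_contractible}) together with Whitehead's theorem, and checks that the part of $\Fsigma$ lying below $\Ftau$ is already contained in $\Fsigma'$, so the retraction extends by the identity over $\Ftau$. Only then is a monotone poset map $\delta \mapsto \delta \cup \tau_{d,A}$ (or $\delta \mapsto \delta \cup \tau_d$ when the face $\tau_{d,A}$ is empty) available on all of $\Fsigma' \cup \Ftau$, producing a cone point. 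Two smaller issues in your outline: the reference subsystem $\sigma_0 \subsetneq \tau_0$ with $\pi_1(\sigma_0) = A$ need not exist---the maximal face $\tau_{d,A} \subseteq \tau_d$ with $\pi_1 \supseteq A$ is empty exactly when $A$ meets every factor of $d$ non-trivially, and the paper treats this case separately; and in the boundary case $A = F_n$, contractibility of $P_1 = \Ftau$ is not obtained by identifying it with a sphere complex of a submanifold, but again via the monotone map $\tau \mapsto \tau \cup \tau_d$ from \cref{lem:F_1_comp_with_tau}.
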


From now on, we fix $(A,d)\in \Kfour$, and write $d = \{D_1,\ldots,D_k\}$.
The following lemma is an important technical ingredient in the proof of \cref{prop:alpha_he}.

\begin{lemma}
\label{lem:min_sphere_system}
If $d\neq \emptyset$, then there is a unique $\tau_d\in \geomPD$ such that
\begin{enumerate}
\item $|\tau_d| = |d| = k$;
\item every sphere of $\tau_d$ is separating;
\item \label{it:comps_tau_d}$M-\tau_d$ has $k+1$ components. The basepoint component is a 3-ball with $k+1$ boundary components (one of which contains the basepoint). Every other component has exactly one boundary component, which corresponds to the sphere in $\tau_d$ that is adjacent to it and to the basepoint component; their fundamental groups are $D_1, \ldots, D_k$ under the identification $\pi_1(M)\cong F_n$. In particular, $\decompMap(\tau_d) = d$.
\end{enumerate}
\end{lemma}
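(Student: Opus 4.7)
The plan is to establish existence and uniqueness separately. Existence I would handle by a direct geometric construction from a basis of $F_n$ compatible with $d$; uniqueness I would reduce to a rigidity statement about separating spheres in $M_{n,1}$.

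For existence, since $d$ is a full free factor decomposition, there exists a basis $\mathcal{B}=\mathcal{B}_1\sqcup\cdots\sqcup\mathcal{B}_k$ of $F_n$ with $D_i=\gen{\mathcal{B}_i}$ for each $i$. I would model $M=M_{n,1}$ as a $3$-ball $B_0$ (with $v_0$ on its boundary) to which $n$ one-handles of the form $S^2\times[0,1]$ are attached, one per element of $\mathcal{B}$, so that the $n$ loops traversing these handles realize the basis $\mathcal{B}$ under the identification $\pi_1(M,v_0)\cong F_n$. After grouping the handles into $k$ bunches according to $\mathcal{B}=\mathcal{B}_1\sqcup\cdots\sqcup\mathcal{B}_k$ and arranging them disjointly around $v_0$, I enclose the $i$-th bunch inside a disjointly embedded separating $2$-sphere $S_i\subset M$ sitting just inside $B_0$. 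Setting $\tau_d=\ls[S_1],\ldots,[S_k]\rs$, by construction $M-\tau_d$ has exactly $k+1$ components: the basepoint component is a $3$-ball with $k+1$ boundary spheres, and for $i\geq 1$ the component $M_i$ is diffeomorphic to $M_{|\mathcal{B}_i|,1}$ with $\pi_1(M_i)=D_i$ by van Kampen's theorem. This yields properties (1)--(3), and in particular $\decompMap(\tau_d)=d$. Also $\tau_d\in\geomPD$ because $\boundcomp(\tau_d)$ is simply connected and is a cut vertex of the dual graph $\Gamma(\tau_d)$, which is a star with $k$ leaves.

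For uniqueness, suppose $\tau\in\geomPD$ satisfies (1)--(3). Then $\Gamma(\tau)$ is also a star with $k$ leaves, and by (3) the fundamental group of the $i$-th leaf component is some $D_{\pi(i)}$; relabeling, we may assume the free factor at the $i$-th leaf is $D_i$. The central ingredient I would invoke is the following rigidity statement: given a proper nontrivial free factor $D\leq F_n$, a separating $2$-sphere in $M_{n,1}$ whose non-basepoint complementary component has fundamental group equal to $D\subset\pi_1(M,v_0)$ is unique up to isotopy. This is a consequence of Laudenbach's theorem on the mapping class group of $M_{n,1}$ together with the fact that the stabilizer in $\Aut(F_n)$ of a free factor $D$ acts transitively on isotopy classes of such spheres, and is also implicit in Hatcher's work on sphere systems.

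The main obstacle will be promoting the individual rigidity of each separating sphere to a \emph{simultaneous} uniqueness of the whole system $\tau$, since an isotopy of one sphere could in principle pull the others out of position. I plan to handle this by induction on $k$: first isotope $S_1\in\tau$ into the standard position prescribed by the construction of $\tau_d$, then cut $M$ along a tubular neighborhood of $S_1$ to obtain the submanifold $M'=\boundcomp(\ls[S_1]\rs)$, which is diffeomorphic to $M_{n-\rk(D_1),2}$ and carries the restricted sphere system $\tau\setminus\{[S_1]\}$ realizing $\{D_2,\ldots,D_k\}$ as a decomposition of $\pi_1(M',v_0)$. Applying the inductive hypothesis in $M'$ then gives the full isotopy to $\tau_d$. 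Alternatively, one can put $\tau$ and $\tau_d$ into simultaneous normal position using Hatcher's theory and conclude directly.
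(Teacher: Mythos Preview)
Your existence argument is fine and essentially the paper's construction phrased by hand rather than via Hepworth's transitivity plus a Laudenbach lift.

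The uniqueness argument, however, rests on a rigidity statement that is not true as stated. You assert that a separating sphere in $M_{n,1}$ is determined up to isotopy by the free factor $D$ realized on its non-basepoint side. This fails already for $n=3$: with $F_3=\langle a,b,c\rangle$, the one-edge splittings $\langle a,b\rangle * \langle c\rangle$ and $\langle ac,b\rangle * \langle c\rangle$ correspond to non-isotopic separating spheres (the basepoint-side factors $\langle a,b\rangle$ and $\langle ac,b\rangle$ are not conjugate, so the associated Bass--Serre trees are distinct), yet both have non-basepoint side with fundamental group $\langle c\rangle$. What \emph{does} determine the sphere is the ordered pair of subgroups $(D',D)$ with $F_n=D'*D$, where $D'=\pi_1(\boundcomp(\{[S]\}),v_0)$ is the basepoint-side factor. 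In the situation of the lemma one has $D_i'=D_1*\cdots*\widehat{D_i}*\cdots*D_k$, so both members of the pair are indeed determined by $d$; once you use the pair, each $[S_i]$ is pinned down individually and the induction is unnecessary. The induction as written also has the wrinkle that after cutting along $S_1$ you land in $M_{n-n_1,2}$, not $M_{n',1}$, so the hypothesis does not literally apply; this is fixable by capping off, but becomes moot once the corrected rigidity is in place.

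For comparison, the paper avoids the per-sphere analysis entirely: given two candidates $\tau_d,\theta_d$, it takes a diffeomorphism carrying one to the other, observes that the induced automorphism preserves each $D_i$ and hence (by Hepworth) lies in $\Aut(D_1)\times\cdots\times\Aut(D_k)$, and then notes that this product lifts to diffeomorphisms supported in the non-basepoint pieces, which fix every sphere of $\tau_d$. This gives uniqueness in one stroke, essentially packaging your corrected single-sphere rigidity for all $i$ simultaneously.
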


\begin{proof}
Let $n_i$ denote the rank of $D_i$. View $M = M_{n,1}$ as $M_{0,k+1}$ with $M_{n_1,1}, \ldots, M_{n_k,1}$  glued along $k$ out of the $k+1$ boundary spheres (the remaining one having the basepoint). Call these boundary spheres $S_1,\ldots,S_k$. For $i=1,\ldots,k$, let $B_i$ denote the fundamental group of the complement of $S_1,\ldots,\hat{S_i},\ldots,S_k$. Then work of Hepworth \cite[Proposition 3.5]{Hepworth} implies that the set of ordered decompositions of $F_n$ into free factors of ranks $n_1,\ldots,n_k$ is equivariantly isomorphic to the set
\[\Aut(F_n)/(\Aut(F_{n_1}) \times \cdots \times \Aut(F_{n_k}) ).\]
This is a transitive $\Aut(F_n)$-set, and hence, there exists $f \in \Aut(F_n)$ with $f(B_i)=D_i$ for all $i$. Recall that the $\Aut(F_n)$ action on $\sphere$ comes from the action of $\Diff(M)$ on $\pi_1(M) \cong F_n$ and that Laudenbach \cite{Laudenbach} proved that $$\Diff(M) \to \Aut(F_n)$$ is surjective (see \cref{sec:geom_complexes}). By lifting $f$ to a diffeomorphism of $M$, it follows that $\tau_d=\{f([S_1]),\ldots,f([S_d])\}$ has the desired properties since $\ls [S_1],\ldots,[S_k] \rs$ does with $D_i$ replaced by $B_i$. 

Now we will argue that $\tau_d$ is unique. We may assume that $\tau_d=\ls [S_1],\ldots,[S_k] \rs $ and that $M$ is obtained from $M_{0,k+1}$ by attaching $M_{n_i,1}$ along $S_i$ for $i=1,\ldots,k$. Let $\theta_d= \ls [T_1],\ldots,[T_k] \rs$ be another sphere system with the desired properties. Since $M - \tau_d$ and $M - \theta_d$ are homeomorphic $3$-manifolds with boundary, there exists a diffeomorphism $\tilde f: M \to M $ with $\tilde f(S_i)=T_i$ for all $i$. Let $f \in \Aut(F_n)$ be the induced map on the fundamental group.
Let $\tau_d^i=\tau_d \setminus \ls [S_i]\rs$ and let $\theta_d^i=\theta_d \setminus \ls [T_i]\rs$. Since
\[ D_i =\pi_1(M - \tau_d^i  )=\pi_1(M - \theta_d^i)\]
(cf.~\cref{lm:tau_to_decomp}), 
we have $f(D_i)=D_i$ for all $i$. By \cite[Proposition 3.5]{Hepworth}, $f$ is in the image of
\[\Aut(D_1) \times \cdots \times  \Aut(D_k) \to \Aut(F_n).\]
View $M_{n_i,1}$ as the non-basepoint component of $M \setminus S_i$. The inclusion
\[\Aut(D_i) \to \Aut(F_n)\]
coming from the decomposition
\[ F_n=D_1 * \cdots * D_k\]
lifts to the extension by the identity map
\[\Diff(M_{n_i,1}) \to \Diff(M).\]
By definition, elements of these diffeomorphism groups fix the boundary. Since the boundary of $M_{n_i,1}$ is $S_i$, the $\Aut(D_i)$ action on $\sphere$ fixes $S_i$. Thus $f$ fixes $\tau_d$. Hence $\tau_d=\theta_d$.
\end{proof}

We define the following subposets of $\geomPD$ that will be used to show that all fibers of $\alpha$ are contractible.
\begin{align}
\begin{split}
    \label{eq:def_F_1_F_2}
    \Ftau &\coloneqq \{\tau \in \sphere \tq \tau \text{ is cut and } \decompMap(\tau)\leq d\}, \\
    \Fsigma &\coloneqq \{\sigma \in \sphere \tq \pi_1(\sigma)\supseteq A\},\quad \text{ and }\\
    \Fsigma'& \coloneqq \Fsigma\cap \st_\sphere(\tau_d),
\end{split}
\end{align}
where $\st_\sphere(\tau_d)$ denotes the star of the simplex $\tau_d$ in the sphere system complex $\sphere$.
Here $\Fsigma$ and $\Fsigma'$ are subcomplexes of $\sphere$ that we will usually regard as subposets of $\geomPD$.
In the next two subsections, we will study properties of these posets that will afterwards be used to finish the proof of \cref{prop:alpha_he}.

\subsection{The poset \texorpdfstring{$\Ftau$}{Fcut}}

We start with the following lemma, which says that being cut is an upward-closed property in $\sphere$.

\begin{lemma}
\label{lem:cut_vertex_upwards_closed}
    If $\tau\in \sphere$ is cut and $\tau'\supseteq \tau$ is a sphere system containing $\tau$, then $\tau'$ is cut.
\end{lemma}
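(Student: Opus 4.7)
The plan is to verify the two defining conditions for $\tau'$ to be cut: that $\boundcomp(\tau')$ is simply connected and that $\boundcomp(\tau')$ is a cut vertex of the dual graph $\Gamma(\tau')$.

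First, I would establish simple connectivity. Let $\rho \subseteq \tau'\setminus\tau$ consist of the newly-added spheres admitting representatives lying inside $\boundcomp(\tau)$; the remaining spheres of $\tau'\setminus\tau$ live in the other components of $M-\tau$. Then $\boundcomp(\tau')$ is the connected component of $\boundcomp(\tau)-\rho$ containing the basepoint $v_0$. Since $\boundcomp(\tau)\cong M_{0,k+1}$ is simply connected by hypothesis, cutting along embedded $2$-spheres yields only simply connected components (each again diffeomorphic to some $M_{0,l+1}$), so $\boundcomp(\tau')$ is simply connected.

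Second, for the cut-vertex condition, I would analyze $\Gamma(\tau')$ in terms of $\Gamma(\tau)$: the dual graph $\Gamma(\tau')$ is obtained from $\Gamma(\tau)$ by ``blowing up'' each vertex $N$ into the dual graph $\Gamma_N$ of the spheres of $\tau'\setminus\tau$ with representatives in $N$, and then attaching each edge of $\Gamma(\tau)$ that was incident to $N$ to the unique vertex of $\Gamma_N$ corresponding to the side of the sphere in question. In particular, $\boundcomp(\tau)$ is replaced by $\Gamma_{\boundcomp(\tau)}$, of which $\boundcomp(\tau')$ is a distinguished vertex. Since $\boundcomp(\tau)$ is a cut vertex of $\Gamma(\tau)$, the $\tau$-spheres incident to $\boundcomp(\tau)$ partition non-trivially according to the connected components $\mathcal{C}_1, \ldots, \mathcal{C}_r$ with $r \geq 2$ of $|\Gamma(\tau)|\setminus\{\boundcomp(\tau)\}$. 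My approach would be to argue that this partition persists after refinement, so that $|\Gamma(\tau')|\setminus\{\boundcomp(\tau')\}$ is still disconnected.

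The main obstacle is controlling how the refinement $\Gamma_{\boundcomp(\tau)}$ interacts with the pre-existing cut structure. Adding new spheres inside $\boundcomp(\tau)$ introduces extra vertices in $\Gamma(\tau')$ that remain in $|\Gamma(\tau')|\setminus\{\boundcomp(\tau')\}$, and these can in principle create paths between the refined versions of the $\mathcal{C}_i$ that bypass $\boundcomp(\tau')$. The crux is therefore to show that removing only the single vertex $\boundcomp(\tau')$, rather than the entire subgraph $\Gamma_{\boundcomp(\tau)}$, still disconnects; this will rely on a careful bookkeeping of which $\tau$-spheres adjacent to $\boundcomp(\tau)$ become adjacent specifically to $\boundcomp(\tau')$ (versus to other vertices of $\Gamma_{\boundcomp(\tau)}$) after refinement, together with the simple connectivity established in the first step.
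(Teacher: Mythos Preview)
Your first step is fine and matches the paper. For the cut-vertex step, you have correctly isolated the genuine difficulty---new vertices arising from spheres of $\tau'\setminus\tau$ inside $\boundcomp(\tau)$ may create paths in $|\Gamma(\tau')|\setminus\{\boundcomp(\tau')\}$ bypassing the basepoint---but you do not resolve it, and in fact it cannot be resolved: the lemma as stated is false. For a counterexample take $M=M_{3,1}$ and $\tau=\{b_1,b_2,b_3\}$ with $\Gamma(\tau)$ consisting of $\boundcomp(\tau)\cong M_{0,4}$ joined by a double edge $b_1,b_2$ to a vertex $P_1\cong M_{1,2}$ and by a single edge $b_3$ to $P_2\cong M_{1,1}$. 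Then $\tau$ is cut. Now let $S\subset\boundcomp(\tau)$ be the essential sphere separating $\{\partial M,b_1\}$ from $\{b_2,b_3\}$ and set $\tau'=\tau\cup\{[S]\}$. One checks that $S$ is non-separating in $M$ and not isotopic to any $b_i$, so $\tau'$ is a genuine sphere system containing $\tau$. The basepoint component $\boundcomp(\tau')\cong M_{0,3}$ is simply connected, yet it is \emph{not} a cut vertex: the other piece $N$ of $\boundcomp(\tau)\sm S$ is adjacent to both $P_1$ (via $b_2$) and $P_2$ (via $b_3$), so $|\Gamma(\tau')|\setminus\{\boundcomp(\tau')\}$ is connected through $N$---precisely the bypassing path you anticipated.

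For comparison, the paper argues differently: it notes that $\Gamma(\tau)$ is obtained from $\Gamma(\tau')$ by edge collapses and asserts that if $|\Gamma(\tau')|\setminus\{\boundcomp(\tau')\}$ is connected then so is $|\Gamma(\tau)|\setminus\{\boundcomp(\tau)\}$. That implication is valid when the collapsed edge is not incident to $\boundcomp(\tau')$ (the collapse then restricts to a surjection of the punctured realizations), but fails when it is, as the same counterexample (collapse the edge $S$) shows. So both approaches hit the same obstruction. In the paper's actual uses (\cref{lem:F_1_comp_with_tau} and \cref{lem:tau_d_A_empty}) the conclusion can be checked directly, since there one identifies exactly which spheres are adjacent to the new basepoint component.
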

\begin{proof}
    First observe that by \cref{lm:sigma_to_factor}, the basepoint component of $\tau'$ has trivial fundamental group.
    To see that it defines a cut vertex of $\Gamma(\tau')$, we use that $\Gamma(\tau)$ is obtained from $\Gamma(\tau')$ by a sequence of edge collapses.
    Hence, it suffices to note the following: 
    Let $(G',v'),(G,v)$ be finite graphs with marked vertices $v',v'$ and assume that $G$ is obtained from $G'$ by collapsing an edge $e\in G'$, such that the collapsing map $f:G'\to G$ sends $v'$ to $v$. Then if $v'$ is not a cut vertex of $G'$, also $v$ is not a cut vertex of $G$. In other words: If $|G'|-v'$ is connected, then so is $|G|-v$. This follows from the observation that collapsing an edge in a connected graph again yields a connected graph.
\end{proof}

Recall from \cref{sec:geom_complexes} that if $\delta\in \sphere$ and $N$ is a connected component of $M-\delta$, then $\sphere(N) \subseteq \sphere$.

\begin{lemma}
\label{lem:F_1_comp_with_tau}
If $d\neq \emptyset$, then for all $\tau\in \Ftau$ and $\tau'\subseteq \tau_d$, we have $\tau\cup \tau'\in \Ftau$.
\end{lemma}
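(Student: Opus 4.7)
My plan is to invoke the uniqueness statement of \cref{lem:min_sphere_system} to produce a representative of $\tau_d$ sitting entirely inside the basepoint component $\boundcomp(\tau)$. Once such a representative is at hand, $\tau \cup \tau'$ will automatically lie in $\sphere$ for every $\tau' \subseteq \tau_d$, and the two defining conditions of $\Ftau$ can then be checked separately.

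For the construction, I would start by noting that, since $\tau$ is cut, $\boundcomp(\tau)$ is a simply connected compact orientable $3$-manifold with spherical boundary, hence diffeomorphic to $M_{0,k'}$ for some $k'$, that is, a $3$-ball with additional boundary $2$-spheres. The hypothesis $\decompMap(\tau) \leq d$ assigns to each connected component $C$ of $|\Gamma(\tau)| \setminus \{\boundcomp(\tau)\}$ a unique $D_i \in d$ containing $\pi_1(\tau_C)$, thereby partitioning the $\tau$-spheres adjacent to the basepoint vertex into blocks $B_1, \ldots, B_k$. Each $B_i$ is non-empty because the refinement property together with both $\decompMap(\tau)$ and $d$ being full decompositions forces $D_i = \ast_{E \in \decompMap(\tau),\, E \subseteq D_i} E$. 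Inside the simply connected $\boundcomp(\tau)$ one can then choose pairwise disjoint separating $2$-spheres $S_1', \ldots, S_k'$ so that $S_i'$ separates the spheres of $B_i$ from the remaining boundary of $\boundcomp(\tau)$, in particular from $\partial M$. By van Kampen's theorem, the component of $M \setminus \{S_1', \ldots, S_k'\}$ on the $B_i$-side of $S_i'$ has fundamental group $\ast_{E \subseteq D_i} E = D_i$ under the identification $\pi_1(M) \cong F_n$. Thus $\{S_1', \ldots, S_k'\}$ satisfies the properties characterizing $\tau_d$ in \cref{lem:min_sphere_system}, and uniqueness yields $\tau_d = \{S_1', \ldots, S_k'\}$, with a representative disjoint from $\tau$.

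To finish, I still need to verify the two conditions in the definition of $\Ftau$. Being cut is immediate from \cref{lem:cut_vertex_upwards_closed} applied to $\tau \subseteq \tau \cup \tau'$. For the decomposition condition, I would set $I = \{i \tq S_i' \in \tau'\}$ and analyze $\Gamma(\tau \cup \tau')$ directly: removing the new basepoint vertex yields one connected component per $C$ with $\pi_1(\tau_C) \subseteq D_i$ for $i \notin I$ (contributing the factor $\pi_1(\tau_C) \in \decompMap(\tau)$), together with one larger connected component for each $i \in I$ grouping the $i$-th peripheral piece of $\boundcomp(\tau) \setminus \tau'$ with all the $C$'s satisfying $\pi_1(\tau_C) \subseteq D_i$ (contributing the factor $D_i$). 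In particular, every factor of $\decompMap(\tau \cup \tau')$ is contained in some $D_i$, so $\decompMap(\tau \cup \tau') \leq d$. The main technical hurdle will be the disjoint realization of the $S_i'$ inside $\boundcomp(\tau)$; once that step is completed, everything else follows formally from the results already established.
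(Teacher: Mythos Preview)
Your proposal is correct and follows essentially the same route as the paper: partition the boundary spheres of $\boundcomp(\tau)$ according to the factors $D_i$, realize a candidate $\tilde\tau_d$ inside $\boundcomp(\tau)$ separating these blocks, and invoke the uniqueness clause of \cref{lem:min_sphere_system} to identify it with $\tau_d$, after which the cut condition comes from \cref{lem:cut_vertex_upwards_closed}. The only cosmetic difference is that the paper first treats $\tau'=\tau_d$, reads off $\decompMap(\tau\cup\tau_d)=d$ from \cref{it:tau_and_d_tau} (same basepoint-adjacent spheres), and then deduces the general $\tau'\subseteq\tau_d$ case via the monotonicity $\decompMap(\tau\cup\tau')\leq\decompMap(\tau\cup\tau_d)$, whereas you compute $\decompMap(\tau\cup\tau')$ directly; both arguments are fine.
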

\begin{proof}
\begin{figure}
\centering
\includegraphics{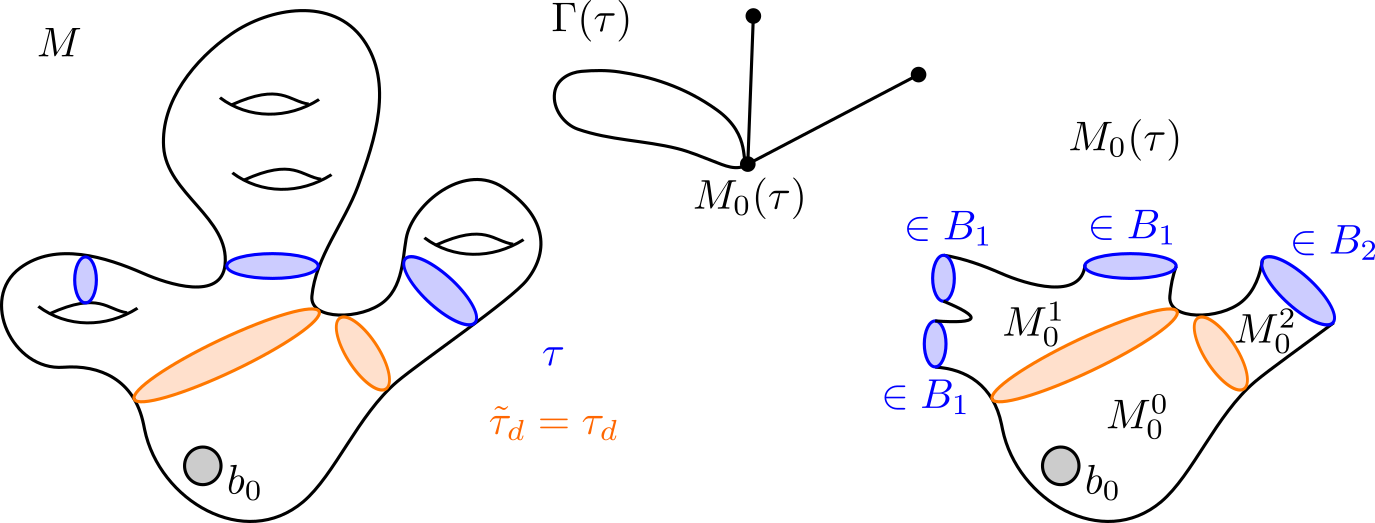}
\caption{On the left: The manifold $M\cong M_{4,1}$, a sphere system $\tau\in \Ftau$ and the system $\tau_d$ for a decomposition $d$ of the form $\ls D_1\cong F_3, D_2\cong F_1 \rs$. On the right: The corresponding decomposition of $M_0(\tau)$ from the proof of \cref{lem:F_1_comp_with_tau}.}
\label{fig:tau_d_and_tau_part}
\end{figure}
We start with the case $\tau'=\tau_d$.
Here, $\boundcomp(\tau)$ is a 3-sphere with one boundary component $b_0$ containing the basepoint and a set $B$ of further boundary components, corresponding to spheres of $\tau$.

To each non-basepoint boundary component $b \in B$, we can associate a unique $D\in d$ as follows:
The component $b$ corresponds to a unique edge in the edge set $E(\Gamma(\tau))$ of the dual graph of $\tau$; each such edge lies in a unique connected component of $|\Gamma(\tau)|\setminus \ls \boundcomp(\tau)\rs$; each such connected component corresponds to a unique $D'\in d'\coloneqq \decompMap(\tau)$ (see \cref{lm:tau_to_decomp}); as $d'\leq d$, each $D'\in d'$ is contained in a unique element of $d$. In summary, this gives a sequence of maps
\begin{equation*}
    B \to E(\Gamma(\tau))\to \ls \text{components of } |\Gamma(\tau)|\setminus \ls \boundcomp(\tau)\rs \rs \to d' \to d.
\end{equation*}
We use this map to partition the set $B$ as $B = B_1\sqcup \ldots \sqcup B_k$, where $d = \ls D_1, \ldots, D_k \rs$ and $B_i$ is the preimage in $B$ of $D_i$ under the composition of the above maps (see \cref{fig:tau_d_and_tau_part}).

In $M_0(\tau)$, we can find a system $\tilde\tau_d$ of $k$ pairwise-disjoint 3-spheres with the following properties: All spheres are separating; $M_0(\tau)-\tilde\tau_d$ has $k+1$ connected components $M_0^0,M_0^1,\ldots, M_0^k$; the component $M_0^0$ contains the boundary component $b_0$ of $M_0(\tau)$ and for $i>0$, the component $M_0^i$ contains exactly the boundary components in $B_i$ plus the component arising from the unique separating sphere of $\tilde\tau_d$.

Using the inclusion $\sphere(M_0(\tau))\hookrightarrow \sphere$, we regard $\tilde{\tau}_d$ as an element in $\sphere$ (see discussion at the end of \cref{sec:geom_complexes}).
It is easy to see that  $\pi_1(\tilde{\tau}_d) = 1$ and its basepoint component is cut, so $\tilde\tau_d\in \geomPD$. 
As each $B_i$ above corresponds to a union of connected components of $|\Gamma(\tau)|\setminus \ls \boundcomp(\tau)\rs$, it follows that each sphere in $\tilde{\tau}_d$ is not only separating in $M_0(\tau)$, but also in $M$. Using van Kampen's theorem and the identification of $d'$ with $\decompMap(\tau)$ from \cref{lm:tau_to_decomp}, we have $\decompMap(\tilde{\tau}_d) = d$. Hence, $\tilde{\tau}_d$ satisfies all properties of $\tau_d$ from \cref{lem:min_sphere_system} and by the uniqueness statement in \cref{lem:min_sphere_system}, we get $\tilde{\tau}_d = \tau_d$.

Now, as every sphere in $\tilde\tau_d=\tau_d$ is either contained in the basepoint component of $M-\tau $ or lies in $\tau$, we have $\tau\cup \tau_d \in \sphere$. By \cref{lem:cut_vertex_upwards_closed}, the system $\tau\cup \tau_d$ is cut.
Furthermore, the basepoint adjacent spheres of $\tau$ are exactly given by $\tau_d$, so by \cref{it:tau_and_d_tau}, we have $\decompMap(\tau\cup \tau_d) = \decompMap(\tau_d)=d$. This implies that $\tau\cup\tau_d\in \Ftau$.

For the general case, suppose that $\tau'\subseteq \tau_d$ is any subset.
Then, by the previous case, we have $\tau\cup \tau'\subseteq \tau\cup \tau_d \in \Ftau$.
As $\sphere$ is a simplicial complex, also $\tau\cup \tau'\in \sphere$, and $\tau\cup \tau'$ is cut because it contains $\tau$ and being cut is upwards closed by \cref{lem:cut_vertex_upwards_closed}. Lastly, by \cref{it:tau_and_d_tau}, we have $\decompMap(\tau\cup \tau')\leq \decompMap(\tau \cup \tau_d)= d$. Hence, $\tau\cup \tau'\in \Ftau$.
\end{proof}

\subsection{\texorpdfstring{$\Fsigma$ and $\Fsigma'$}{SA and SA'}}
Next, we collect properties of $\Fsigma$ and $\Fsigma'$.
The complex $\Fsigma$ was defined by Hatcher-Vogtmann \cite{HV1998}, who showed that it is contractible (see also the corrected version \cite{HV2022freefactors}).

\begin{theorem}
[{\cite[Thm. 2.1]{Hat:Homologicalstabilityautomorphism} and \cite[Thm. 2.1]{HV1998}}]
\label{thm:S_A_contractible}
The complexes $\sphere_A$ and $\sphere$ are contractible.
\end{theorem}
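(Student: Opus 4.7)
The plan is to invoke the classical results of Hatcher and Hatcher--Vogtmann cited in the statement; let me sketch the strategy underlying their proofs, which uses the \emph{surgery argument} for sphere systems.

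For contractibility of $\sphere$, the strategy is to fix a maximal sphere system $\Sigma_0$ (so that $M - \Sigma_0$ is a disjoint union of $3$-balls) and to deformation retract $\sphere$ onto the star $\st_\sphere(\Sigma_0)$, which is a cone and therefore contractible. Given a simplex $\sigma \in \sphere$, one isotopes representative spheres into \emph{normal form} with respect to $\Sigma_0$, meaning that the intersection pattern $\sigma \cap \Sigma_0$ consists of a minimal family of disjoint circles. If the intersection is non-empty, there is an innermost circle $c$ on some sphere of $\Sigma_0$ bounding a disk $D$ disjoint from $\sigma$; surgery along $D$ replaces a sphere of $\sigma$ by two new spheres and strictly decreases the number of intersection circles. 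Iterating this process terminates at a sphere system disjoint from $\Sigma_0$, i.e.\ a face of $\st_\sphere(\Sigma_0)$, and assembling these surgeries canonically gives the desired deformation retraction.

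For contractibility of $\Fsigma$, the same argument can be adapted once the maximal system $\Sigma_0$ is chosen compatibly with the free factor $A$. Such a choice is available: by \cref{lem:min_sphere_system} applied to any full decomposition refining $A$, there is a sphere system realizing $A$ as the fundamental group of the basepoint component, and one may refine this to a maximal sphere system $\Sigma_0$ containing it. The surgeries of the previous paragraph must then be verified to preserve the property $\pi_1(\sigma) \supseteq A$; this works because innermost-disk surgeries only subdivide spheres within a single component of $M$ cut by a subsystem, and so never alter which spheres separate $A$ from the basepoint.

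The main obstacle, which is the actual technical content of Hatcher's paper, is making the innermost-disk surgery sufficiently canonical: one must organize the choices continuously across all simplices of $\sphere$ in order to obtain a bona fide deformation retraction rather than only a pointwise contraction. In particular, normal form is not unique up to ambient isotopy as a set of embeddings, and additional combinatorial bookkeeping is required to check that the resulting map on $\sphere$ is continuous. Since this is already carried out in \cite{Hat:Homologicalstabilityautomorphism,HV1998}, and since the $A$-preserving version follows by essentially the same argument with the modifications above, the cleanest approach for our purposes is simply to cite these references.
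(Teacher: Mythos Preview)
The paper provides no proof of this theorem---it is quoted directly from the cited references of Hatcher and Hatcher--Vogtmann---and your proposal ultimately does the same, so the approaches coincide. One small correction to your sketch: \cref{lem:min_sphere_system} produces a system whose basepoint component is \emph{simply connected}, not one with fundamental group $A$, so that particular appeal is misplaced; but since you defer to the cited papers for the actual argument anyway, this does not affect the validity of your proposal.
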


Recall that a subcomplex $L$ of a simplicial complex $K$ is full if whenever we have a simplex $\sigma\in K$ whose vertices lie in $L$, then $\sigma\in L$.

\begin{lemma}
\label{lem:S_A_simplicial}
    Both $\Fsigma$ and $\Fsigma'$ are full subcomplexes of $\sphere$.
\end{lemma}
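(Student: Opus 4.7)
The plan is to reduce both fullness claims to the fact that $\sphere$ is a flag complex, a standard fact about sphere systems in $3$-manifolds due to Hatcher.

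For $\Fsigma$, I will first establish the identity
\[
\pi_1(\sigma) \;=\; \bigcap_{[S]\in\sigma} \pi_1(\{[S]\})
\]
for every $\sigma\in\sphere$. The inclusion $\subseteq$ is immediate from the monotonicity in \cref{lm:sigma_to_factor}. For $\supseteq$, I will apply Bass--Serre theory to the tree $T$ dual to $\sigma$ in the universal cover of $M$: the orbits of edges of $T$ are in natural bijection with the elements of $\sigma$; the vertex $v\in T$ determined by $v_0$ has stabilizer $\pi_1(\sigma)$; and collapsing all edge orbits except the one labeled by $[S_i]$ yields a tree $T_i$ on which the image of $v$ has stabilizer $\pi_1(\{[S_i]\})$. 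If $g$ fixes the image of $v$ in every $T_i$, then the unique geodesic from $v$ to $g\cdot v$ in $T$ can carry no edge labeled by any $[S_i]\in\sigma$, hence has length zero, so $g\in\pi_1(\sigma)$. Fullness of $\Fsigma$ follows at once: if $\sigma\in\sphere$ has every vertex in $\Fsigma$, then $A\subseteq\pi_1(\{[S]\})$ for each $[S]\in\sigma$, and hence $A\subseteq\bigcap_{[S]\in\sigma}\pi_1(\{[S]\})=\pi_1(\sigma)$.

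For $\Fsigma' = \Fsigma\cap \st_\sphere(\tau_d)$, it suffices to show that $\st_\sphere(\tau_d)$ is itself a full subcomplex, since the intersection of two full subcomplexes of $\sphere$ is again full. Given a simplex $\sigma\in\sphere$ all of whose vertices lie in $\st_\sphere(\tau_d)$, every pair of sphere classes appearing in $\sigma\cup\tau_d$ admits disjoint representatives (pairs inside $\sigma$ and inside $\tau_d$ by the simplex hypothesis on each, and cross-pairs by the definition of the star). The flag property of $\sphere$ then upgrades this pairwise compatibility to simultaneous compatibility, giving $\sigma\cup\tau_d\in\sphere$, i.e., $\sigma\in\st_\sphere(\tau_d)$.

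The main obstacle I anticipate is setting up the Bass--Serre quotients $T\to T_i$ compatibly with the submanifold descriptions of the basepoint components, so as to justify the identity $\pi_1(\sigma)=\bigcap_{[S]\in\sigma}\pi_1(\{[S]\})$ cleanly. The flag property of $\sphere$ is a non-trivial but well-known input that I would cite from Hatcher's work rather than reprove.
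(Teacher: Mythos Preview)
Your argument is correct. For $\Fsigma'$ it coincides with the paper's proof: both note that $\st_\sphere(\tau_d)$ is full because $\sphere$ is a flag complex (the paper cites Gadgil--Pandit rather than Hatcher for this), and that an intersection of full subcomplexes is full.

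For $\Fsigma$ you take a different route. The paper invokes \cite[Lemma~2.2]{HV2022freefactors}, which gives the statement $\pi_1(\sigma)\cap\pi_1(\sigma')\subseteq\pi_1(\sigma\cup\sigma')$; Hatcher--Vogtmann prove this via Hatcher's normal form and sphere surgery. You instead establish the equivalent identity $\pi_1(\sigma)=\bigcap_{[S]\in\sigma}\pi_1(\{[S]\})$ directly with Bass--Serre theory, and your tree argument is sound: the equivariant collapses $T\to T_i$ send the geodesic $[v,g\!\cdot\! v]$ onto the geodesic between the images, so if $g$ fixes the image of $v$ in every $T_i$ then $[v,g\!\cdot\! v]$ carries no edge of any orbit and is therefore trivial. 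The obstacle you anticipate is not a real one: the Bass--Serre tree for a sphere system is the standard dual tree in $\widetilde M$, and collapsing all edge orbits except the $i$th recovers exactly the tree for the single sphere $[S_i]$, with the basepoint vertex mapping to the basepoint vertex. Your approach gives a self-contained group-theoretic proof and avoids the surgery machinery; the paper's is shorter but outsources the work to the cited lemma.
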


\begin{proof}
    To see that $\Fsigma$ is a full subcomplex, we need to show that if $\sigma, \sigma'\in \Fsigma$ are such that $\sigma\cup \sigma'\in \sphere$, then $\sigma\cup\sigma'\in \Fsigma$.
    In other words, if $\pi_1(\sigma)$ and $\pi_1(\sigma')$ contain $A$, then also $\pi_1(\sigma\cup \sigma')\supseteq A$.
    This follows from \cite[Lemma 2.2]{HV2022freefactors}.
    
    The subcomplex $\Fsigma'$ is defined as the intersection of $\Fsigma$ with $\st_{\sphere}(\tau_d)$. The latter is also a full subcomplex of $\sphere$ since it is a flag complex (see Theorem 3.3 of \cite{gadgilpandit}). The claim then follows as the intersection of full subcomplexes is a full subcomplex again.
\end{proof}

The above lemma implies that a sphere system $\sigma \in \sphere$ lies in $\Fsigma$ (resp. $\Fsigma'$) if and only if for all $[S] \in \sigma$, the vertex $\ls [S]\rs$ lies in $\Fsigma$ (resp. $\Fsigma'$).
As a consequence, we obtain:

\begin{lemma}
\label{lem:existence_tau_d_A}
The face $\tau_{d,A} = \ls [S]\in \tau_d \tq \ls [S]\rs\in \Fsigma \rs$ is the unique maximal subsystem of $\tau_d$ such that $\pi_1(\tau_{d,A}) \supseteq A$.
\end{lemma}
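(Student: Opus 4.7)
The strategy is to combine the fullness of $\Fsigma$ established in \cref{lem:S_A_simplicial} with the monotonicity of the functor $\pi_1$ from \cref{lm:sigma_to_factor}. The argument splits into an existence part (showing $\tau_{d,A}$ itself lies in $\Fsigma$) and a maximality/uniqueness part (showing it absorbs every competing subsystem of $\tau_d$).

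First, I would verify that $\tau_{d,A}\in \Fsigma$, i.e., that $\pi_1(\tau_{d,A})\supseteq A$. If $\tau_{d,A}=\emptyset$, this is immediate since we interpret $\pi_1(\emptyset)=F_n\supseteq A$. Otherwise $\tau_{d,A}$ is a non-empty subset of the sphere system $\tau_d$, hence itself a sphere system and a face of $\sphere$. By the very definition of $\tau_{d,A}$, every vertex $\{[S]\}$ with $[S]\in \tau_{d,A}$ lies in $\Fsigma$. Since $\Fsigma$ is a full subcomplex of $\sphere$ by \cref{lem:S_A_simplicial}, the face $\tau_{d,A}$, all of whose vertices are in $\Fsigma$, must itself lie in $\Fsigma$.

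Next, I would show that $\tau_{d,A}$ contains every subsystem $\tau'\subseteq \tau_d$ with $\pi_1(\tau')\supseteq A$; this simultaneously gives maximality and uniqueness (as a maximum). Fix such a $\tau'$ and pick $[S]\in \tau'$. Then $\{[S]\}\subseteq \tau'$, so by the monotonicity statement in \cref{lm:sigma_to_factor} we have $\pi_1(\{[S]\})\supseteq \pi_1(\tau')\supseteq A$. Hence $\{[S]\}\in \Fsigma$, which by definition of $\tau_{d,A}$ means $[S]\in \tau_{d,A}$. Since $[S]\in \tau'$ was arbitrary, $\tau'\subseteq \tau_{d,A}$.

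I do not expect any real obstacle here: the lemma is essentially a formal consequence of (i) the fact that $\Fsigma$ is a full subcomplex and (ii) the fact that enlarging a sphere system can only shrink the fundamental group of the basepoint component. The only mild subtlety to be careful about is handling the edge case $\tau_{d,A}=\emptyset$, where one must explicitly appeal to the convention that the empty sphere system carries fundamental group $F_n$.
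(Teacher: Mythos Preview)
Your proof is correct and follows essentially the same approach as the paper: the paper's proof simply cites \cref{lem:S_A_simplicial}, relying on the remark following it that $\sigma\in\Fsigma$ if and only if every vertex of $\sigma$ lies in $\Fsigma$, which is precisely what you unpack in detail (with the monotonicity from \cref{lm:sigma_to_factor} giving the ``only if'' direction). Your explicit handling of the edge case $\tau_{d,A}=\emptyset$ is a welcome addition.
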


\begin{proof}
This follows from \cref{lem:S_A_simplicial}.
\end{proof}

\begin{lemma}
\label{lem:tau_d_A_empty}
Suppose that $\tau_{d,A} = \emptyset$.
Then for every $\sigma\in \Fsigma'$, we have $\sigma\cup \tau_d\in \Ftau$.
\end{lemma}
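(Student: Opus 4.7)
The plan is to verify the two defining conditions of $\Ftau$ for $\sigma\cup \tau_d$. That $\sigma\cup \tau_d$ is cut is immediate: $\sigma\cup \tau_d\in \sphere$ since $\sigma\in \st_{\sphere}(\tau_d)$; $\tau_d$ itself is cut, because by \cref{lem:min_sphere_system}(3) its basepoint component is simply connected, so the defining condition of $\geomPD$ forces cutness; and then \cref{lem:cut_vertex_upwards_closed} transfers cutness to $\sigma\cup \tau_d$. To obtain $\decompMap(\sigma\cup \tau_d)\leq d$, I will invoke the ``Furthermore'' part of \cref{it:tau_and_d_tau}: it suffices to show that $\sigma\cup \tau_d$ has the same basepoint-adjacent spheres as $\tau_d$, for then $\decompMap(\sigma\cup \tau_d)=\decompMap(\tau_d)=d$. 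Since every sphere of $\tau_d$ is basepoint-adjacent by \cref{lem:min_sphere_system}(3), the key claim I need is that, after picking disjoint representatives, no sphere of $\sigma$ lies in $\boundcomp(\tau_d)$; once this is known, $\boundcomp(\tau_d)$ survives as $\boundcomp(\sigma\cup \tau_d)$ and its adjacent spheres in $\sigma\cup \tau_d$ are exactly those of $\tau_d$.

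To prove the key claim I argue by contradiction. Suppose some $[S]\in \sigma$ has a representative $S\subset \boundcomp(\tau_d)$. By fullness of $\Fsigma$ (\cref{lem:S_A_simplicial}), the singleton $\ls [S]\rs$ already lies in $\Fsigma$, so $A\subseteq \pi_1(\ls [S]\rs)$. Since $\boundcomp(\tau_d)\cong M_{0,k+1}$ is simply connected by \cref{lem:min_sphere_system}(3), the sphere $S$ separates it into two pieces; let $P_0$ denote the piece containing $v_0$ and $I\subseteq \{1,\ldots,k\}$ the set of indices $j$ with $T_j\subset \partial P_0$, where $T_1,\ldots,T_k$ are the spheres of $\tau_d$. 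A van Kampen computation, using that $P_0$ is simply connected and $\pi_1(M_i)=D_i$ as in \cref{lem:min_sphere_system}(3), yields $\pi_1(\ls [S]\rs)=\ast_{i\in I} D_i$. By \cref{lm:sigma_to_factor} this is a proper free factor of $F_n$, forcing $I\subsetneq \{1,\ldots,k\}$; picking any $j\notin I$ gives
\[ A\subseteq \ast_{i\in I} D_i \subseteq \ast_{i\neq j} D_i = \pi_1(\ls [T_j]\rs),\]
so $[T_j]\in \tau_{d,A}$, contradicting the hypothesis $\tau_{d,A}=\emptyset$.

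The main obstacle I expect is the geometric bookkeeping surrounding the van Kampen step: verifying that $P_0$ is simply connected (which comes from applying van Kampen to the decomposition $\boundcomp(\tau_d)=P_0\cup P_1$ glued along the simply connected sphere $S$, together with $\pi_1(\boundcomp(\tau_d))=1$) and pinning down $\pi_1(\ls [T_j]\rs)=\ast_{i\neq j} D_i$ from the explicit structure of $\tau_d$ described in \cref{lem:min_sphere_system}. The fullness of $\Fsigma$ is essential throughout, as it is what lets the hypothesis $\pi_1(\sigma)\supseteq A$ be reduced to a statement about individual spheres of $\sigma$.
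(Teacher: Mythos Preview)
Your proof is correct and follows essentially the same approach as the paper: both argue that $\sigma\cup\tau_d$ is cut via \cref{lem:cut_vertex_upwards_closed}, then show that no sphere of $\sigma$ can lie in $\boundcomp(\tau_d)$ (so the basepoint-adjacent spheres are exactly those of $\tau_d$), and conclude via the ``Furthermore'' clause of \cref{it:tau_and_d_tau}. The only cosmetic difference is that the paper focuses on the non-basepoint side of $S$, calling the corresponding subset $\delta\subseteq \tau_d$ and observing $\pi_1(\{[S]\})\subseteq \pi_1(\delta)$, whence $\delta\subseteq \tau_{d,A}$; you instead compute $\pi_1(\{[S]\})=\ast_{i\in I}D_i$ from the basepoint side and extract a single $[T_j]\in \tau_{d,A}$ with $j\notin I$---the two arguments are dual and equally valid.
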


\begin{proof}
Let $\sigma\in \Fsigma'$.
As $\sigma\in \st_{\sphere}(\tau_d)$, we have $\sigma\cup \tau_d\in \sphere$ and since $\tau_d$ is cut, by \cref{lem:cut_vertex_upwards_closed}, $\sigma\cup \tau_d$ is cut.
To conclude that $\sigma\cup \tau_d \in \Ftau$, we prove that the spheres in $\sigma\cup\tau_d$ that are adjacent to the basepoint component are exactly those in $\tau_d$, and then invoke \cref{it:tau_and_d_tau}.

\begin{figure}
\centering
\includegraphics{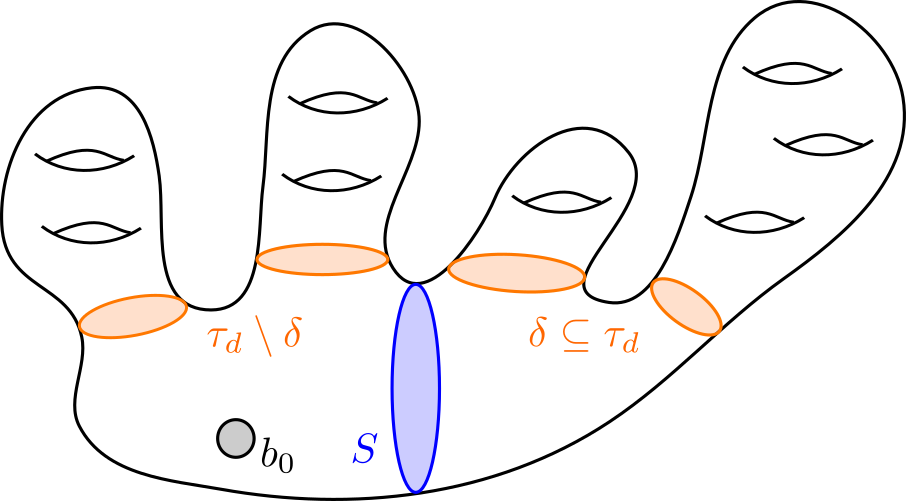}
\caption{The manifold $M$ from the proof of \cref{lem:tau_d_A_empty}. The four orange circles depict $\tau_d$ and the part below the orange circles is $M_0(\tau_d)$. The blue circle depicts a sphere $S$ such that $[S]\in \sphere(M_0(\tau_d))\subset \sphere$.}
\label{fig:fig_tau_d_A_empty}
\end{figure}

Assume that there is an element $[S]\in \sigma$ such that $\ls [S]\rs \in \sphere(\boundcomp(\tau_d)) \subseteq \lk_{\sphere}(\tau_d)$ (see \cref{sec:geom_complexes}).
Recall that $\boundcomp(\tau_d) \cong M_{0,k+1}$ is a 3-ball with $k+1$ boundary components, one containing the basepoint and the remaining $k$ being in one-to-one correspondence with the elements of $\tau_d$ (see \cref{lem:min_sphere_system}).
The manifold $\boundcomp(\tau_d)- \{[S]\}$ has two connected components (each one simply connected).
The component that does not contain the basepoint contains at least one boundary component of $\boundcomp(\tau_d)$ because $S$ does not bound a disc (see \cref{fig:fig_tau_d_A_empty}). Let $\emptyset\neq\delta\subseteq \tau_d$ be the set of spheres of $\tau_d$ corresponding to the boundary components that lie in this connected component of $\boundcomp(\tau_d) - \ls [S] \rs$.
Then it is clear that $\delta \cup \{[S]\} \in \sphere$, and at the level of fundamental groups we get inclusions
\[ \pi_1(\ls [S]\rs) = \pi_1(\delta \cup \{[S]\}) \subseteq  \pi_1(\delta).\]
As $\ls [S]\rs\in \Fsigma$, that is, $\pi_1(\ls [S] \rs)\supseteq A$, we deduce that $\pi_1(\delta)\supseteq A$ and hence $\delta\subseteq \tau_{d,A}$, which is empty by hypothesis.
This contradicts our choice of $[S]$, showing that no such sphere exists.

This implies that the spheres of $\sigma\cup \tau_d$ adjacent to its basepoint component are exactly the ones in $\tau_d$. Hence, $\sigma\cup \tau_d$ is cut and by \cref{it:tau_and_d_tau}, we have $\decompMap(\sigma\cup \tau_d)=\decompMap(\tau_d) = d$, so $\sigma\cup \tau_d\in \Ftau$.
\end{proof}

\begin{lemma}
\label{lem:F_2_cap_st_tau_d_contractible}
If $A\neq F_n$ and $d\neq \emptyset$, the poset $\Fsigma'$ is contractible.
\end{lemma}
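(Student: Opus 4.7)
My plan is to split the argument according to whether the subsystem $\tau_{d,A}\subseteq \tau_d$ from \cref{lem:existence_tau_d_A} is empty. A direct van Kampen computation shows that for the sphere $[S_j]\in\tau_d$ separating $N_j$ from the rest, we have $\pi_1(\ls [S_j]\rs)=\ast_{i\neq j}D_i$, so $\ls [S_j]\rs \in\Fsigma$ iff $A_j:=A\cap D_j=1$. Thus $\tau_{d,A}\neq\emptyset$ precisely when some $A_j$ is trivial, while $\tau_{d,A}=\emptyset$ means every $A_j$ is non-trivial.

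\textbf{Case $\tau_{d,A}\neq\emptyset$.} I show that $\Fsigma'$ is a cone on $\tau_{d,A}$. For any $\sigma\in \Fsigma'$, the union $\sigma\cup\tau_{d,A}$ is a sphere system (since $\sigma$ can be realized disjointly from $\tau_d\supseteq\tau_{d,A}$), lies in $\Fsigma$ by fullness (\cref{lem:S_A_simplicial}, because both $\sigma$ and every vertex of $\tau_{d,A}$ are in $\Fsigma$), and lies in $\st_\sphere(\tau_d)$ as $\sigma\cup\tau_{d,A}\cup\tau_d=\sigma\cup\tau_d\in\sphere$. Hence $\sigma\leq \sigma\cup\tau_{d,A}\geq \tau_{d,A}$ is a zig-zag of poset inequalities in $\Fsigma'$, contracting it onto the point $\tau_{d,A}$.

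\textbf{Case $\tau_{d,A}=\emptyset$.} Every $A_j$ is non-trivial. Decompose $M-\tau_d = N_0\sqcup N_1\sqcup\cdots\sqcup N_k$ with $N_0=\boundcomp(\tau_d)\cong M_{0,k+1}$ simply connected and $\pi_1(N_j)=D_j$. Every $\sigma\in\Fsigma'$ splits as $\sigma=\sigma^{(0)}\sqcup\cdots\sqcup\sigma^{(k)}$ with $\sigma^{(i)}\subseteq N_i$. Basis compatibility (\cref{lem:free_factors_decompositions_common_bases}) gives $A=A_1\ast\cdots\ast A_k$. A van Kampen computation shows that $\pi_1(\sigma)\supseteq A$ forces the basepoint piece of $N_0-\sigma^{(0)}$ to be adjacent to every boundary sphere of $N_0$ (since all $A_j\neq 1$). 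If $\sigma^{(0)}\neq\emptyset$, then for any $[S]\in\sigma^{(0)}$, the $v_0$-side of $S$ in $N_0$ contains the basepoint piece and hence all of $\partial N_0$, so the other side contains no boundary component of $N_0$; this forces $S$ to bound a ball in $N_0\cong M_{0,k+1}$, contradicting essentiality. Hence $\sigma^{(0)}=\emptyset$, and using the embeddings $\sphere(N_j)\hookrightarrow \lk_\sphere(\tau_d)$ we identify
\[
\Fsigma' \;=\; \underset{j\,:\,A_j\neq D_j}{\ast}\sphere_{A_j}(N_j).
\]
The condition $A\neq F_n$ forces $A_j\neq D_j$ for at least one index $j$; such an $A_j$ is then a proper non-trivial free factor of $D_j$, and $\sphere_{A_j}(N_j)$ is contractible by \cref{thm:S_A_contractible}. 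As a join containing a contractible factor, $\Fsigma'$ is contractible.

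The main technical hurdle is the van Kampen and essentiality analysis in Case 2 that forces $\sigma^{(0)}=\emptyset$; it hinges on the specific structure of $N_0\cong M_{0,k+1}$, whose essential spheres must separate $\partial N_0$ into two parts each of size at least two. One also needs the standard identification of essential spheres of $N_j$ with their isotopy classes in $\lk_\sphere(\tau_d)$ supported in $N_j$, coming from the inclusion $\sphere(N_j)\hookrightarrow \lk_\sphere(\tau_d)$ recalled in \cref{sec:geom_complexes}.
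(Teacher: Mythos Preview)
Your proof is correct, but it is more elaborate than the paper's. The paper argues uniformly, without splitting on whether $\tau_{d,A}$ is empty: since $\Fsigma'$ is a full subcomplex of the join $\st_\sphere(\tau_d) = \tau_d * \sphere(M_0) * \sphere(M_1) * \cdots * \sphere(M_k)$, it decomposes as
\[
\Fsigma' \;=\; (\Fsigma \cap \tau_d) * (\Fsigma \cap \sphere(M_0)) * (\Fsigma \cap \sphere(M_1)) * \cdots * (\Fsigma \cap \sphere(M_k)),
\]
and a short van Kampen computation identifies $\Fsigma \cap \sphere(M_i)$ with $\sphere(M_i)_{A_i}$ for each $i\geq 1$. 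Since $A\neq F_n$ forces some $A_i$ to be a proper free factor of $D_i$, at least one of these factors is contractible by \cref{thm:S_A_contractible}, so the whole join is contractible. There is no need to analyze $\Fsigma\cap\tau_d$ or $\Fsigma\cap\sphere(M_0)$ at all.

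Your Case~1 cone argument is a pleasant shortcut, but the work you do in Case~2 --- the essentiality argument forcing $\sigma^{(0)}=\emptyset$ --- is unnecessary for this lemma: a single contractible join factor from the $M_i$ side ($i\geq 1$) already suffices, regardless of what happens in $M_0$ or in $\tau_d$. That essentiality argument is essentially the content of \cref{lem:tau_d_A_empty}, which the paper proves separately and uses for a different step (the contractibility of the fibers of $\alpha$). So while your route is sound, it front-loads a piece of analysis that the paper defers and uses elsewhere.
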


\begin{proof}
Let $\tau_d=\{[S_1],\ldots,[S_k]\}$ and write $M_0\cong M_{0,k+1}$ for the basepoint component of $M-\tau_d$ and $M_1,\ldots, M_k$ for the the remaining connected components, where the fundamental group of $M_i$ is identified with $D_i$, $M_i$ is diffeomorphic to $M_{\rk(D_i),1}$, and its boundary component is given by one of the sides of $S_i$ (that we can make these identifications is \cref{it:comps_tau_d} of \cref{lem:min_sphere_system}).
Recall from \cref{sec:geom_complexes} that $\sphere(M_i)$ embeds into $\sphere(M)$, and even in $\lk_{\sphere}(\tau_d)$.
It follows that the star of $\tau_d$ in $\sphere$ is given by the join
\begin{equation*}
\label{eq:star_taud}
  \st_{\sphere}(\tau_d) = \tau_d *  \sphere(M_0) * \sphere(M_1) * \cdots * \sphere(M_k),
\end{equation*}
where we regard $\tau_d$ as a simplicial complex consisting of a single maximal simplex $\tau_d$.

By \cref{lem:S_A_simplicial}, the poset $\Fsigma'$ is a full subcomplex of $\sphere$, so we have
\begin{equation}
\label{eq:SprimeA_join}
    \Fsigma' = \sphere_A\cap \st_{\sphere}(\tau_d) = \sphere_A \cap \tau_d   *  \sphere_A \cap\sphere(M_0) * \sphere_A \cap \sphere(M_1) * \cdots * \sphere_A \cap \sphere(M_k).
\end{equation}
Recall from \cref{lem:free_factors_decompositions_common_bases} that, since $(A,d)\in \Kfour$, the factor $A$ is the free product of $A_1,\ldots,A_k$, where $A_i := A\cap D_i$.
We claim that for $i\geq 1$, we have  
\begin{equation}
\label{eq:no_good_name_for_this}
    \sphere_A \cap \sphere(M_i)= \sphere(M_i)_{A_i}.
\end{equation}
By \cref{thm:S_A_contractible}, the poset $\sphere(M_i)_{A_i}$ is contractible. Hence, \cref{eq:no_good_name_for_this} together with \cref{eq:SprimeA_join} implies that $\Fsigma'$ contains a contractible poset as a join factor (recall that $k\geq 1$ as $d\neq\emptyset$) and hence is contractible itself.

To prove \cref{eq:no_good_name_for_this}, note that if $\delta\in \sphere(M_i)$, then the inclusion $\sphere(M_i)\hookrightarrow\sphere$ lets us see $\delta$ as an element of $\sphere = \sphere(M)$ and its fundamental group as such a sphere system of $M$ is
\[ \pi_1^M(\delta) = \pi_1(M_0 \cup M_i \sm \delta,v_0) * \gen{D_j \tq j\neq i}.\]
Hence, we have $A\subseteq \pi^M_1(\delta)$  if and only if 
\[ \pi_1(M_0\cup M_i \sm \delta,v_0) \supseteq A\cap D_i = A_i.\]
The van Kampen Theorem gives an identification of $\pi_1(M_0\cup M_i \sm \delta,v_0)$ with $\pi_1(M_i \sm \delta,v_i)$, where $v_i$ is a point on the boundary of $M_i$. The latter is the same as $\pi_1^{M_i}(\delta)$, the fundamental group of $\delta$, seen as a sphere system of $M_i$. 
We hence have $A\subseteq \pi^M(\delta)$ (i.e.,~$\delta \in \sphere_A \cap \sphere(M_i)$) if and only if $A_i\subseteq \pi_1^{M_i}(\delta)$ (i.e.,~$\delta \in \sphere(M_i)_{A_i}$). This finishes the proof of \cref{eq:no_good_name_for_this} and hence of the lemma.
\end{proof}

\subsection{Contractibility of the fibers of \texorpdfstring{$\alpha$}{alpha}}

We are now ready to prove \cref{prop:alpha_he}, i.e.,~to show that the fibers of $\alpha$ are contractible.

\begin{proof}[Proof of \cref{prop:alpha_he}]
For the proof, we will regard $\Fsigma$ and $\Fsigma'$ as posets via their face posets.
Let $\tau_d$ be as in \cref{lem:min_sphere_system} and $\Ftau$ and $\Fsigma$ as in \cref{eq:def_F_1_F_2}. The fiber $\alpha^{-1}(\Kfour_{\leq (A,d)})$ is equal to $\sd(\Fsigma\cup \Ftau)$, the poset of chains in $\Fsigma\cup \Ftau$, so it suffices to show that the poset $\Fsigma\cup \Ftau$ is contractible.
As a warm-up, we consider first the cases where $A=F_n$ or $d=\emptyset$. In the former case, $(A,d) = (F_n,d)$ and $d\neq\emptyset$, we have $\Fsigma = \emptyset$, so $\alpha^{-1}(\Kfour_{\leq (A,d)}) = \sd\Ftau$. 
By \cref{lem:F_1_comp_with_tau}, the assignment $\tau\mapsto \tau\cup \tau_d$ gives a monotone poset map on $\Ftau$, hence a deformation retraction from $\Ftau$ to its subposet consisting of all $\tau\in \Ftau$ that contain $\tau_d$. This subposet is contractible because $\tau_d$ is the unique minimal element of it. Hence, the poset $\Ftau$ is contractible as well.
In the second case, namely when $(A,d) = (A,\emptyset)$ and $A\neq F_n$, the poset $\Ftau$ is empty, and therefore $\alpha^{-1}(\Kfour_{\leq (A,d)}) = \sd \Fsigma$, which is contractible by \cref{thm:S_A_contractible}.

We will now assume that $A\neq F_n$ and $d\neq \emptyset$ and define a sequence of two deformation retractions starting at $\Fsigma\cup \Ftau$ and ending at a contractible subposet.

We start by showing that there is a deformation retraction
\begin{equation*}
    \phi_1\colon |\Fsigma\cup \Ftau|\to |\Fsigma' \cup \Ftau|.
\end{equation*}
By  \cref{thm:S_A_contractible} and \cref{lem:F_2_cap_st_tau_d_contractible}, the posets $\Fsigma$ and $\Fsigma'$ are both contractible. Using the Whitehead Theorem \cite[Chapter 4, Theorem 4.4]{Hat:Algebraictopology}, we hence get a deformation retraction $\phi\colon|\Fsigma|\to |\Fsigma'|$.
By \cref{lm:sigma_to_factor}, no element of $\Ftau$ can be less than an element of $\Fsigma$. This implies that we can write
\begin{equation*}
    |\Fsigma\cup \Ftau| = |\Fsigma|\cup |F_{\leq \text{cut}}| \text{, where }F_{\leq \text{cut}} = \ls \delta\in \Fsigma \cup\Ftau  \tq \delta\leq \tau \text{ for some } \tau\in \Ftau\rs.
\end{equation*}
We extend $\phi$ to $|\Fsigma\cup \Ftau|$ by setting it to be the identity on $|F_{\leq \text{cut}}|$. To see that this gives a well-defined deformation retraction to $|\Fsigma' \cup \Ftau|$, we need to verify that $|F_{\leq \text{cut}}|\subseteq |\Fsigma' \cup \Ftau|$ (this in particular implies that $\phi$ restricts to the identity on $|\Fsigma|\cap |F_{\leq \text{cut}}|$).
In fact, we have $F_{\leq \text{cut}}\subseteq \Fsigma' \cup \Ftau$ on the poset level: It is clear that $F_{\leq \text{cut}} \cap \Ftau\subseteq \Ftau$. On other hand, if $\sigma\in F_{\leq \text{cut}}\cap \Fsigma$, then by definition $\sigma\leq \tau$ for some $\tau\in \Ftau$. By \cref{lem:F_1_comp_with_tau}, $\tau$ is contained in $\st_\sphere(\tau_d)$, and hence so is its face $\sigma\subseteq \tau$. This means that $\sigma \in \Fsigma'$.

Next, we define an order-preserving map $\phi_2:\Fsigma'\cup\Ftau \to \Fsigma'\cup \Ftau$ that is monotone and whose image contains a unique minimal element---hence it is contractible.
As such a map defines a homotopy equivalence between $\Fsigma'\cup\Ftau$ and its image, $\Fsigma'\cup \Ftau$ is contractible.

For the definition of $\phi_2$, we use the face $\tau_{d,A}$ from \cref{lem:existence_tau_d_A}, and consider first the case where $\tau_{d,A}$ is non-empty.
In this case, we define a poset map 
\begin{equation*}
    \phi_2 \colon \Fsigma' \cup \Ftau \to  \Fsigma' \cup \Ftau
\end{equation*}
by sending each $\delta$ to $\delta \cup \tau_{d,A}$.
We need to show that this assignment actually maps to $\Fsigma' \cup \Ftau$.
If $\tau\in \Ftau$, then $\tau\cup \tau_{d,A} = \phi_2(\tau)\in \Ftau$ by \cref{lem:F_1_comp_with_tau}.
If on the other hand $\sigma\in \Fsigma'$, then $\sigma\cup \tau_{d,A} = \phi_2(\tau) \in \Fsigma'$ as well.
Indeed, we have $\sigma\cup \tau_{d,A} \in \sphere$ because $\sigma\in \st_\sphere(\tau_d)\subseteq \st_\sphere(\tau_{d,A})$. But $\sigma$ and $\tau_{d,A}$ are also both contained in $\Fsigma'$ and by \cref{lem:S_A_simplicial}, $\Fsigma'$  is a full subcomplex of $\sphere$, so $\sigma \cup \tau_{d,A}\in \Fsigma'$.
This shows that the map $\phi_2$ is well-defined. It is clearly order-preserving and monotone, and its image contains $\tau_{d,A}\in \Fsigma'$ as the unique minimal element.
Therefore, $\Fsigma'\cup \Ftau$ is contractible in this case.

Now suppose that $\tau_{d,A}$ is empty. Here, we define a map
\begin{equation*}
    \phi_2 \colon \Fsigma' \cup \Ftau \to \Fsigma' \cup \Ftau
\end{equation*}
by sending $\delta$ to $\delta \cup \tau_{d}$. Again, we need to check that this is well-defined.
If $\tau\in \Ftau$, then as before, \cref{lem:F_1_comp_with_tau} implies that $\tau\cup \tau_{d} = \phi_2(\tau)\in \Ftau$.
For $\sigma \in \Fsigma'$, we have $\sigma\cup \tau_d = \phi_2(\sigma)\in \Ftau$ by \cref{lem:tau_d_A_empty}.
This shows that $\phi_2$ is well-defined. It is clearly order-preserving and monotone, and its image contains $\tau_{d}\in \Ftau$ as the unique minimal element.
Hence, $\Fsigma'\cup \Ftau$ is contractible.

In any case, we have shown that
\[ \alpha^{-1}( \, \Kfour_{\leq (A,d)} \,) = \sd(\Fsigma\cup \Ftau) \simeq \Fsigma'\cup \Ftau\]
is contractible, which concludes the proof of the proposition.
\end{proof}

\cref{thm:he_geomPD_kfour} now follows from \cref{eq:alpha_def}, \cref{lem_alpha_poset_map}, \cref{prop:alpha_he} and Quillen's fiber theorem.

\section{Connectivity of \texorpdfstring{$\geomPD$}{D(Mn,1)}}
\label{sec:connectivity_D}

As before, we fix $n\geq 1$, write $M=M_{n,1}$, $\sphere = \sphere(M)$ and $\geomPD = \geomPD_n$. We now drop the convention that sphere systems denoted by $\sigma$ satisfy $\pi_1(\sigma) \neq 1$.
In this section, we will prove that $\geomPD$ is $(2n-4)$-connected, which implies \cref{thm:mainCB}. We will follow the strategy of Hatcher--Vogtmann \cite[Proposition 6.2]{HatcherVogtmannCerf}, using \cite[Theorem 4.6]{Aygun2022} in place of \cite[Theorem 3.1]{HatcherVogtmannCerf}. 

Recall that $\Gamma(\sigma)$ denotes the dual graph of a sphere system $\sigma\in \sphere$, where each vertex $x\in \Gamma(\sigma)$ corresponds to a connected component of $M-\sigma$ (see \cref{sec:geom_complexes}). In this section, we regard $\Gamma(\sigma)$ as a vertex-labeled graph, where we label a vertex $x$ by $g(x)$, the rank of the fundamental group of this connected component.
As before, we write $\boundcomp(\sigma)$ for the vertex of $\Gamma(\sigma)$ corresponding to the basepoint component.
We have
\begin{equation}
\label{eq:in_geomPD_graph}
    \sigma\in \geomPD \text{ if and only if }  g(\boundcomp(\sigma))>0 \text{ or } \boundcomp(\sigma) \text{ is a cut vertex of } \Gamma(\sigma).
\end{equation}

Next, we recall the notion of \textit{degree} from \cite{Aygun2022}, which will give a filtration of $\sphere$.

\begin{definition}
\label{def:degree}
The \emph{degree} of a simplex $\sigma\in \sphere$ is defined as
\begin{equation*}
    \deg(\sigma)=\sum_{x \neq \boundcomp(\sigma)} v(x)-2+2g(x),
\end{equation*}
where the sum is taken over all vertices $x$ of $\Gamma(\sigma)$, $v(x)$ denotes the valence of a vertex and $g(x)$ its label.

We define $\sphere^{\leq \thedegree}$ to be the subposet of $\sphere$ consisting of sphere systems $\sigma$ with $\deg(\sigma) \leq \thedegree$. 
\end{definition}

The following is the ``degree theorem'' proved in \cite[Theorem 3.5]{Aygun2022}.

\begin{theorem}
$\sphere^{\leq \thedegree}$ is $(n+\thedegree-2)$-connected.
\end{theorem}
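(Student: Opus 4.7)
The plan is to adapt the strategy used by Hatcher--Vogtmann for their analogous degree theorem on the spine (their \cite[Corollary 3.2]{HatcherVogtmannCerf}), now carried out on the full sphere-system complex $\sphere(M_{n,1})$. The proof proceeds by double induction on $n$ and $l$, exploiting the filtration $\sphere^{\leq 0}\subseteq \sphere^{\leq 1}\subseteq\cdots$ together with the fact that the ambient space $\sphere$ is contractible by \cref{thm:S_A_contractible}.

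Concretely, given a simplicial map $f\colon S^k\to \sphere^{\leq \thedegree}$ with $k\leq n+\thedegree-2$, I would first use contractibility of $\sphere$ to extend $f$ to a simplicial map $F\colon D^{k+1}\to \sphere$ from a triangulated disc, and then push $F$ down into $\sphere^{\leq \thedegree}$ via a Morse-theoretic argument. Order the simplices $\Delta$ of $D^{k+1}$ by $\deg(F(\Delta))$, and let $l' > l$ be the current maximum. For a simplex $\Delta$ with $\deg(F(\Delta)) = l'$, I would replace $F|_{\st(\Delta)}$ with a map factoring through a join $\partial \Delta * X$, where $X$ is a subcomplex of the \emph{descending link} $\lk_\sphere(F(\Delta))\cap \sphere^{<l'}$. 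This local move strictly decreases the number of top-degree simplices; iterating it finitely often deposits $F$ in $\sphere^{\leq \thedegree}$.

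The key ingredient needed for such surgeries is a connectivity estimate on descending links. For $\sigma\in \sphere$ with $\deg(\sigma) = l'$, the link $\lk_\sphere(\sigma)$ admits a join decomposition indexed by the connected components $N_1,\ldots,N_r$ of $M-\sigma$, each factor being (essentially) $\sphere(N_i)$. Under this decomposition, one tracks how adding a new sphere inside $N_i$ modifies the valence/genus contribution $v(x)-2+2g(x)$ at the relevant vertices of $\Gamma(\sigma)$: spheres inside a non-basepoint component alter degree in a way that can be controlled by the rank of $N_i$, while spheres inside the basepoint component $M_0(\sigma)$ behave asymmetrically because the valence of $\boundcomp(\sigma)$ is omitted from the degree sum. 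Applying the inductive hypothesis to each join factor $\sphere(N_i)^{\leq l''_i}$ (for appropriate $l''_i$ dictated by the degree bookkeeping) yields that the descending link is sufficiently highly connected to support the surgery.

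The main obstacle will be the numerical bookkeeping in this descending-link analysis: one must verify that the sum of $n_i+l''_i-2$ over the join factors, together with the dimension contribution of $\partial \Delta$, assembles to the correct bound $n+l-2$. This is essentially an additivity argument for the degree function over the components of $M-\sigma$, but it is delicate because of the special role of the basepoint component, and it is the part where one is forced to match the filtration-style connectivity of $\sphere(M_{n',1})^{\leq l''}$ (from the inductive hypothesis on $n$) against the unfiltered contractibility of $\sphere(N_i)$ for non-basepoint components (from \cref{thm:S_A_contractible}). The base cases — small $n$, or small (possibly negative) $l$ — would be handled directly by describing $\sphere^{\leq 0}$ as consisting of sphere systems whose non-basepoint components of $M-\sigma$ are all $3$-balls and showing this subcomplex deformation retracts onto a manifestly $(n-2)$-connected subspace.
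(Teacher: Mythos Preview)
The paper does not give its own proof of this statement: it is quoted verbatim as the ``degree theorem'' from \cite[Theorem 3.5]{Aygun2022} and used as a black box. So there is no proof in the paper to compare your proposal against.

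That said, your outline is a reasonable sketch of how such a result is typically proved, and it is broadly consistent with the hints about the cited argument that are visible later in the paper (in the proof of \cref{prop:connectivity_D}). Two refinements you may want to incorporate if you actually carry this out. First, the argument in \cite{Aygun2022} organizes the descending-link analysis around \emph{pillars} (faces consisting of the non-loop spheres adjacent to the basepoint component); by \cref{3.2} the degree depends only on the pillar, so one reduces to modifying the map near simplices whose image \emph{is} a pillar, which makes the bookkeeping cleaner. Second, the join decomposition used is $Y(\boundcomp)*\sphere(M_1)*\cdots*\sphere(M_k)$, where $Y(\boundcomp)$ is the subcomplex of \emph{non-separating} sphere systems in the basepoint component, not the full $\sphere(\boundcomp)$; this is exactly the asymmetry of the basepoint component you allude to, and it is what makes the connectivity estimate $(n+\themaxdegree-\dim(f(\Sigma))-3)$ come out right. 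Your proposed double induction on $n$ and $\thedegree$ with an explicit filtration $\sphere(N_i)^{\leq l''_i}$ on the non-basepoint factors is more elaborate than what is actually needed: in the cited proof the non-basepoint factors $\sphere(M_i)$ are simply contractible (\cref{thm:S_A_contractible}), so no inductive hypothesis on smaller $n$ is required there.
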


\begin{definition}
Let $\sigma$ be a simplex of $\sphere$. The \emph{pillar} of $\sigma$ is the (possibly empty) face of $\sigma$ consisting of spheres that correspond to non-loop edges of $\Gamma(\sigma)$ that are adjacent to $M_0(\sigma)$. We say that a simplex is a pillar if it is equal to its pillar.
\end{definition}

The following follows from \cite[Corollary 2.3]{Aygun2022}.

\begin{lemma} \label{3.2}
If $\sigma$ and $\tau$ have the same pillar, then $\deg(\sigma)=\deg(\tau)$.
\end{lemma}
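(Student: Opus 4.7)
The plan is to derive a closed-form expression
\[
\deg(\sigma) = 2n - |\pi| - 2g(M_0(\pi))
\]
where $\pi$ denotes the pillar of $\sigma$; this immediately yields the lemma.

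First, I would reduce $\deg(\sigma)$ to the local contribution at the basepoint. A double-count gives $\sum_x v(x) = 2|\sigma|$, and the graph-of-groups decomposition of $M$ along $\sigma$ (with trivial edge groups) yields $\sum_x g(x) = n - \rk H_1(\Gamma(\sigma))$, so that the global sum $\sum_{x}(v(x) - 2 + 2g(x))$ taken over \emph{all} vertices of $\Gamma(\sigma)$ collapses to the topological invariant $2n - 2$. Hence
\[
\deg(\sigma) = 2n - 2 - \bigl( v(M_0(\sigma)) - 2 + 2g(M_0(\sigma)) \bigr),
\]
and the task reduces to showing that $v(M_0(\sigma)) + 2g(M_0(\sigma))$ is determined by $\pi$.

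The key step is the structural observation that no non-basepoint vertex of $\Gamma(\sigma)$ can sit inside the basepoint component $M_0(\pi)$ of $M - \pi$: otherwise, connectivity of $M_0(\pi)$ would force a non-loop edge of $\Gamma(\sigma)$ adjacent to $M_0(\sigma)$ and lying in $\sigma \setminus \pi$, contradicting the definition of the pillar. Consequently every sphere of $\sigma \setminus \pi$ contained in $M_0(\pi)$ is a loop at $M_0(\sigma)$; letting $\ell_0$ count these loops, I obtain $v(M_0(\sigma)) = |\pi| + 2\ell_0$ and, since cutting $M_0(\pi)$ along $\ell_0$ spheres whose complement remains connected reduces $\rk \pi_1$ by exactly $\ell_0$ (order the cuts so each is non-separating in the current manifold and apply van Kampen), $g(M_0(\sigma)) = g(M_0(\pi)) - \ell_0$. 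Adding these gives $v(M_0(\sigma)) + 2g(M_0(\sigma)) = |\pi| + 2g(M_0(\pi))$, depending only on $\pi$.

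The point that requires the most care is the structural step: one must verify that non-loop adjacency to $M_0(\sigma)$ of a sphere in $\sigma \setminus \pi$ sitting inside $M_0(\pi)$ is genuinely forced, independently of how $\sigma$ is subdivided away from the basepoint, and this rests on the fact that the ``loop vs.\ non-loop'' status in $\Gamma(\sigma)$ depends only on which component of $M-\sigma$ each side of the sphere lies in. A much shorter alternative is to directly cite \cite[Corollary 2.3]{Aygun2022}, which encodes precisely the invariance of degree under modifications of $\sigma$ that preserve its pillar.
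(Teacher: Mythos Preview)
Your argument is correct. The paper's own proof of this lemma is literally the single sentence ``The following follows from \cite[Corollary 2.3]{Aygun2022}''---exactly the citation you offer as a shorter alternative at the end.

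Your main argument is a genuine addition rather than a different route to the same place: you derive the closed-form expression $\deg(\sigma)=2n-|\pi|-2g(M_0(\pi))$, which makes the dependence on the pillar completely explicit. The global computation $\sum_x(v(x)-2+2g(x))=2n-2$ via handshaking plus the graph-of-groups rank identity is clean, and the structural step (no non-basepoint vertex of $\Gamma(\sigma)$ lies in $M_0(\pi)$, hence every sphere of $\sigma\setminus\pi$ inside $M_0(\pi)$ is a loop at $M_0(\sigma)$) is exactly the right observation; your justification via connectivity of $M_0(\pi)$ and the definition of the pillar is sound. The one place a referee might ask for a line is the claim that cutting along $\ell_0$ spheres with connected complement drops the rank by exactly $\ell_0$: your parenthetical about ordering so each cut is non-separating handles it, and one can note that if every sphere in the collection were separating in the ambient manifold, then the complement would already be disconnected, so some sphere is non-separating and induction applies. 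What your approach buys over the bare citation is an explicit formula that could be reused elsewhere in the paper (and indeed this kind of local-vs-global count is morally what \cite{Aygun2022} proves); what the citation buys is brevity.
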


\begin{lemma} \label{inD}
Let $\sigma$ and $\tau$ be simplices of $\sphere$. If they have the same pillar and $\sigma \in \geomPD$, then $\tau \in \geomPD$.
\end{lemma}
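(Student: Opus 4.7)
The plan is to show that membership in $\geomPD$ is controlled entirely by the pillar, so that if $\sigma$ and $\tau$ share the same pillar $\pi$, they simultaneously satisfy or fail the two defining conditions of $\geomPD$ (non-simply-connected basepoint component, or basepoint being a cut vertex of the dual graph).

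First, I would record a structural decomposition of any sphere system in terms of its pillar. By the very definition of pillar, every sphere in $\sigma \setminus \pi$ either (a) gives a loop edge at $\boundcomp(\sigma)$ in $\Gamma(\sigma)$, and so lies inside the basepoint component, or (b) gives an edge not incident to $\boundcomp(\sigma)$, and so lies entirely inside some non-basepoint component of $M - \pi$. The symmetric statement holds for $\tau \setminus \pi$. In particular, $\sigma \setminus \pi$ restricted to any non-basepoint component $C$ of $M-\pi$ is a sphere system in $C$, and similarly for $\tau$; but the piece lying in the basepoint component $\boundcomp(\pi)=M_0(\pi)$ consists only of spheres that are non-separating there (any separating one would create a new non-loop edge at the basepoint, contradicting pillar-minimality).

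Next, I would verify the cut-vertex criterion. Removing $\boundcomp(\sigma)$ and its incident edges from $\Gamma(\sigma)$ leaves a graph which decomposes, by the observation above, as the disjoint union (over the non-basepoint components $C_1,\ldots,C_m$ of $M - \pi$) of the dual graph of $(\sigma\setminus\pi)\cap C_i$. Each such dual graph is connected because $C_i$ is connected. Hence $\Gamma(\sigma)\setminus\{\boundcomp(\sigma)\}$ has exactly $m$ components, and the same count holds for $\tau$. So $\boundcomp(\sigma)$ is a cut vertex iff $m\geq 2$ iff $\boundcomp(\tau)$ is a cut vertex, which is a property of $\pi$ alone.

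For the simply-connected criterion, I would argue analogously that $\pi_1(\boundcomp(\sigma))$ is determined by $\pi$: the basepoint component is obtained from $M_0(\pi)$ by cutting along the loop-spheres at the basepoint, and the pillar structure pins down the relevant genus data (here one uses that any sphere in $\sigma\setminus\pi$ inside $M_0(\pi)$ is forced to be non-separating, together with Lemma \ref{3.2}, to match the contribution counted by the pillar). Once this is verified, both $\geomPD$-criteria are invariants of $\pi$, so $\sigma\in\geomPD$ iff $\tau\in\geomPD$. I expect the main subtlety to be this last step, namely reconciling the loop-spheres inside $M_0(\pi)$ between $\sigma$ and $\tau$; all other parts of the argument reduce directly to the star-like shape of $\Gamma(\pi)$ forced by the definition of pillar and the combinatorics of dual graphs under sphere refinement.
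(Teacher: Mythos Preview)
Your plan has a genuine gap: you try to show that each of the two defining conditions of $\geomPD$ (basepoint vertex is cut; basepoint component not simply connected) is \emph{individually} a pillar-invariant. Neither is.

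Your cut-vertex argument computes the wrong thing. You remove ``$\boundcomp(\sigma)$ and its incident edges'' and count $m$ components, but the definition of cut vertex asks whether $|\Gamma(\sigma)|\setminus\{\boundcomp(\sigma)\}$ is disconnected. That space still contains the open interior of every loop at $\boundcomp(\sigma)$, and each such loop contributes its own connected component. So the correct count is $\ell+m$, where $\ell$ is the number of loop-spheres at the basepoint in $\sigma$; this depends on $\sigma$, not just on the pillar. Likewise $g(\boundcomp(\sigma))=g(\boundcomp(\pi))-\ell$ depends on $\ell$. A concrete example: take $\pi$ a single separating sphere with $m=1$ and $g(\boundcomp(\pi))=1$. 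Then $\sigma=\pi$ is in $\geomPD$ because $g(\boundcomp(\sigma))=1>0$, but $\boundcomp(\sigma)$ is \emph{not} a cut vertex; whereas $\sigma'=\pi\cup\{\text{one loop at the basepoint}\}$ has $g(\boundcomp(\sigma'))=0$ but $\boundcomp(\sigma')$ \emph{is} a cut vertex. Both lie in $\geomPD$, yet they witness opposite clauses.

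What actually holds is that the \emph{disjunction} of the two conditions is a pillar-invariant, and the proof must exploit the trade-off: adding a loop at the basepoint decreases $g(\boundcomp)$ by one while adding one component to $|\Gamma|\setminus\{\boundcomp\}$. One clean way is to observe that $\sigma\in\geomPD$ iff $g(\boundcomp(\pi))>\ell$ or $\ell+m\geq 2$, and then check this disjunction is independent of $\ell\in\{0,\ldots,g(\boundcomp(\pi))\}$. The paper instead connects $\Gamma(\sigma)$ to $\Gamma(\tau)$ by a sequence of elementary moves (collapsing a loop at any vertex, or collapsing a non-loop edge away from the basepoint) and checks that each move preserves membership in $\geomPD$; the loop-at-basepoint case is exactly where one condition turns into the other. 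Your acknowledged ``subtlety'' about reconciling loop-spheres is not a technical detail but the heart of the matter, and Lemma~\ref{3.2} on degrees does not help here.
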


\begin{proof}
As $\sigma$ and $\tau$ have the same pillars, the graph $\Gamma(\sigma)$ can be obtained from $\Gamma(\tau)$ by a sequence of the following two operations and their inverses: 
\begin{itemize}
    \item collapsing a loop at any vertex $v$ and increasing the label of this vertex by one;
    \item collapsing an edge with distinct endpoints $v\neq w$ that are both different from the basepoint vertex $\boundcomp(\sigma)$ to a new vertex with label $g(v)+g(w)$.
\end{itemize}
We will assume that $\Gamma(\tau)$ is obtained from $\Gamma(\sigma)$ by one of these operations and will check that $\sigma$ is a simplex of $\geomPD$ if and only if $\tau$ is. For this, we use \cref{eq:in_geomPD_graph}.

First consider the case that $\Gamma(\tau)$ is obtained from $\Gamma(\sigma)$ by collapsing a loop. If this loop is not adjacent to the basepoint vertex $\boundcomp(\sigma)$, then clearly $\sigma\in \geomPD$ if and only if $\tau\in \geomPD$. If the loop is adjacent to $\boundcomp(\sigma)$, then we have $\sigma\in \geomPD$ because $\boundcomp(\sigma)$ is a cut vertex (it has a loop adjacent to it) and also $\tau\in \geomPD$ because $g(\boundcomp(\tau)) = g(\boundcomp(\sigma))+1>0$.

Now assume that $\Gamma(\tau)$ is obtained from $\Gamma(\sigma)$ by collapsing an edge that does not meet $\boundcomp(\sigma)$. Then $g(\boundcomp(\sigma)) = g(\boundcomp(\tau))$ and $|\Gamma(\tau)| \setminus \{\boundcomp(\tau) \}$ is obtained from $|\Gamma(\sigma)| \setminus \{\boundcomp(\sigma) \}$ by collapsing an edge. This does not change the set of path components so $\boundcomp(\sigma)$ is a cut vertex in $\Gamma(\sigma)$ if and only if $\boundcomp(\tau)$ is a cut vertex in $\Gamma(\tau)$. Using \cref{eq:in_geomPD_graph}, this shows that $\sigma\in\geomPD$ if and only $\tau\in\geomPD$.  
\end{proof}

\begin{proposition}
\label{prop:geomPD_simplicial}
    If $\sigma$ is a sphere system in $\geomPD$ and $\tau \subseteq \sigma$, then $\tau$ is in $\geomPD$. In other words, $\geomPD$ is a simplicial complex.
\end{proposition}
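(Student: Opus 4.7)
The plan is to induct on $|\sigma \setminus \tau|$ and reduce to the case $\tau = \sigma \setminus \{[S]\}$ where $\tau$ is obtained from $\sigma$ by removing a single sphere; equivalently, the dual graph $\Gamma(\tau)$ arises from $\Gamma(\sigma)$ by contracting the edge $e$ corresponding to $[S]$. Throughout, I will work with the equivalent characterization from \cref{eq:in_geomPD_graph}: $\delta \in \geomPD$ if and only if $g(\boundcomp(\delta)) > 0$ or $\boundcomp(\delta)$ is a cut vertex of $\Gamma(\delta)$. Monotonicity of $\pi_1$ as in \cref{lm:sigma_to_factor} gives $g(\boundcomp(\tau)) \geq g(\boundcomp(\sigma))$, so the conclusion is immediate unless $g(\boundcomp(\sigma)) = g(\boundcomp(\tau)) = 0$ and $\boundcomp(\sigma)$ is a cut vertex of $\Gamma(\sigma)$. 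Under these standing assumptions, let $C_1, \ldots, C_k$ (with $k \geq 2$) be the connected components of the graph $\Gamma(\sigma) \setminus \{\boundcomp(\sigma)\}$; the goal reduces to showing that $\boundcomp(\tau)$ is a cut vertex of $\Gamma(\tau)$.

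Next, I split into cases according to the location of $e$ in $\Gamma(\sigma)$. If $e$ is not incident to $\boundcomp(\sigma)$, then $\boundcomp(\tau) = \boundcomp(\sigma)$, and contracting $e$ either removes a loop at some $v \neq \boundcomp(\sigma)$ or merges two vertices lying in a common $C_i$ (since the edge $e$ itself remains inside $\Gamma(\sigma) \setminus \{\boundcomp(\sigma)\}$, connecting its endpoints). In either subcase the list of components of $\Gamma \setminus \{\boundcomp\}$ is undisturbed, so $\boundcomp(\tau)$ remains a cut vertex. If $e$ is a loop at $\boundcomp(\sigma)$, then $g(\boundcomp(\tau)) = g(\boundcomp(\sigma)) + 1 \geq 1$, contradicting our standing assumption. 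The only remaining case is $e = \boundcomp(\sigma)w$ with $w \neq \boundcomp(\sigma)$, in which $g(\boundcomp(\tau)) = g(\boundcomp(\sigma)) + g(w)$ forces $g(w) = 0$. Let $C$ be the component of $\Gamma(\sigma) \setminus \{\boundcomp(\sigma)\}$ containing $w$. If $V(C) \neq \{w\}$, then $C \setminus \{w\}$ is non-empty and, together with the $k - 1$ untouched $C_j$'s, contributes at least $k \geq 2$ components to $\Gamma(\tau) \setminus \{\boundcomp(\tau)\}$, so $\boundcomp(\tau)$ is a cut vertex.

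The hard part will be the remaining subcase $V(C) = \{w\}$: here the whole component $C$ is absorbed into $\boundcomp(\tau)$, leaving only $k - 1$ components in the graph $\Gamma(\tau) \setminus \{\boundcomp(\tau)\}$, which can equal just $1$ when $k = 2$. The key input is that since $w$ is a non-basepoint vertex with $g(w) = 0$, the sphere system condition forbids $M_w \cong M_{0,1}$, forcing the degree of $w$ in $\Gamma(\sigma)$ to be at least $2$. Writing this degree as $p + 2\ell$, where $p \geq 1$ is the number of parallel edges $\boundcomp(\sigma)w$ (one of which is $e$) and $\ell \geq 0$ is the number of loops at $w$, contracting $e$ produces at least $(p - 1) + \ell$ loops at $\boundcomp(\tau)$ in $\Gamma(\tau)$. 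Each such loop contributes a further connected component to $|\Gamma(\tau)| \setminus \{\boundcomp(\tau)\}$; a short verification splitting on whether $\ell = 0$ (so $p \geq 2$) or $\ell \geq 1$ shows $(p - 1) + \ell \geq 1$, so $|\Gamma(\tau)| \setminus \{\boundcomp(\tau)\}$ has at least $1 + (k - 1) = k \geq 2$ components. Therefore $\boundcomp(\tau)$ is a cut vertex of $\Gamma(\tau)$, completing the proof.
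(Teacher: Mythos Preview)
Your proof is correct and follows essentially the same strategy as the paper: reduce to removing a single sphere and case-analyze on the position of the corresponding edge $e$ in $\Gamma(\sigma)$ relative to $\boundcomp(\sigma)$. The paper packages the analysis via a wedge decomposition $\Gamma(\sigma) = \bigvee_{i=1}^k \Gamma_i$ at the basepoint and argues that if $\boundcomp(\tau)$ fails to be a cut vertex then $k=2$ and one summand collapses to a point, whence $e$ is a loop at $\boundcomp(\sigma)$ and $g(\boundcomp(\tau))=1$. You instead track the components $C_i$ directly and, in the subcase $V(C)=\{w\}$, invoke the no-disk-bounding condition to force the valence of $w$ to be at least $2$ and then count the resulting loops at $\boundcomp(\tau)$; this is arguably a bit more thorough than the paper's phrasing, which does not separately address the possibility that $\Gamma_j$ is a single non-loop edge to a vertex of positive genus (a case that also gives $g(\boundcomp(\tau))\geq 1$).

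One small point of care: your $C_i$ should be read as the components of the \emph{topological} deletion $|\Gamma(\sigma)| \setminus \{\boundcomp(\sigma)\}$ (so that loops at $\boundcomp(\sigma)$ each contribute their own component), matching the paper's definition of cut vertex; otherwise the assertion $k\geq 2$ need not hold. With that reading your counts go through as written.
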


\begin{proof}
 First assume that $g(\boundcomp(\sigma))>0$. \cref{lm:sigma_to_factor} implies $$g(\boundcomp(\tau)) \geq g(\boundcomp(\sigma))>0,$$ so $\tau \in \geomPD$.

Now assume that $\boundcomp(\sigma)$ is cut. Thus, $\Gamma(\sigma)$ decomposes as a wedge $$\Gamma(\sigma) = \bigvee_{i=1}^k \Gamma_k$$ for a collection of non-trivial connected based graphs $\Gamma_1, \ldots, \Gamma_k$. It suffices to consider the case that $\tau$ is $\sigma$ with a single sphere removed. Note that $\Gamma(\tau)$ is obtained from $\Gamma(\sigma)$ by collapsing a single edge which we will call $e$. Assume that $e$ is an edge of $\Gamma_j$. We have that $$\Gamma(\tau) = \bigvee_{i=1}^k \Gamma_i'$$ with $\Gamma_i'=\Gamma_i$ for $i \neq j$ and $\Gamma_j'$ obtained from by $\Gamma_j$ by collapsing $e$. If $\boundcomp(\tau)$ is cut, then $\tau \in \geomPD$ so assume otherwise. 
Since $\boundcomp(\tau)$ is not cut, we must have  $k=2$ and $\Gamma_j'$ must be a single vertex. Thus $e$ is the only edge of $\Gamma_j$ and $e$ is a loop at $\boundcomp(\sigma)$. This implies that $$g(\boundcomp(\tau))=1$$ and so $\tau \in \geomPD$.
\end{proof}

Note that Aygun and the second author \cite[Corollary 3.3]{Aygun2022} checked that $\sphere^{\leq l}$ is a sub\emph{complex} of $\sphere$.
The following is closely related to Hatcher--Vogtmann \cite[Lemma 5.2 (iii)]{HatcherVogtmannCerf}. Note that their complexes of sphere systems do not include separating spheres so we cannot directly quote their results.

\begin{proposition} \label{5.2}
We have an inclusion of simplicial complexes $\sphere^{\leq n-2} \subseteq \geomPD$.
\end{proposition}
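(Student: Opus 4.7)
The plan is to assume for contradiction that some $\sigma\in \sphere^{\leq n-2}$ fails to lie in $\geomPD$ and derive a contradiction by analyzing the dual graph $\Gamma(\sigma)$.

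First, I would reformulate the degree so as to isolate the basepoint vertex. Write $V$ for the number of vertices of $\Gamma(\sigma)$ and $E = |\sigma|$ for the number of edges. The handshake lemma gives $\sum_x v(x) = 2E$ (loops counted twice), while van Kampen's theorem together with the identification $\pi_1(M) \cong F_n$ yields $n = \sum_x g(x) + E - V + 1$. Combining these, one gets $\sum_x (v(x) - 2 + 2g(x)) = 2n - 2$, and subtracting off the contribution of the basepoint produces the clean identity
\begin{equation*}
    \deg(\sigma) = 2n - v(\boundcomp(\sigma)) - 2g(\boundcomp(\sigma)).
\end{equation*}
In particular, $\deg(\sigma) \leq n-2$ is equivalent to $v(\boundcomp(\sigma)) + 2g(\boundcomp(\sigma)) \geq n+2$.

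Using $\sigma\notin\geomPD$, I would then extract $g(\boundcomp(\sigma))=0$ and the fact that $\boundcomp(\sigma)$ is not a cut vertex, so the identity above forces $v(\boundcomp(\sigma)) \geq n+2$. I would next rule out loops at $\boundcomp(\sigma)$ by the following observation: each loop contributes a separate open arc to $|\Gamma(\sigma)|\setminus\{\boundcomp(\sigma)\}$, so two or more loops, or a single loop together with any other vertex of $\Gamma(\sigma)$, would make $\boundcomp(\sigma)$ a cut vertex. The only remaining case is $V = 1$ with a single loop, which forces $g(\boundcomp(\sigma)) = n-1 = 0$, hence $n \leq 1$; but then $\sphere^{\leq n-2} = \emptyset$ and the statement is vacuous. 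Consequently every one of the $m := v(\boundcomp(\sigma)) \geq n+2$ edges at $\boundcomp(\sigma)$ is a non-loop edge, and in particular $V \geq 2$.

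Finally, I would finish with a short counting argument. The subgraph $\Gamma(\sigma) - \boundcomp(\sigma)$ has $V-1$ vertices and $E - m$ edges, and is connected by hypothesis, so $E - m \geq V - 2$. On the other hand, $\sum_{x\neq \boundcomp(\sigma)} g(x) = n - E + V - 1 \geq 0$ gives $E \leq n+V-1$. Combining the two inequalities yields $m \leq n+1$, contradicting $m \geq n+2$. A last small point is that the resulting set-theoretic inclusion $\sphere^{\leq n-2} \subseteq \geomPD$ upgrades to one of simplicial complexes, since both sides are subcomplexes of $\sphere$ by \cite[Corollary 3.3]{Aygun2022} and \cref{prop:geomPD_simplicial}. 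The main obstacle is really just pinning down the degree identity and the loop case-analysis cleanly; everything else is elementary graph combinatorics.
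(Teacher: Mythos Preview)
Your argument is correct and takes a genuinely different route from the paper. The paper first passes to the \emph{pillar} $\tau$ of $\sigma$ and invokes \cref{3.2} and \cref{inD} to reduce to the case where $\Gamma(\tau)$ is a dipole (two vertices, all edges between them); the contradiction then drops out of two short equations. You instead work directly with $\Gamma(\sigma)$: the identity $\deg(\sigma)=2n-v(\boundcomp)-2g(\boundcomp)$ (which the paper never writes down) converts the degree bound into $v(\boundcomp)\geq n+2$, and a connectivity-plus-Euler count on $\Gamma(\sigma)-\boundcomp$ finishes. Your approach is more self-contained, avoiding the pillar lemmas entirely and incidentally also handling the degenerate $V=1$ case that the paper's proof passes over silently (the pillar would be empty there). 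The paper's route, on the other hand, is shorter once the pillar machinery is in hand, and that machinery is needed anyway for the main connectivity argument in \cref{prop:connectivity_D}.
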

\begin{proof}
    Let $\sigma\in \sphere$ with $\deg(\sigma) \leq n-2$ and let $\tau$ be the pillar of $\sigma$.  By \cref{3.2}, $\deg(\tau) \leq n-2$. By \cref{inD}, it suffices to prove that $\tau$ is in $\geomPD$. Assume that this was not the case, i.e.~that $g(M_0(\tau))=0$ and $M_0(\tau)$ is not a cut vertex of $\Gamma(\tau)$. Since $M_0(\tau)$ is not a cut vertex and $\tau$ is a pillar, the graph $\Gamma(\tau)$ must have exactly two vertices and every edge connects $M_0(\tau)$ to the other vertex $x$. In particular, we have
    \begin{equation*}
        \deg(\tau) = v(x) + 2 g(x)-2 = e+2g(x)-2,
    \end{equation*}
    where $e$ is the number of edges of $\Gamma(\tau)$.    
    Since $g(M_0(\tau))=0$ and $\pi_1(M)$ has rank $n$, we also get
    \begin{equation*}
        n = e+g(x)-1.
    \end{equation*}
    Combining these equations, we obtain
    \begin{equation*}
        \deg(\tau) = n+g(x)-1.
    \end{equation*}
    As the latter is greater than or equal to $n-1$, we get a contradiction. Hence, we have that $g(M_0(\tau))=0$ or $M_0(\tau)$ is a cut vertex, so $\tau\in \geomPD$.
\end{proof}

\begin{theorem}

\label{prop:connectivity_D}
$\geomPD$ is $(2n-4)$-connected.
\end{theorem}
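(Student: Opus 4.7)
The connectivity bound $(2n-4)$ matches exactly what one obtains by combining the Aygun--Miller degree theorem with \cref{5.2}: together they show that the subcomplex $\sphere^{\leq n-2} \subseteq \geomPD$ is already $(2n-4)$-connected. The task is therefore to argue that the inclusion $\sphere^{\leq n-2} \hookrightarrow \geomPD$ does not destroy this connectivity.

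My plan is to adapt Hatcher--Vogtmann's proof of \cite[Proposition 6.2]{HatcherVogtmannCerf}, where they establish that $\geomD_n$ is $(n-3)$-connected. Given $k \leq 2n-4$ and a simplicial map $f \colon S^k \to \geomPD$, I would first use contractibility of $\sphere$ (taking $l$ large in the degree theorem) to extend $f$ to a simplicial map $\bar f \colon D^{k+1} \to \sphere^{\leq L}$ for some $L$. I would then homotope $\bar f$ rel $S^k$, by downward induction on $L$, until the image lies in $\sphere^{\leq n-2}$. Each inductive step removes maximal-dimensional simplices $\bar\tau \subseteq D^{k+1}$ with $\deg(\bar f(\bar\tau)) = L$ by replacing $\bar f|_{\st(\bar\tau)}$ using the high connectivity of $\Lk_\sphere(\bar f(\bar\tau))$. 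This link factors as a join of sphere-system complexes of the smaller $3$-manifolds arising as the connected components of $M - \bar f(\bar\tau)$, and each factor is highly connected by a further application of the degree theorem. Once $L \leq n-2$, \cref{5.2} places the image inside $\geomPD$, completing a null-homotopy of $f$ in $\geomPD$.

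The main technical obstacle is controlling which simplices are traversed during the degree-reduction: one needs every intermediate sphere system to lie in $\geomPD$ and not merely in $\sphere$. Here \cref{3.2} and \cref{inD} should play a central role, since they say that both the degree and $\geomPD$-membership of a simplex are invariants of its pillar. Thus the degree-reduction can be organized at the level of pillars, reducing the problem to verifying that each pillar traversed during the reduction lies in $\geomPD$. I expect this verification to require a case analysis in the spirit of \cite[Section 6]{HatcherVogtmannCerf}, separating the case where the basepoint vertex of the dual graph of the pillar is a cut vertex from the case where its associated component has positive genus.
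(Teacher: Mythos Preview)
Your ingredients are exactly the right ones, but the organization has a genuine gap. You plan to extend $f$ to $\bar f\colon D^{k+1}\to \sphere$ and then run the degree-reduction \emph{rel $S^k$}. This cannot work as stated: the simplices you need to modify are those mapping to a pillar of the current maximal degree $L$, and such simplices may lie on the boundary sphere---elements of $\geomPD$ have no a priori degree bound. If you modify them you are no longer working rel $S^k$; if you leave them alone you can never push $L$ below $n-2$. There is no mechanism in your plan that confines the high-degree pillars to the interior of the disk.

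The paper avoids this by never passing to the disk. It runs the degree-reduction directly on the map $f\colon S^k\to \geomPD$, producing a homotopy \emph{inside $\geomPD$}. This is precisely where \cref{inD} is used, and it is cheaper than you expect: at each step one takes a maximal-dimensional $\Sigma\subset S^k$ with $f(\Sigma)$ a degree-$L$ pillar. Because $f$ already maps into $\geomPD$, the pillar $f(\Sigma)$ lies in $\geomPD$; every new simplex $\theta * f(\Sigma)$ created by coning off the link lies in $Y(\boundcomp)*\sphere(M_1)*\cdots*\sphere(M_k)$ joined with $f(\Sigma)$ and hence has the same pillar, so \cref{inD} places it in $\geomPD$ automatically. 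No separate case analysis on ``cut vertex vs.\ positive genus'' is needed---that dichotomy was already absorbed into the proof of \cref{inD}. Once the maximal degree drops to $n-2$, the image sits in $\sphere^{\leq n-2}\subseteq \geomPD$ by \cref{5.2}, and the degree theorem gives the null-homotopy there.

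So the fix is simply to reorder your two steps: homotope the sphere within $\geomPD$ first, then cone off inside $\sphere^{\leq n-2}$.
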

\begin{proof}
Let $\thedegree \leq 2n-4$ and consider a map $f:S^\thedegree \m \geomPD$ from a $\thedegree$-sphere into $\geomPD$. Our goal is to show that $f$ is null-homotopic. By simplicial approximation, we may assume that $f$ is simplicial with respect to some combinatorial simplicial complex structure on $S^\thedegree$ which we call $X$. We will show that $f$ is homotopic to a map that factors through $\sphere^{\leq n-2} \subset \geomPD$. Since $\sphere^{\leq n-2}$ is $(2n-4)$-connected, this will show that $f$ is null-homotopic. 

Let $\themaxdegree=\max_{\Sigma \in X}\deg(f(\Sigma))$ and assume $\themaxdegree>n-2$. Let $\Sigma$ be a maximal-dimensional simplex of $X$ with $f(\Sigma)$ a pillar and $\deg(f(\Sigma))=\themaxdegree$. Let $\boundcomp$ denote the basepoint component of $M - f(\Sigma)$ and let $M_1, \ldots, M_k$ denote the other components. Let $Y(\boundcomp)$ denote the subcomplex of $\sphere(\boundcomp)$ of non-separating sphere systems. The proof of \cite[Theorem 3.5]{Aygun2022} checks that the link $\Link_{X}(\Sigma)$ maps to 
$$Y(\boundcomp) * \sphere(M_1) * \cdots *\sphere(M_k). $$
Moreover, in the proof of \cite[Lemma 3.3]{Aygun2022} it is stated that this join is $(n+\themaxdegree-\dim(f(\Sigma))-3) > (2n-5-\dim(\Sigma))$-connected. As $\Link_{X}(\Sigma)$ is a $(2n-5-\dim(\Sigma))$-sphere, this implies that its image is null-homotopic. Thus, there is a combinatorial simplicial complex structure on $\Cone(\Link_{X}(\Sigma))$ and an extension
$$h:\Cone(\Link_{X}(\Sigma)) \m
Y(\boundcomp) * \sphere(M_1) * \cdots * \sphere(M_k).$$
Let $Z$ be the simplicial complex structure on $S^\thedegree$ obtained from $X$ by replacing $\Star(\Sigma)$ with
$$\partial \Sigma * \Cone(\Link_{X}(\Sigma))$$
and let $g:Z \m \sphere(M)$ be the map that is $f$ on the vertices of $Z$ that are also vertices of $X$ and is $h$ on the new vertices.

We claim that $g$ maps to $\geomPD$ and that $f$ and $g$ are homotopic as maps to $\geomPD$. We can do this by checking that if $\theta$ is a simplex of $$Y(\boundcomp) * \sphere(M_1) * \dots * \sphere(M_k),$$ then $\theta * f(\Sigma)$ is a simplex of $\geomPD$. This follows from \cref{inD} since $f(\Sigma)$ and $\theta * f(\Sigma)$ have the same pillar.

The proof of \cite[Theorem 3.5]{Aygun2022} implies that $Z$ has one less maximal-dimensional simplex that maps to a degree-$\themaxdegree$ pillar than $X$. Note that \cref{3.2} implies that if the image of a map to $\sphere$ contains a simplex of degree $\themaxdegree$, then it also contains a pillar of degree $\themaxdegree$. Thus, iterating this process produces a homotopy through $\geomPD$ to a map to $\sphere^{\leq n-2} \subseteq \geomPD$. As noted earlier, the complex $\sphere^{\leq n-2}$ is $(2n-4)$-connected so we conclude that $f$ is null-homotopic.
\end{proof}

We now prove \cref{thm:mainCB}, which says that $\CB \simeq \bigvee S^{2n-3}$.

\begin{proof}[Proof of \cref{thm:mainCB}]
    Since $\CB \simeq \PD$ (\cref{thm:CBandPD}) and $\PD$ is $(2n-3)$-dimensional, it suffices to show that $\CB$ is $(2n-4)$-connected. Since $\CB \simeq \geomPD$ (see \cref{eq:he_CB_PD_geomPD}), and $\geomPD$ is $(2n-4)$-connected (\cref{prop:connectivity_D}), the result follows. 
\end{proof}

\printbibliography

@Misc{HM:Relativefreesplitting,
  author      = {Michael Handel and Lee Mosher},
  title       = {Relative free splitting and free factor complexes {I}: {H}yperbolicity},
  abstract    = {We study the large scale geometry of the relative free splitting complex and the relative complex of free factor systems of the rank $n$ free group $F_n$, relative to the choice of a free factor system of $F_n$, proving that these complexes are hyperbolic. Furthermore we present the proof in a general context, obtaining hyperbolicity of the relative free splitting complex and of the relative complex of free factor systems of a general group $\Gamma$, relative to the choice of a free factor system of $\Gamma$. The proof yields information about coarsely transitive families of quasigeodesics in each of these complexes, expressed in terms of fold paths of free splittings.},
  date        = {2014-07-13},
  doi         = {10.48550/arXiv.1407.3508},
  eprint      = {http://arxiv.org/abs/1407.3508v2},
  eprintclass = {math.GR},
  eprinttype  = {arXiv},
  keywords    = {math.GR, 20F65 (primary) 57M07 (secondary)},
}

@Article{BF:topologyinfinity,
  author   = {Bestvina, Mladen and Feighn, Mark},
  journal  = {Inventiones Mathematicae},
  title    = {The topology at infinity of {${\rm Out}(F_n)$}},
  year     = {2000},
  issn     = {0020-9910},
  number   = {3},
  pages    = {651--692},
  volume   = {140},
  comment  = {Out(F_n) is a virtual duality group
authors show connectivity at infinity
construct Bestvina-Feighn bordification of Outer space
work with reduced Outer space throughout the paper},
  doi      = {10.1007/s002220000068},
  keywords = {20E36 (20E05 20F65 57M07)},
  mrnumber = {1760754},
}

@Article{BSV:bordificationouterspace,
  author   = {Bux, Kai-Uwe and Smillie, Peter and Vogtmann, Karen},
  journal  = {Journal of the London Mathematical Society. Second Series},
  title    = {On the bordification of outer space},
  year     = {2018},
  issn     = {0024-6107},
  number   = {1},
  pages    = {12--34},
  volume   = {98},
  doi      = {10.1112/jlms.12124},
  keywords = {20E36 (20E05 20F65 57M07)},
  mrnumber = {3847230},
}

@Article{Wade2024a,
  author        = {Wade, Richard D. and Wasserman, Thomas A.},
  journal       = {International Mathematics Research Notices. IMRN},
  title         = {Cohen--Macaulay Complexes, Duality Groups, and the dualizing module of ${\rm{Out}}(F_N)$},
  year          = {2025},
  number        = {6},
  abstract      = {We explain how Cohen--Macaulay classifying spaces are ubiquitous among discrete groups that satisfy Bieri--Eckmann duality, and compare Bieri--Eckmann duality to duality results for Cohen--Macaulay complexes. We use this comparison to give a description of the dualizing module of ${\rm{Out}}(F_N)$ in terms of the local cohomology cosheaf of the spine of Outer space.},
  archiveprefix = {arXiv},
  copyright     = {Creative Commons Attribution 4.0 International},
  doi           = {10.1093/imrn/rnaf059},
  eprint        = {2405.05881},
  keywords      = {Group Theory (math.GR), Algebraic Topology (math.AT), FOS: Mathematics, 55M05, 20J06 (Primary) 20F34, 20F65 (Secondary)},
  primaryclass  = {math.GR},
}

@PhdThesis{Bru:buildingsfreefactora,
  author = {Benjamin Br{\"u}ck},
  school = {Bielefeld University},
  title  = {Between buildings and free factor complexes},
  year   = {2020},
  url    = {https://pub.uni-bielefeld.de/record/2940020},
  eprint = {https://pub.uni-bielefeld.de/record/2940020},
  file   = {:Promotion/Dissertation/Dissertation_Brueck.pdf:PDF},
  groups = {Artin groups, Dualising module for Out(F_n), Free Factor complex with Radhika, Out(F_n), Outer space & free splitting complex, Poset and combinatorial Topology, RAAGs, Relative automorphism groups, Promotion Zarah, My articles, Emmy Noether, Aut(RAAGs), Master thesis Lagast, Free factor complex, Research statement, partial_basis_Out},
}

@Misc{Vogtmann2024,
  author        = {Vogtmann, Karen},
  note          = {to appear in Proceedings of TIFR International Colloquium on Randomness, Geometry and Dynamics, 1-12 January 2024},
  title         = {The boundary of bordified {O}uter space},
  year          = {2024},
  archiveprefix = {arXiv},
  copyright     = {Creative Commons Attribution 4.0 International},
  doi           = {10.48550/ARXIV.2404.11371},
  eprint        = {2404.11371},
  keywords      = {Group Theory (math.GR), Geometric Topology (math.GT), FOS: Mathematics, 20E36, 20F65, 57M07},
  primaryclass  = {math.GR},
  publisher     = {arXiv},
}

@Article{Brueck2024b,
  author        = {Brück, Benjamin and Piterman, Kevin I.},
  journal       = {Bulletin of the London Mathematical Society},
  title         = {Connectivity of partial basis complexes of freely decomposable groups},
  year          = {2025},
  abstract      = {We show that the complex of partial bases of the free group of rank $n$, where vertices are seen up to conjugation, is Cohen--Macaulay of dimension $n-1$. This positively answers a conjecture raised by Day and Putman. We prove our results in the more general context of freely decomposable groups.},
  archiveprefix = {arXiv},
  copyright     = {arXiv.org perpetual, non-exclusive license},
  doi           = {10.1112/blms.70197},
  eprint        = {arXiv:2410.17121},
  keywords      = {Geometric Topology (math.GT), Group Theory (math.GR), FOS: Mathematics, 20F65, 20E06, 57M07},
  primaryclass  = {math.GT},
  publisher     = {arXiv},
}

@Article{Whi:CertainSetsElements,
  author   = {Whitehead, J. H. C.},
  title    = {On {C}ertain {S}ets of {E}lements in a {F}ree {G}roup},
  journal  = {Proceedings of the London Mathematical Society. Second Series},
  year     = {1936},
  volume   = {41},
  number   = {1},
  pages    = {48--56},
  issn     = {0024-6115},
  doi      = {10.1112/plms/s2-41.1.48},
  keywords = {DML},
  mrnumber = {1575455},
}

@Article{Whitehead1936,
  author   = {Whitehead, J. H. C.},
  journal  = {Annals of Mathematics. Second Series},
  title    = {On equivalent sets of elements in a free group},
  year     = {1936},
  issn     = {0003-486X},
  pages    = {782--800},
  volume   = {37},
  doi      = {10.2307/1968618},
  fjournal = {Annals of Mathematics. Second Series},
  language = {English},
  zbl      = {0015.24804},
  zbmath   = {3024148},
}

@Book{Hat:Algebraictopology,
  author    = {Hatcher, Allen},
  publisher = {Cambridge University Press, Cambridge},
  title     = {Algebraic topology},
  year      = {2002},
  isbn      = {9780521795401},
  keywords  = {55-01 (55-00)},
  mrnumber  = {1867354},
  pages     = {xii+544},
}

@Article{Piterman2024,
  AUTHOR = {Piterman, Kevin Ivan and Welker, Volkmar},
     TITLE = {Posets arising from decompositions of objects in a monoidal
              category},
   JOURNAL = {Forum Math. Sigma},
  FJOURNAL = {Forum of Mathematics. Sigma},
    VOLUME = {13},
      YEAR = {2025},
     PAGES = {Paper No. e123, 51},
      ISSN = {2050-5094},
   MRCLASS = {05E45 (06A07 20F65 57M07)},
  MRNUMBER = {4936407},
       DOI = {10.1017/fms.2025.10060},
}

@article {SorenAutFn,
    AUTHOR = {Galatius, S\o ren},
     TITLE = {Stable homology of automorphism groups of free groups},
   JOURNAL = {Ann. of Math. (2)},
  FJOURNAL = {Annals of Mathematics. Second Series},
    VOLUME = {173},
      YEAR = {2011},
    NUMBER = {2},
     PAGES = {705--768},
      ISSN = {0003-486X,1939-8980},
   MRCLASS = {20J06 (20E05 20F28 55N25 55P47 55R40 57R20)},
  MRNUMBER = {2784914},
MRREVIEWER = {Behrooz\ Mashayekhy},
       DOI = {10.4007/annals.2011.173.2.3},
}

@misc{BCGP,
	author = {Brown, F. and Chan, M. and Galatius, S. and Payne, S.},
	howpublished = {Preprint},
    year = {2024},
	note = {\url{https://arxiv.org/abs/2405.11528}},
	title = {{Hopf algebras in the cohomology of $\mathcal A_g$, $\operatorname{GL}_n(\mathbb Z)$, and $\operatorname{SL}_n(\mathbb Z)$}}}

@incollection{CFPconj,
	Author = {Church, Thomas and Farb, Benson and Putman, Andrew},
	Booktitle = {Algebraic topology: applications and new directions},
	Doi = {10.1090/conm/620/12366},
	Mrclass = {11F75 (20F28 20G10)},
	Mrnumber = {3290086},
	Mrreviewer = {Jean Raimbault},
	Pages = {55--70},
	Publisher = {Amer. Math. Soc., Providence, RI},
	Series = {Contemp. Math.},
	Title = {A stability conjecture for the unstable cohomology of {${\rm SL}_n\mathbb{Z}$}, mapping class groups, and {${\rm Aut}(F_n)$}},
	Volume = {620},
	Year = {2014},
	Bdsk-Url-1 = {https://doi.org/10.1090/conm/620/12366}}

@Article{BG:Homotopytypecomplex,
  author   = {Benjamin Br{\"u}ck and Radhika Gupta},
  journal  = {Proceedings of the London Mathematical Society. Third Series},
  title    = {Homotopy type of the complex of free factors of a free group},
  year     = {2020},
  issn     = {0024-6115},
  number   = {6},
  pages    = {1737--1765},
  volume   = {121},
  abstract = {We show that the complex of free factors of a free group of rank n > 1 is homotopy equivalent to a wedge of spheres of dimension n-2. We also prove that for n > 1, the complement of (unreduced) Outer space in the free splitting complex is homotopy equivalent to the complex of free factor systems and moreover is (n-2)-connected. In addition, we show that for every non-trivial free factor system of a free group, the corresponding relative free splitting complex is contractible.},
  doi      = {10.1112/plms.12381},
  file     = {:Aktuelle Versionen meiner Artikel/Benjamin Brück and Radhika Gupta - Homotopy Type of the Complex of Free Factors of a Free Group.pdf:PDF},
  groups   = {Outer space & free splitting complex, Connectivity of factor complexes, RAAG Free factor complex, Thesis, Promotion Zarah, My articles, Emmy Noether, Kevin and Volkmar, Overview CFP, Free factor complex, Research statement, partial_basis_Out},
  keywords = {20F65 (20E05 20F28 57M07)},
  mrnumber = {4144374},
}

@misc{KupAppendix,
  author = {Jonathan Campbell and Alexander Kupers and Inna Zakharevich},
  journal = {Preprint},
howpublished = {Preprint},
  title  = {A stable rank filtration on direct sum -theory},
  issn	 = {},
    year = {2025},
	doi  = {10.48550/arXiv.2501.01609},
}

@article {HarerVCD,
    AUTHOR = {Harer, John L.},
     TITLE = {The virtual cohomological dimension of the mapping class group
              of an orientable surface},
   JOURNAL = {Invent. Math.},
  FJOURNAL = {Inventiones Mathematicae},
    VOLUME = {84},
      YEAR = {1986},
    NUMBER = {1},
     PAGES = {157--176},
      ISSN = {0020-9910,1432-1297},
   MRCLASS = {32G15 (20F38 57N05)},
  MRNUMBER = {830043},
MRREVIEWER = {K.\ Vogtmann},
       DOI = {10.1007/BF01388737},
}

@misc{MPPreductive,
  author = {Jeremy Miller and Peter Patzt and Andrew Putman},
howpublished = {Preprint},
  title  = {Homological vanishing for the Steinberg representation II: reductive groups and integral conjectures},
  issn	 = {},
	doi  = {10.48550/arXiv.2509.01559},
}

@misc{AMP,
  author = {Avner Ash and Jeremy Miller and Peter Patzt},
howpublished = {Preprint},
  title  = {Hopf algebras, Steinberg modules, and the unstable cohomology of {$SL_n(\mathbb Z)$} and {$GL_n(\mathbb Z)$}},
    year={2024},
  issn	 = {},
	doi  = {10.48550/arXiv.2404.13776},
}

@misc{SteinbergPoly,
  author = {Steven Charlton and Danylo Radchenko and Daniil Rudenko},
  journal = {Preprint},
howpublished = {Preprint},
  title  = {Multiple polylogarithms and the Steinberg module},
  issn	 = {},
    year = {2025},
	doi  = {10.48550/arXiv.2505.02202},
}

@Article{K8,
  author = {Mathieu Dutour--Sikiri{\'c} and Philippe Elbaz-Vincent and Alexander Kupers and Jacques Martinet},
  journal = {Preprint},
  title  = {Voronoi complexes in higher dimensions, cohomology of {$GL_N(Z)$} for {$N \geq 8$}  and the triviality of {$K_8(Z)$}},
  issn	 = {},
	doi  = {https://arxiv.org/abs/1910.11598},
}

@article{10.1093/imrn/rnac330,
    author = {Himes, Zachary and Miller, Jeremy and Nariman, Sam and Putman, Andrew},
    title = "{The Free Factor Complex and the Dualizing Module for the Automorphism Group of a Free Group}",
    journal = {International Mathematics Research Notices},
    volume = {2023},
    number = {22},
    pages = {19020-19068},
    year = {2022},
    month = {12},
    abstract = "{Answering a question of Hatcher and Vogtmann, we prove that the top homology group of the free factor complex is not the dualizing module for \\$\\operatorname \\{Aut\\}(F\_\\{n\\})\\$, at least for \\$n = 5\\$.}",
    issn = {1073-7928},
    doi = {10.1093/imrn/rnac330},
    eprint = {https://academic.oup.com/imrn/article-pdf/2023/22/19020/53636671/rnac330.pdf},
}

@article {HatcherVogtmannCerf,
    AUTHOR = {Hatcher, Allen and Vogtmann, Karen},
     TITLE = {Cerf theory for graphs},
   JOURNAL = {J. London Math. Soc. (2)},
  FJOURNAL = {Journal of the London Mathematical Society. Second Series},
    VOLUME = {58},
      YEAR = {1998},
    NUMBER = {3},
     PAGES = {633--655},
      ISSN = {0024-6107},
   MRCLASS = {20E05 (20F28 57M07)},
  MRNUMBER = {1678155},
MRREVIEWER = {Michel Coornaert},
       DOI = {10.1112/S0024610798006644},
}

@Article{Brueck2024,
  author        = {Brück, Benjamin and Piterman, Kevin I. and Welker, Volkmar},
  journal       = {International Mathematics Research Notices. IMRN},
  title         = {The common basis complex and the partial decomposition poset},
  year          = {2024},
  issn          = {1073-7928},
  number        = {18},
  pages         = {12746--12760},
  archiveprefix = {arXiv},
  copyright     = {arXiv.org perpetual, non-exclusive license},
  doi           = {10.1093/imrn/rnae177},
  eprint        = {2402.10484},
  keywords      = {Combinatorics (math.CO), Algebraic Topology (math.AT), Group Theory (math.GR), K-Theory and Homology (math.KT), FOS: Mathematics, Primary 05E45, Secondary 29D50, 20E42, 55U10, 57M07},
  primaryclass  = {math.CO},
  publisher     = {arXiv},
}

@Article{Aygun2022,
  author   = {Aygun, Juliet and Miller, Jeremy},
  journal  = {Homology, Homotopy and Applications},
  title    = {A degree theorem for the simplicial closure of auter space},
  year     = {2024},
  issn     = {1532-0073},
  number   = {1},
  pages    = {189--199},
  volume   = {26},
  abstract = {The degree of a based graph is the number of essential nonbasepoint vertices after generic perturbation. Hatcher--Vogtmann's degree theorem states that the subcomplex of Auter space of graphs of degree at most d is (d-1)-connected. We extend the definition of degree to the simplicial closure of Auter space and prove a version of Hatcher--Vogtmann's result in this context.},
  doi      = {10.4310/HHA.2024.v26.n1.a13},
  keywords = {20F28,20F34,55U10},
  language = {English},
  zbl      = {1540.20060},
  zbmath   = {7860686},
}

@book {Laudenbach,
    AUTHOR = {Laudenbach, Fran\c cois},
     TITLE = {Topologie de la dimension trois: homotopie et isotopie},
    SERIES = {Ast\'erisque},
    VOLUME = {No. 12},
      NOTE = {With an English summary and table of contents},
 PUBLISHER = {Soci\'et\'e{} Math\'ematique de France, Paris},
      YEAR = {1974},
     PAGES = {i+152},
   MRCLASS = {57A10 (57D50)},
  MRNUMBER = {356056},
MRREVIEWER = {A.\ Hatcher},
}

@article {Hepworth,
    AUTHOR = {Hepworth, Richard},
     TITLE = {On the edge of the stable range},
   JOURNAL = {Math. Ann.},
  FJOURNAL = {Mathematische Annalen},
    VOLUME = {377},
      YEAR = {2020},
    NUMBER = {1-2},
     PAGES = {123--181},
      ISSN = {0025-5831,1432-1807},
   MRCLASS = {20J06 (20F28 55R40 57M07)},
  MRNUMBER = {4099630},
MRREVIEWER = {Mahender\ Singh},
       DOI = {10.1007/s00208-020-01955-0},
}

@misc{HV2022freefactors,
      title={The complex of free factors of a free group}, 
      author={Allen Hatcher and Karen Vogtmann},
      year={2022},
      eprint={2203.15602},
      archivePrefix={arXiv},
      primaryClass={math.GT},
howpublished = {Preprint},
      doi={10.48550/arXiv.2203.15602}, 
}

@article {HV1998,
    AUTHOR = {Hatcher, Allen and Vogtmann, Karen},
     TITLE = {The complex of free factors of a free group},
   JOURNAL = {Quart. J. Math. Oxford Ser. (2)},
  FJOURNAL = {The Quarterly Journal of Mathematics. Oxford. Second Series},
    VOLUME = {49},
      YEAR = {1998},
    NUMBER = {196},
     PAGES = {459--468},
      ISSN = {0033-5606},
   MRCLASS = {20E05 (20E42 57M07)},
  MRNUMBER = {1660045},
MRREVIEWER = {Michel Coornaert},
       DOI = {10.1093/qjmath/49.196.459},
}

@Article{Hat:Homologicalstabilityautomorphism,
  AUTHOR = {Hatcher, Allen},
     TITLE = {Homological stability for automorphism groups of free groups},
   JOURNAL = {Comment. Math. Helv.},
  FJOURNAL = {Commentarii Mathematici Helvetici},
    VOLUME = {70},
      YEAR = {1995},
    NUMBER = {1},
     PAGES = {39--62},
      ISSN = {0010-2571,1420-8946},
   MRCLASS = {20E05 (20F28 20J99 57M07)},
  MRNUMBER = {1314940},
MRREVIEWER = {Gilbert\ Levitt},
       DOI = {10.1007/BF02565999},
}

@article {BMPSW,
    AUTHOR = {Br\"uck, Benjamin and Miller, Jeremy and Patzt, Peter and
              Sroka, Robin J. and Wilson, Jennifer C. H.},
         TITLE = {On the codimension-two cohomology of {${\rm SL}_n(\mathbb{Z})$}},
   JOURNAL = {Adv. Math.},
  FJOURNAL = {Advances in Mathematics},
    VOLUME = {451},
      YEAR = {2024},
     PAGES = {Paper No. 109795},
      ISSN = {0001-8708,1090-2082},
   MRCLASS = {11F75 (55-08 55U10)},
  MRNUMBER = {4764368},
       DOI = {10.1016/j.aim.2024.109795},
}

@article{CP,
	author = {Church, T. and Putman, A.},
	doi = {10.2140/gt.2017.21.999},
	fjournal = {Geometry \& Topology},
	issn = {1465-3060},
	journal = {Geom. Topol.},
	mrclass = {11F75 (20E42 20G30 20J06 57Q05)},
	mrnumber = {3626596},
	mrreviewer = {Steffen Kionke},
	number = {2},
	pages = {999--1032},
	title = {The codimension-one cohomology of {${\rm SL}_n\mathbb{Z}$}},
	doi = {10.2140/gt.2017.21.999},
	volume = {21},
	year = {2017},
}

@article{CFP,
	author = {Church, T. and Farb, B. and Putman, A.},
	doi = {10.1353/ajm.2019.0036},
	fjournal = {American Journal of Mathematics},
	issn = {0002-9327},
	journal = {Amer. J. Math.},
	mrclass = {20E42 (20G10 51E24)},
	mrnumber = {4011804},
	number = {5},
	pages = {1375--1419},
	title = {Integrality in the {S}teinberg module and the top-dimensional cohomology of {${\rm SL}_n\mathcal{O}_K$}},
	doi = {10.1353/ajm.2019.0036},
	volume = {141},
	year = {2019},
	bdsk-url-1 = {https://doi-org.ezproxy.lib.purdue.edu/10.1353/ajm.2019.0036},
	bdsk-url-2 = {https://doi.org/10.1353/ajm.2019.0036}
}

@article{ROGNES1992813,
title = {A spectrum level rank filtration in algebraic K-theory},
journal = {Topology},
volume = {31},
number = {4},
pages = {813-845},
year = {1992},
issn = {0040-9383},
doi = {https://doi.org/10.1016/0040-9383(92)90012-7},
author = {John Rognes}
}

@article{miller2023,
      title={On rank filtrations of algebraic K-theory and Steinberg modules}, 
      author={Jeremy Miller and Peter Patzt and Jennifer C. H. Wilson},
      year={2025},
      journal={J. Eur. Math. Soc.},
      volume={published online first},
      pages={35},
      doi={10.4171/jems/1628},
}

@misc{HHS,
      title={A $\GL{n}{q}$ analogue of the partition lattice}, 
      author={Phil Hanlon and Patricia Hersh and John Shareshian},
      journal = {Unpublished},
      pages = {51},
      year={2009},
      url={https://pages.uoregon.edu/plhersh/hhs09.pdf}, 
}

@article {GKRW4,
    AUTHOR = {Galatius, S{\o }ren and Kupers, Alexander and Randal-Williams,
              Oscar},
     TITLE = {E{$\infty $}-cells and general linear groups of infinite
              fields},
   JOURNAL = {Duke Math. J.},
  FJOURNAL = {Duke Mathematical Journal},
    VOLUME = {174},
      YEAR = {2025},
    NUMBER = {14},
     PAGES = {2927-3046},
      ISSN = {0012-7094,1547-7398},
   MRCLASS = {19D50 (19D45 20G15 55P48)},
  MRNUMBER = {4974473},
       DOI = {10.1215/00127094-2025-0005},
}

@article {ConantVogtmann,
    AUTHOR = {Conant, James and Vogtmann, Karen},
     TITLE = {On a theorem of {K}ontsevich},
   JOURNAL = {Algebr. Geom. Topol.},
  FJOURNAL = {Algebraic \& Geometric Topology},
    VOLUME = {3},
      YEAR = {2003},
     PAGES = {1167--1224},
      ISSN = {1472-2747},
   MRCLASS = {18D50 (17B65 57M27)},
  MRNUMBER = {2026331},
MRREVIEWER = {Riccardo Longoni},
       DOI = {10.2140/agt.2003.3.1167},
}

@Inbook{Kontsevich,
author="Kontsevich, Maxim",
editor="Gelfand, Israel M.
and Corwin, Lawrence
and Lepowsky, James",
title="Formal (Non)-Commutative Symplectic Geometry",
bookTitle="The Gelfand Mathematical Seminars, 1990--1992",
year="1993",
publisher="Birkh{\"a}user Boston",
address="Boston, MA",
pages="173--187",
abstract="Some time ago B. Feigin, V. Retakh and I had tried to understand a remark of J. Stasheff [S1] on open string theory and higher associative algebras [S2]. Then I found a strange construction of cohomology classes of mapping class groups using as initial data any differential graded algebra with finite-dimensional cohomology and a kind of Poincar{\'e} duality.",
isbn="978-1-4612-0345-2",
doi="10.1007/978-1-4612-0345-2_11",
}

@article {Willwacher,
    AUTHOR = {Willwacher, Thomas},
     TITLE = {M. {K}ontsevich's graph complex and the
              {G}rothendieck-{T}eichm\"{u}ller {L}ie algebra},
   JOURNAL = {Invent. Math.},
  FJOURNAL = {Inventiones Mathematicae},
    VOLUME = {200},
      YEAR = {2015},
    NUMBER = {3},
     PAGES = {671--760},
      ISSN = {0020-9910},
   MRCLASS = {17B55 (18D50 53D55)},
  MRNUMBER = {3348138},
MRREVIEWER = {Rongmin Lu},
       DOI = {10.1007/s00222-014-0528-x},
}

@article {Qui78,
    AUTHOR = {Quillen, Daniel},
     TITLE = {Homotopy properties of the poset of nontrivial {$p$}-subgroups of a group},
   JOURNAL = {Adv. in Math.},
  FJOURNAL = {Advances in Mathematics},
    VOLUME = {28},
      YEAR = {1978},
    NUMBER = {2},
     PAGES = {101--128},
      ISSN = {0001-8708},
   MRCLASS = {20J99},
  MRNUMBER = {493916},
MRREVIEWER = {Kenneth S. Brown},
       DOI = {10.1016/0001-8708(78)90058-0},
}

@article{gadgilpandit,
    AUTHOR = {Gadgil, Siddhartha and Pandit, Suhas},
     TITLE = {Algebraic and geometric intersection numbers for free groups},
   JOURNAL = {Topology Appl.},
  FJOURNAL = {Topology and its Applications},
    VOLUME = {156},
      YEAR = {2009},
    NUMBER = {9},
     PAGES = {1615--1619},
      ISSN = {0166-8641,1879-3207},
   MRCLASS = {57M05 (20E05 57M07)},
  MRNUMBER = {2521697},
       DOI = {10.1016/j.topol.2008.12.039},
}

@article{Sadofschi,
AUTHOR = {Sadofschi Costa, Iv\'{a}n},
     TITLE = {The complex of partial bases of a free group},
   JOURNAL = {Bull. Lond. Math. Soc.},
  FJOURNAL = {Bulletin of the London Mathematical Society},
    VOLUME = {52},
      YEAR = {2020},
    NUMBER = {1},
     PAGES = {109--120},
      ISSN = {0024-6093,1469-2120},
   MRCLASS = {20E05 (20E36 20F65 57M07)},
  MRNUMBER = {4072036},
MRREVIEWER = {Matthew\ C. B. Zaremsky},
       DOI = {10.1112/blms.12311},
}

@article {RWW,
    AUTHOR = {Randal-Williams, Oscar and Wahl, Nathalie},
     TITLE = {Homological stability for automorphism groups},
   JOURNAL = {Adv. Math.},
  FJOURNAL = {Advances in Mathematics},
    VOLUME = {318},
      YEAR = {2017},
     PAGES = {534--626},
      ISSN = {0001-8708,1090-2082},
   MRCLASS = {20J05},
  MRNUMBER = {3689750},
MRREVIEWER = {Jason\ Philip Gino Semeraro},
       DOI = {10.1016/j.aim.2017.07.022},
}

@misc{PitermanShareshianWelker,
      title={Posets of decompositions in spherical buildings}, 
      author={Kevin Ivan Piterman and John Shareshian and Volkmar Welker},
      year={2025},
      eprint={2511.22531},
      archivePrefix={arXiv},
      primaryClass={math.AT},
      doi={10.48550/arXiv.2511.22531},
}

\end{document}